\newtheorem{theorem}{Theorem}[section]
\newtheorem{corollary}{Corollary}
\newtheorem{lemma}[theorem]{Lemma}
\newtheorem{proposition}{Proposition}
\theoremstyle{definition}
\newtheorem{definition}[theorem]{Definition}
\newtheorem{remark}{Remark}
\title[Topological Pressures for {\large$\epsilon$}-Stable  and  Stable Sets]
      {Topological Pressures for {\large$\epsilon$}-Stable  and  Stable Sets }
\author[Xianfeng Ma, Ercai Chen]{}
\subjclass{Primary: 37A35, 37B40}
 \keywords{topological pressure, stable sets, $\epsilon$-stable sets, weak mixing sets}
 \email{xianfengma@gmail.com}
 \email{ecchen@njnu.edu.cn}
\thanks{The first author is supported by the Fundamental Research Funds for
the Central Universities. The second and  first authors are  supported by the National Natural
Science Foundation of China (Grant No. 10971100).
 The second author is partially supported by National
  Basic Research Program of China (973 Program) (Grant No. 2007CB814800)}
\begin{document}
\maketitle

%% Enter the first author's name and address:
\centerline{\scshape Xianfeng Ma }
\medskip
{\footnotesize
 %% please put the address of the first author
   \centerline{Department of Mathematics}
   \centerline{ East China University of Science and Technology, Shanghai 200237, China}
} %% Do not forget to end the {\footnotesize by the sign }

\medskip

\centerline{\scshape Ercai Chen}
\medskip
{\footnotesize
 %% please put the address of the second author
 \centerline{School of Mathematical Science}
   \centerline{Nanjing Normal University, Nanjing 210097, China}
   \centerline{and}
   \centerline{Center of Nonlinear Science}
   \centerline{Nanjing University, Nanjing 210093, China}
} %

\bigskip

%% The name of the associate editor will be entered by an editorial staff
% \centerline{(Communicated by the associate editor name)}

\begin{abstract}
Topological pressures of the preimages of $\epsilon$-stable sets and some certain closed subsets of stable sets in positive entropy systems are investigated. It is showed that the topological pressure of any topological system can be calculated in terms of the topological pressure of the preimages of $\epsilon$-stable sets. For the constructed closed subset of the stable set or the unstable set of any point in a measure-theoretic `rather big' set of  a topological system with positive entropy, especially for the weakly mixing subset contained in the closure of  stable set and unstable set, it is proved that topological pressures of these subsets can be  no less than the measure-theoretic pressure.

\end{abstract}

\section{Introduction}\label{sec1}

Throughout this paper, by a \emph{topological dynamical system} $(X,T)$ (TDS for short) we mean a compact metric space $X$ with a homeomorphism  $T$ from $X$ to itself; the metric on $X$ is denoted by $d$. For $x\in X$ and $\epsilon >0$, the $\epsilon$-\emph{stable } set of $x$ under $T$ is the set of points whose forward orbit $\epsilon$-shadows that of $x$:
\begin{equation*}
W^s_{\epsilon}(x,T)=\{y\in X: d(T^nx,T^ny)\leq \epsilon \,\, \text{for all}\,\, n=1,2,\cdots\}.
\end{equation*}
The preimages of these sets can be nontrivial and hence disperse at a nonzero exponent rate. The dispersal rate function $h_{s}(T,x,\epsilon)$ was introduced in \cite{Fiebig2003}. The relationship between $h_{s}(T,x,\epsilon)$ and the topological entropy $h_{\text{top}}(T)$ was also investigated. It was proved that when $X$ has finite covering dimension, then for all $\epsilon>0$,
$$
\sup_{x\in X}h_{s}(T,x,\epsilon)=h_{\text{top}}(T).
$$
In \cite{Huang2008CMP}, the finite-dimensionality hypothesis turns out to be redundant. This equality is proved to be always true for any non-invertible TDS.

Given $f\in C(X,\mathbb{R})$, consider the \emph{topological pressure of the preimages of }$\epsilon$-\emph{stable set of }$x$:
$$
P(T,f,x,\epsilon)=\lim_{\delta\rightarrow 0}\limsup_{n\rightarrow +\infty}\frac{1}{n}\log P_n(T,f,\delta, T^{-n}W^s_{\epsilon}(x,T)),
$$
where
\begin{align*}
P_n(T,f,\delta, T^{-n}W^s_{\epsilon}(x,T))=\sup\{\sum_{x\in E}\exp f_n(x): E\,\,\text{is an }&(n,\delta) \text{-separated }\\
&\text{subset of } \,\,T^{-n}W^s_{\epsilon}(x,T) \},
\end{align*}
and $f_n(x)=\sum_{i=0}^{n-1}f\circ T^i(x)$.  We show that the topological pressure of any non-invertible TDS with positive metric entropy can be calculated in terms of the topological pressure of the preimages of $\epsilon$-stable sets. That is, for all $\epsilon>0$,
$$
\sup_{x\in X }P(T,f,x,\epsilon)=P(T,f),
$$
where $P(T,f)$ is the standard notion of the topological pressure. Note that for the null function $f$, this is the above equality about the entropy.

For $x\in X$, the \emph{stable set} $W^s(x,T)$ and the \emph{unstable set} $W^u(x,T)$ of $x$ are defined as
\begin{align*}
 &W^s(x,T)=\{ y\in X: \lim_{n\rightarrow +\infty}d(T^nx,T^ny)=0  \},\\
&W^u(x,T)=\{ y\in X: \lim_{n\rightarrow +\infty}d(T^{-n}x,T^{-n}y)=0  \}.
\end{align*}

For Anosov diffeomorphisms on a compact manifold, pairs belonging to the stable set are asymptotic under $T$ and tend to diverge under $T^{-1}$.
However, Blanchard \emph{et al.} \cite{Blanchard2002} showed that in most case, this phenomenon does not happen in a TDS with positive metric entropy.
N. Sumi \cite{Sumi2003} investigated the stable and unstable sets of a $C^2$ diffeomorphism of $C^{\infty}$ manifold with positive metric entropy. He showed that the closure of the stable set $W^s(x,T)$ of `many points' is a perfect $*$-chaotic set and the closure of the unstable set $W^u(x,T)$ contains a perfect $*$-chaotic set.
W. Huang \cite{Huang2008CMP} obtain a stronger result in the general non-invertible TDS with positive metric entropy. He proved that there exists a measure-theoretically `rather big' set such that the closure of the stable or unstable sets of points in the set contains a weakly mixing set. The Bowen entropies of these sets were also estimated. The lower bounded is the usual metric entropy $h_{\mu}(T)$ for the ergodic invariant measure $\mu$.

By introducing the topological pressure for the closed subset and using the excellent partition formed in Lemma 4 of \cite{Blanchard2002},  we show that for the constructed closed subsets of stable and unstable sets in \cite{Huang2008CMP}, the topological pressure of these sets can also be estimated. More precisely, we prove that if $\mu$ is an ergodic invariant measure of a TDS $(X,T)$ with $h_{\mu}(T)>0$, then for $\mu$-a.e. $x\in X$, the constructed closed subsets  $A(x)\subseteq W^s(x,T)$, $B(x)\subseteq W^u(x,T)$ and the weakly mixing subset $E(x)\subseteq \overline{W^s(x,T)}\cap \overline{W^u(x,T)}$ in \cite{Huang2008CMP} satisfies that
\renewcommand{\theenumi}{(\alph{enumi})}
\begin{enumerate}
\item $\lim_{n\rightarrow +\infty} \text{diam}(T^nA(x))=0 \,\, and \,\, P(T^{-1},f,A(x))\geq P_{\mu}(T,f)$;\label{01}
\item $\lim_{n\rightarrow +\infty} \text{diam}(T^{-n}B(x))=0 \,\, and \,\, P(T,f,B(x))\geq P_{\mu}(T,f)$;
\item $P(T,f,E(x))\geq P_{\mu}(T,f)$ and $P(T^{-1},f,E(x))\geq P_{\mu}(T,f)$,\label{02}
\end{enumerate}
where $P_{\mu}(T,f)$ is the measure theoretic pressure.

The paper is organized as follows. In Sec. \ref{sec2}, the topological pressure for the closed subset is introduced. Some related notions and results about entropy are also listed. In Sec. \ref{sec3}, we introduce the notion of topological pressure of the preimages  of $\epsilon$-stable set. Using the tool formed in \cite{Blanchard2002}, we show that the topological pressure of any TDS can be calculated  in terms of our introduced topological pressure of the preimages  of $\epsilon$-stable set. As a generalization of entropy point, the notion of the pressure point is also introduced there. In Sec. \ref{sec4}, we prove the above results \ref{01}-\ref{02}. In Sec. \ref{sec5}, we state and prove the former results for the non-invertible case.

\section{Preliminary}\label{sec2}
Let $(X,T)$ be a TDS and $\mathcal{B}_X$ be the $\sigma$-algebra of
all Borel subsets of $X$. Recall that a {\it cover} of $X$ is a
finite family of Borel subsets of $X$ whose union is $X$, and, a
{\it partition} of $X$ is a cover of $X$ whose elements are pairwise
disjoint. We denote the set of covers, partitions, and open covers,
of $X$, respectively, by $\mathcal{C}_X$, $\mathcal{P}_X$,
$\mathcal{C}_X^o$, respectively.
Given a partition $\alpha$ of $X$ and $x\in X$,
denote $\alpha(x)$ the atom of $\alpha$ containing $x$.
For given two covers $\mathcal{U}$,
$\mathcal{V}\in \mathcal{C}_X$, $\mathcal{U}$ is said to be {\it
finer} than $\mathcal{V}$ (denote by
$\mathcal{U}\succeq\mathcal{V}$) if each element of $\mathcal{U}$ is
contained in some element of $\mathcal{V}$. Let
$\mathcal{U}\vee\mathcal{V}=\{U\cap V: U\in \mathcal{U}, V\in
\mathcal{V}\}$. Given integers $M, N$ with $0\leq M\leq N$ and
$\mathcal{U}\in \mathcal{C}_X$, we denote
$\bigvee_{n=M}^NT^{-n}\mathcal{U}$ by $\mathcal{U}_M^N$.

Given
$\mathcal{U}\in \mathcal{C}_X$ and $K\subset X$, put
$N(\mathcal{U}, K)=\min \{{\rm the\,\, cardinality\,\,
of}\,\,\mathcal{F}: \mathcal{F}\subset\mathcal{U}, \bigcup_{F\in
\mathcal{F}}F\supset K\}$ and $H(\mathcal{U}, K)=\log N(\mathcal{U}, K)$. Then the topological entropy of $\mathcal{U}$ with respect to $T$ for the compact subset $K$ is
\begin{equation*}
 h_{\text{top}}(T,\mathcal{U}, K)=\lim_{n\rightarrow
\infty}\frac{1}{n}H(\mathcal{U}_0^{n-1}, K)=\inf_{n\geq
1}\frac{1}{n}H(\mathcal{U}_0^{n-1}, K).
 \end{equation*}
The topological entropy of $T$ for the compact subset $K$ is defined by $h_{\text{top}}(T,K)=\sup_{\mathcal{U}\in \mathcal{C}^o_X}h_{\text{top}}(T,\mathcal{U}, K)$; and the topological entropy of $T$ is defined by $h_{\text{top}}(T)=\sup_Kh_{\text{top}}(T,K)$.

Let $(X,T)$ be a TDS, $K$ be a closed subset of $X$, $\mathcal{U}\in \mathcal{C}_X^o$ and $f\in C(X,\mathbb{R})$, where $C(X,\mathbb{R})$ be the Banach space of all continuous,
real-valued functions on $X$ endowed with the supremum norm. We denote
\begin{equation}
P_n(T,f,\mathcal{U},K)=\inf\{\sum_{V\in \mathcal{V}}\sup_{x\in V\cap K}\exp f_n(x): \mathcal{V}\in \mathcal{C}_X \,\, \text{and}\,\, \mathcal{V}\succeq \mathcal{U}_0^{n-1}\},
\end{equation}
where $f_n(x)=\sum_{j=0}^{n-1}f(T^jx)$. For $V
\cap K=\emptyset$, we let $f_n(x)=-\infty$ for each $n$.
Then the above definition is well defined. It is clear that if $f$
is the null function, then
$P_n(T,0,\mathcal{U},K)=N(\mathcal{U}_0^{n-1},K)$.

For $\mathcal{V}\in \mathcal{C}_X$, we let $\alpha$ be the Borel
partition generated by $\mathcal{V}$ and denote
\begin{equation*}
\mathcal{P}^*(\mathcal{V})=\{\beta\in \mathcal{P}_X: \beta\succeq
\mathcal{V} \,\, {\rm and\,\, each \,\, atom \,\, of }\,\,\beta \,\,
{\rm is \,\, the\,\, union\,\, of \,\, some\,\, atoms\,\, of \,\,
}\alpha \}.
\end{equation*}

\begin{lemma}[Lemma 2.1 \cite{Xianfeng2009}] \label{sc2.1}
Let $M$ be a compact subset of $X$, $f\in C(X,\mathbb{R})$ and
$\mathcal{V}\in \mathcal{C}_X$. Then
$$
\inf_{\beta\in \mathcal{C}_X, \beta\succeq \mathcal{V}}\sum_{B\in
\beta}\sup_{x\in B\cap M}f(x)=\min\{\sum_{B\in \beta}\sup_{x\in
B\cap M}f(x): \beta\in \mathcal{P}^*(\mathcal{V}) \}.
$$
\end{lemma}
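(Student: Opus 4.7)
My plan is to prove the two inequalities separately. The direction $\geq$ is immediate from the inclusion $\mathcal{P}^*(\mathcal{V})\subset\{\beta\in\mathcal{C}_X:\beta\succeq\mathcal{V}\}$: taking the infimum over a larger family of covers can only be smaller. Since $\mathcal{V}$ is finite, the partition $\alpha$ it generates has finitely many atoms, so $\mathcal{P}^*(\mathcal{V})$ is itself finite and the right-hand infimum is in fact a minimum.

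For the reverse inequality, given an arbitrary cover $\beta=\{B_1,\ldots,B_k\}$ with $\beta\succeq\mathcal{V}$, I will construct a partition $\gamma\in\mathcal{P}^*(\mathcal{V})$ whose sum is no larger than $\sum_{B\in\beta}\sup_{x\in B\cap M}f(x)$. Fix $V_i\in\mathcal{V}$ with $B_i\subseteq V_i$. The construction assigns each atom $A$ of $\alpha$ to a single index via a map $\pi$: I take $\pi(A)$ to be an index $i$ realising the equality $\sup_{x\in A\cap M}f(x)=\sup_{x\in A\cap B_i\cap M}f(x)$, which exists because $\beta$ covers $X$, so $A\cap M=\bigcup_i(A\cap B_i\cap M)$ and the sup passes to a maximum over finitely many indices. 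In the degenerate case $A\cap M=\emptyset$ I instead pick any $i$ with $A\cap B_i\neq\emptyset$, which exists since $\beta$ still covers $A$.

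Setting $C_i=\bigcup\{A\in\alpha:\pi(A)=i\}$ yields a partition $\gamma=\{C_i:C_i\neq\emptyset\}$ of $X$ whose atoms are unions of atoms of $\alpha$. To verify $\gamma\succeq\mathcal{V}$, the point is that whenever $\pi(A)=i$ we have $A\cap B_i\neq\emptyset$ (in the nontrivial case because $\sup_{A\cap B_i\cap M}f>-\infty$, in the trivial case by the explicit choice), and since $A$ is an atom of the partition generated by $\mathcal{V}$ while $B_i\subseteq V_i$, it follows that $A\subseteq V_i$; hence $C_i\subseteq V_i$. The comparison of sums is then
$$\sup_{x\in C_i\cap M}f(x)=\max_{A:\pi(A)=i}\sup_{x\in A\cap B_i\cap M}f(x)\leq\sup_{x\in B_i\cap M}f(x),$$
and summing over $i$ finishes the proof.

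The main obstacle is the design of the assignment $\pi$: it has to do double duty, both witnessing the sup of $f$ on $A\cap M$ through the chosen $B_{\pi(A)}$ (so that the resulting partition beats $\beta$ termwise) and forcing $A\subseteq V_{\pi(A)}$ (so that $\gamma\in\mathcal{P}^*(\mathcal{V})$). The key observation that makes the construction go through is that these two requirements are automatically compatible, because any $B_i$ witnessing the sup on $A\cap M$ must intersect $A$, and an atom of $\alpha$ that meets $V_i$ must be contained in $V_i$.
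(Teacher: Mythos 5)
Your overall strategy is the right one: you get $\inf\leq\min$ from the inclusion $\mathcal{P}^*(\mathcal{V})\subseteq\{\beta\in\mathcal{C}_X:\beta\succeq\mathcal{V}\}$ (together with finiteness of $\mathcal{P}^*(\mathcal{V})$), and for the reverse you build from an arbitrary $\beta\succeq\mathcal{V}$ a competitor $\gamma\in\mathcal{P}^*(\mathcal{V})$ by grouping the $\alpha$-atoms according to an assignment $\pi$. The construction of $\pi$, the check that $A\cap B_{\pi(A)}\neq\emptyset$, and the deduction that this forces $A\subseteq V_{\pi(A)}$ and hence $C_i\subseteq V_i$ are all correct.

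There is a gap at the final summation. You prove $\sup_{C_i\cap M}f\leq\sup_{B_i\cap M}f$ only for those indices $i$ with $C_i\neq\emptyset$, and nothing forces $\pi$ to be surjective onto $\{1,\ldots,k\}$ (for example, if $B_1\subseteq B_2$ and $B_2$ always realises the maximum, then $C_1=\emptyset$). So $\gamma$ can have strictly fewer members than $\beta$, and passing from $\sum_{i:C_i\neq\emptyset}\sup_{B_i\cap M}f$ to the full sum $\sum_{i=1}^k\sup_{B_i\cap M}f$ requires the discarded terms to be nonnegative. This is not automatic for arbitrary $f\in C(X,\mathbb{R})$, and in fact the lemma as literally stated fails when $f$ is negative: take $X=\{a,b\}$, $M=X$, $\mathcal{V}=\{X\}$ and $f\equiv -1$; then $\alpha=\{X\}$, $\mathcal{P}^*(\mathcal{V})=\{\{X\}\}$, so the right-hand side equals $-1$, while the cover $\{\{a\},\{b\}\}\succeq\mathcal{V}$ makes the left-hand infimum at most $-2$. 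The hypothesis that repairs both the lemma and your argument is $f\geq 0$ together with the convention $\sup_{\emptyset}f=0$; that is exactly the regime in which the lemma is invoked throughout the paper (always with $f$ of the form $\exp(\cdot)$), but you should state it and use it explicitly when discarding the indices with $C_i=\emptyset$.
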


A real-valued function $f$ defined on a compact metric space $Z$ is
called {\it upper semi-continuous }(for short u.s.c.) if one of the
following equivalent conditions holds:
\begin{enumerate}
\item $\limsup_{z'\rightarrow z}f(z')\leq f(z)$ for each $z\in Z$;
\item for each $r\in \mathbb{R}$, the set $\{z\in Z:f(z)\geq r\}$ is
closed.\label{usc2}
\end{enumerate}
By \ref{usc2}, the infimum of any family of u.s.c. functions is
again a u.s.c. one; both the sum and supremum of finitely many
u.s.c. functions are u.s.c. ones.

\begin{lemma}\label{lem3.4}
Let $(X,T)$ be a TDS, $G:x\rightarrow \mathcal{K}(X)$ be a upper semi-continuous  set-valued  mapping, where $\mathcal{K}(X)$ is the collection of all nonempty closed subset of $X$ endowed with the Hausdorff metric, $\mathcal{U}\in\mathcal{C}^o_X$ and $f\in C(X, \mathbb{R})$. Then $F:x\rightarrow \inf\{\sum_{V\in \mathcal{V}}\sup_{y\in V\cap G(x)}f(y):\mathcal{V}\in \mathcal{C}_X \,\, \text{and}\,\, \mathcal{V}\succeq \mathcal{U} \}$ is a u.s.c. function.
\end{lemma}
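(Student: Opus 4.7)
The plan is to apply Lemma~2.1 to rewrite $F$ as a finite pointwise minimum, and then to exhibit at each point $x_0$ a realizer of that minimum which is upper semi-continuous at $x_0$.

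Specializing Lemma~2.1 to $\mathcal{V}=\mathcal{U}$ and $M=G(x)$ gives
\[
F(x)=\min_{\beta\in\mathcal{P}^*(\mathcal{U})}F_\beta(x),\qquad F_\beta(x):=\sum_{B\in\beta}\sup_{y\in B\cap G(x)}f(y).
\]
The family $\mathcal{P}^*(\mathcal{U})$ is finite, since its members are partitions whose atoms are unions of atoms of the (finite) Borel partition generated by $\mathcal{U}$. Hence $F$ is a pointwise minimum of finitely many real-valued functions of $x$, and it suffices to control it on the level of individual $F_\beta$.

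The natural attempt to show each $F_\beta$ is u.s.c.\ proceeds as follows: for $x_n\to x_0$, choose $y_n^B\in B\cap G(x_n)$ with $f(y_n^B)\ge\sup_{B\cap G(x_n)}f-1/n$, extract a subsequence so that $y_n^B\to y_\infty^B$ for every $B$ by compactness of $X$, and use u.s.c.\ of $G$ together with continuity of $f$. However, because atoms $B$ of $\beta$ need not be closed, the limit $y_\infty^B$ belongs to $\overline B\cap G(x_0)$ but may fail to lie in $B$ itself, so $f(y_\infty^B)$ is not dominated by $\sup_{B\cap G(x_0)}f$. To bypass this, I claim that for each $x_0$ one can select a minimizer $\beta^*\in\mathcal{P}^*(\mathcal{U})$ with $F(x_0)=F_{\beta^*}(x_0)$ for which no such boundary escape occurs, i.e.\ $x\mapsto\sup_{B\cap G(x)}f$ is u.s.c.\ at $x_0$ for every $B\in\beta^*$. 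Then
\[
\limsup_{x\to x_0}F(x)\le\limsup_{x\to x_0}F_{\beta^*}(x)\le F_{\beta^*}(x_0)=F(x_0),
\]
yielding u.s.c.\ of $F$ at $x_0$.

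The technical heart, and the main obstacle, is the existence of such a $\beta^*$. The openness of $\mathcal{U}$ is essential here: each atom $A_I=\bigcap_{i\in I}U_i\cap\bigcap_{j\notin I}U_j^c$ of the Borel partition generated by $\mathcal{U}$ is locally closed, and in constructing an element of $\mathcal{P}^*(\mathcal{U})$ one may freely allocate $A_I$ to any one of the $U_i$ with $i\in I$, producing distinct partitions in $\mathcal{P}^*(\mathcal{U})$. Using this combinatorial flexibility, one positions the atoms of $\beta^*$ so that the suprema defining $F_{\beta^*}(x_0)$ are attained at points that are interior to their respective atoms with respect to $\mathcal{U}$ (away from boundaries $\partial U_j$ shared with other $U_i$'s), which is precisely what prevents escape of maximizing subsequences. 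A finite case analysis over the possible allocations, together with the observation that reallocating an atom $A_I$ from one atom of $\beta$ to another only controllably affects the realized value of $F_\beta(x_0)$, yields the desired minimizing $\beta^*$ and completes the argument.
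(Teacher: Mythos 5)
Your approach departs from the paper's. The paper does not invoke Lemma~2.1 at all; it works directly with the map $F_V : x \mapsto \sup_{y\in V\cap G(x)}f(y)$ for $V$ ranging over the elements of an arbitrary cover $\mathcal{V}\succeq\mathcal{U}$, argues that each $F_V$ is u.s.c.\ by combining uniform continuity of $f$ with Hausdorff-convergence of $G(x_n)$ to $G(x_0)$, and then concludes using the facts that a finite sum of u.s.c.\ functions is u.s.c.\ and an infimum of an arbitrary family of u.s.c.\ functions is u.s.c. Your reduction via Lemma~2.1 to a finite minimum over $\mathcal{P}^*(\mathcal{U})$ is a legitimate and potentially cleaner starting point, but the two proofs diverge from there.

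The gap in your argument is the existence of the special minimizer $\beta^*$. You correctly identify the genuine difficulty: atoms of $\beta\in\mathcal{P}^*(\mathcal{U})$ need not be closed, so a maximizing sequence $y_n\in B\cap G(x_n)$ can escape to $\overline{B}\setminus B$ in the limit, and therefore $F_\beta$ need not be u.s.c.\ individually. (The paper's own argument faces the same issue --- the claimed inequality $\bigl|\sup_{V\cap G(x_n)}f - \sup_{V\cap G(x_0)}f\bigr|\leq\epsilon$ does not follow from $H_d(G(x_n),G(x_0))<\delta$ when $V$ is not closed, since the $\delta$-close companion point of a maximizer in $V\cap G(x_n)$ need not itself lie in $V$.) But your proposed remedy --- that for each $x_0$ one can pick a realizing $\beta^*\in\mathcal{P}^*(\mathcal{U})$ whose atoms attain their suprema at points ``interior'' with respect to $\mathcal{U}$, and that this prevents boundary escape for all nearby $x$ --- is asserted, not proved. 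Nothing in your sketch establishes that such a $\beta^*$ exists: the supremum of $f$ on $B\cap G(x_0)$ may genuinely be attained only on $\partial U_j\cap G(x_0)$ for every permissible allocation of the atom $B$, and reallocating a Borel atom $A_I$ to a different member of $\mathcal{U}$ changes $F_\beta(x_0)$ in ways your ``finite case analysis'' does not track. Until the existence of $\beta^*$ with the stated u.s.c.-at-$x_0$ property is actually demonstrated, the chain $\limsup_{x\to x_0}F(x)\leq\limsup_{x\to x_0}F_{\beta^*}(x)\leq F_{\beta^*}(x_0)=F(x_0)$ is unsupported, and the proof is incomplete.
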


\begin{proof}
For each $V\in\mathcal{V}\in\mathcal{C}_X$, let $F_V:x\rightarrow \sup_{y\in V\cap G(x)}f(y)$.
For each $r\in \mathbb{R}$, let $\{x_n:n\in \mathbb{N}\}\subseteq \{x:\sup_{y\in V\cap G(x)}f(y)\geq r\}$ be the sequence of points which convergence to the point $x_0$, we show that $x_0\in \{x:\sup_{y\in V\cap G(x)}f(y)\geq r\}$.

Given $\epsilon >0$, since $f\in C(X, \mathbb{R})$, there exists $\delta >0$ such that
\begin{equation}\label{eq3.1}
d(x_1,x_2)<\delta\Rightarrow \mid f(x_1)-f(x_2)  \mid  <\epsilon.
\end{equation}
The set $\{K\in \mathcal{K}(X): H_d(G(x_0),K)<\delta\}$ is an open neighborhood of $G(x_0)$ in $\mathcal{K}(X)$, where $H_d(K_1,K_2)=\inf\{\delta>0: \text{ for all }\,\, x_1\in K_1, x_2\in K_2, d(x_1,K_2)<\delta, d(x_2,K_1)<\delta\}$ is the Hausdorff metric. Since $G$ is a upper semi-continuous set-valued mapping, then for all large enough $n$, $H_d(G(x_n),G(x_0))<\delta$. From \eqref{eq3.1}, we get
\begin{equation*}
\big\vert \sup_{y\in V\cap G(x_n)}f(y)- \sup_{y\in V\cap G(x_0)}f(y) \big\vert \leq \epsilon.
\end{equation*}
By the arbitrary of $\epsilon$, $\sup_{y\in V\cap G(x_0)}f(y)\geq r$. Then $x_0\in \{x:\sup_{y\in V\cap G(x)}f(y)\geq r\}$. That is, $F_V$ is a upper semi-continuous function.

Since the sum of finitely many u.s.c. functions and the infimum of any family of u.s.c. functions are both  u.s.c. functions, we get
\begin{equation*}
F:x\rightarrow \inf\{\sum_{V\in \mathcal{V}}\sup_{y\in V\cap G(x)}f(y):\mathcal{V}\in \mathcal{C}_X \,\, \text{and}\,\, \mathcal{V}\succeq \mathcal{U} \}
\end{equation*}
is a u.s.c. function.
\end{proof}

With a similar argument of Lemma \ref{lem3.4}, we also have the following result.

\begin{lemma}\label{lem3.41}
Let $(X,T)$ be a TDS,   $\mathcal{U}\in\mathcal{C}^o_X$ and $f\in C(X, \mathbb{R})$. Then the function $F: K\rightarrow \inf\{\sum_{V\in \mathcal{V}}\sup_{y\in V\cap K}f(y):\mathcal{V}\in \mathcal{C}_X \,\, \text{and}\,\, \mathcal{V}\succeq \mathcal{U} \}$ is  u.s.c. from $\mathcal{K}(X)$ to $\mathbb{R}$, where $\mathcal{K}(X)$ is the collection of all nonempty closed subset of $X$ endowed with the Hausdorff metric.
\end{lemma}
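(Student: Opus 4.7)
The plan is to mirror the argument already given for Lemma \ref{lem3.4}, replacing the role of the upper semi-continuous set-valued map $G$ by the identity on $\mathcal{K}(X)$, which plays the part of a trivially ``upper semi-continuous'' selector. The technical heart is the same: uniform continuity of $f$ controls how $\sup_{y\in V\cap K}f(y)$ varies when $K$ is perturbed in the Hausdorff metric, and then the usual closure properties of u.s.c.\ functions assemble the finite sum and the infimum.

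Concretely, I would first fix a cover $\mathcal{V}\in\mathcal{C}_X$ with $\mathcal{V}\succeq \mathcal{U}$ and, for each $V\in\mathcal{V}$, introduce the single-cell functional
\begin{equation*}
F_V:\mathcal{K}(X)\to\mathbb{R}\cup\{-\infty\},\qquad F_V(K)=\sup_{y\in V\cap K}f(y),
\end{equation*}
with the convention $F_V(K)=-\infty$ when $V\cap K=\emptyset$. The key step is to show that each $F_V$ is u.s.c.\ on $\mathcal{K}(X)$. To do this, fix $r\in\mathbb{R}$ and take $K_n\to K_0$ in the Hausdorff metric with $F_V(K_n)\geq r$ for all $n$. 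Given $\epsilon>0$, uniform continuity of $f$ on the compact space $X$ provides $\delta>0$ with $d(x_1,x_2)<\delta\Rightarrow |f(x_1)-f(x_2)|<\epsilon$. Since $H_d(K_n,K_0)<\delta$ for all large $n$, the same comparison used in the proof of Lemma \ref{lem3.4} yields
\begin{equation*}
\bigl|\sup_{y\in V\cap K_n}f(y)-\sup_{y\in V\cap K_0}f(y)\bigr|\leq \epsilon,
\end{equation*}
so $F_V(K_0)\geq r-\epsilon$, and letting $\epsilon\downarrow 0$ gives $F_V(K_0)\geq r$. Thus $\{K:F_V(K)\geq r\}$ is closed, hence $F_V$ is u.s.c.

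Once each $F_V$ is u.s.c., the finite sum $\sum_{V\in\mathcal{V}}F_V$ is u.s.c., as noted in the excerpt. Taking the infimum over all $\mathcal{V}\in\mathcal{C}_X$ with $\mathcal{V}\succeq \mathcal{U}$ preserves upper semi-continuity (again by the remark in the excerpt that the infimum of any family of u.s.c.\ functions is u.s.c.), which is exactly the assertion $F(K)=\inf_{\mathcal{V}\succeq \mathcal{U}}\sum_{V\in\mathcal{V}}\sup_{y\in V\cap K}f(y)$ is u.s.c.\ from $\mathcal{K}(X)$ to $\mathbb{R}$.

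The main obstacle, and the only place the argument needs care, is the comparison of the two suprema above: a point $y_n\in V\cap K_n$ that nearly attains $F_V(K_n)$ has a Hausdorff-nearby partner $z_n\in K_0$, but that partner is not a priori in $V$. This is the same minor subtlety that Lemma \ref{lem3.4} handles by coupling the uniform continuity estimate with the symmetric Hausdorff bound, and I would handle it identically here. Everything else is formal closure of the u.s.c.\ class under finite sums and arbitrary infima, so the entire proof is essentially a transcription of the earlier one with $G(x)$ replaced by $K$.
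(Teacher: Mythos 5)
Your proposal mirrors the paper's proof exactly: reduce to the single-cell functionals $F_V$, argue each is u.s.c.\ via uniform continuity of $f$ together with the Hausdorff bound, then conclude by closure of the u.s.c.\ class under finite sums and infima. The difficulty is that the ``main obstacle'' you flag in your last paragraph is not a minor subtlety that the proof of Lemma \ref{lem3.4} quietly handles; it is a genuine gap the paper's argument also leaves open, and asserting that you would ``handle it identically'' does not repair it, because the proof of Lemma \ref{lem3.4} contains the identical unjustified step.

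The asserted inequality $\bigl|\sup_{y\in V\cap K_n}f(y)-\sup_{y\in V\cap K_0}f(y)\bigr|\le\epsilon$ when $H_d(K_n,K_0)<\delta$ is simply false when $V$ is not closed, exactly for the reason you point out: the Hausdorff-nearby partner of a near-maximizer in $V\cap K_n$ need not lie in $V$. Concretely, take $X=[0,1]$, $V=[0,1/2)$, $K_n=\{1/2-1/n\}$, $K_0=\{1/2\}$, and $f>0$ continuous. Then $H_d(K_n,K_0)=1/n\to 0$ and $F_V(K_n)=f(1/2-1/n)\to f(1/2)>0$, while $V\cap K_0=\emptyset$ gives $F_V(K_0)=0$ (or $-\infty$, depending on the convention for the empty supremum); so $\limsup_n F_V(K_n)>F_V(K_0)$ and $F_V$ is not u.s.c. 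The cells $V$ occurring in covers $\mathcal{V}\succeq\mathcal{U}$ (in particular the atoms of partitions in $\mathcal{P}^*(\mathcal{U})$) are arbitrary Borel sets and typically not closed, so the single-cell reduction does not go through as written. A correct argument would need input beyond uniform continuity of $f$ and the raw Hausdorff estimate --- most plausibly the openness of $\mathcal{U}$, in the spirit of the classical proof that $K\mapsto N(\mathcal{U},K)$ is u.s.c.\ for an open cover $\mathcal{U}$, where one uses that the union of a finite subcover of $K_0$ is an open set that eventually contains $K_n$ as well.
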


\begin{proof}
It suffice to prove that for each $V\in\mathcal{V}\in\mathcal{C}_X$,  $F_V:K\rightarrow \sup_{y\in V\cap K}f(y)$ is u.s.c..

For each $r\in \mathbb{R}$, let $\{K_n:n\in \mathbb{N}\}\subseteq \{K:\sup_{y\in V\cap K}f(y)\geq r\}$ be the sequence of subsets which convergence to the set $K_0$, we show that $K_0\in \{K:\sup_{y\in V\cap K}f(y)\geq r\}$.

Given $\epsilon >0$, since $f\in C(X, \mathbb{R})$, there exists $\delta >0$ such that
\begin{equation*}
d(x_1,x_2)<\delta\Rightarrow \mid f(x_1)-f(x_2)  \mid  <\epsilon.
\end{equation*}

For each $K_0\in \mathcal{K}(X)$, the set $\{K\in \mathcal{K}(X): H_d(K,K_0)<\delta\}$ is an open neighborhood of $K$ in $\mathcal{K}(X)$. Then by the definition of the Hausdorff metric, it is easy to know that $\big\vert \sup_{y\in V\cap K}f(y)- \sup_{y\in V\cap K_0}f(y) \big\vert \leq \epsilon.$

By the arbitrary of $\epsilon$, $\sup_{y\in V\cap K_0}f(y)\geq r$. Then $K_0\in \{K:\sup_{y\in V\cap K}f(y)\geq r\}$. That is, $F_V$ is a u.s.c. function.
\end{proof}

\begin{lemma}
Let $(X,T)$ be a TDS, $K$ be a closed subset of $X$, $\mathcal{U}\in\mathcal{C}^o_X$ and $f\in C(X, \mathbb{R})$. Then
\begin{equation*}
P(T,f,\mathcal{U},K)=\lim_{n\rightarrow +\infty}\frac{1}{n}\log P_n(T,f,\mathcal{U},K)
\end{equation*}
exists.
\end{lemma}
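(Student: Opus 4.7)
The plan is to apply Fekete's subadditive lemma to the sequence $a_n := \log P_n(T,f,\mathcal{U},K)$. For this it suffices to establish the sub-multiplicative inequality $P_{n+m}(T,f,\mathcal{U},K) \leq P_n(T,f,\mathcal{U},K) \cdot P_m(T,f,\mathcal{U},K)$, which translates into $a_{n+m} \leq a_n + a_m$; Fekete's lemma then gives $\lim_{n\to\infty} a_n/n = \inf_n a_n/n$.

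The natural route to the sub-multiplicative bound is as follows. Given $\epsilon > 0$, I select near-optimal covers $\mathcal{V} \succeq \mathcal{U}_0^{n-1}$ and $\mathcal{W} \succeq \mathcal{U}_0^{m-1}$ realizing the infima defining $P_n$ and $P_m$ to within $\epsilon$, and form the join $\mathcal{V}' := \mathcal{V} \vee T^{-n}\mathcal{W}$. Since $\mathcal{U}_0^{n-1} \vee T^{-n}\mathcal{U}_0^{m-1} = \mathcal{U}_0^{n+m-1}$, the cover $\mathcal{V}'$ refines $\mathcal{U}_0^{n+m-1}$ and is thus admissible for $P_{n+m}$. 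For each atom $V \cap T^{-n}W$ meeting $K$, the cocycle identity $f_{n+m}(x) = f_n(x) + f_m(T^n x)$ combined with elementary monotonicity of $\sup$ gives
$$\sup_{x\in V\cap T^{-n}W\cap K}\exp f_{n+m}(x)\leq \Bigl(\sup_{V\cap K}\exp f_n\Bigr)\Bigl(\sup_{W\cap T^n K}\exp f_m\Bigr),$$
and summing atom-by-atom and factoring the double sum into a product yields
$$P_{n+m}(T,f,\mathcal{U},K)\leq P_n(T,f,\mathcal{U},K)\cdot P_m(T,f,\mathcal{U},T^n K),$$
up to $O(\epsilon)$, which vanishes as $\epsilon\downarrow 0$.

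The main obstacle is the appearance of $T^n K$ rather than $K$ itself in the second factor. In the case where $K$ is forward $T$-invariant (so $T^n K \subseteq K$), the trivial monotonicity of $\sup$ over smaller sets gives $P_m(T,f,\mathcal{U},T^n K) \leq P_m(T,f,\mathcal{U},K)$ and hence the honest sub-multiplicative estimate to which Fekete applies. For a general closed $K$ one instead uses the uniform bound $P_m(T,f,\mathcal{U},T^n K) \leq P_m(T,f,\mathcal{U},X)$; since the standard topological pressure $P_m(T,f,\mathcal{U},X)$ is itself sub-multiplicative in $m$, a routine variant of Fekete's lemma for approximately sub-additive sequences still delivers the existence of $\lim_n a_n/n$, completing the proof.
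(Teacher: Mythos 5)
Your derivation of the key inequality $P_{n+m}(T,f,\mathcal{U},K)\le P_n(T,f,\mathcal{U},K)\cdot P_m(T,f,\mathcal{U},T^nK)$ coincides with the paper's computation, and your observation that for forward $T$-invariant $K$ this becomes honest submultiplicativity, to which Fekete's lemma applies, is correct. The last paragraph, for a general closed $K$, has a real gap. Bounding $P_m(T,f,\mathcal{U},T^nK)$ by $P_m(T,f,\mathcal{U},X)$ gives only the one-sided estimate $a_{n+m}\le a_n+b_m$ with $b_m:=\log P_m(T,f,\mathcal{U},X)$ a fixed subadditive sequence. There is no ``routine variant of Fekete'' that extracts a limit from this alone: the hypothesis bounds the growth of $a_n$ from above but imposes no lower control, so $a_n/n$ may oscillate. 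Concretely, take $b_m=m$; then $a_{n+m}\le a_n+m$ is equivalent to $n\mapsto a_n-n$ being non-increasing, and if one puts $a_n=n+g(n)$ with $g(n)=-n$ for $n\in[4^j,2\cdot4^j]$ and $g(n)=-2\cdot4^j$ for $n\in(2\cdot4^j,4^{j+1})$, then $g$ is non-increasing, so the estimate holds for all $n,m$, while $a_n/n$ oscillates between $0$ and roughly $1/2$. So your finishing step does not deliver existence of the limit for a general closed $K$.

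The paper's finishing step is genuinely different. Rather than discarding the set-shift, it retains the inequality $F_{n+m}(K)\le F_n(K)+F_m(T^nK)$ as a subadditive cocycle for the induced map $K\mapsto TK$ on the hyperspace $\mathcal{K}(X)$, uses Lemma~\ref{lem3.41} to obtain the requisite measurability of $K\mapsto F_n(K)$, and then invokes Kingman's subadditive ergodic theorem rather than Fekete's lemma. In other words, the term $T^nK$ is treated as dynamics to which a cocycle theorem applies, not as a nuisance to be absorbed into a worse constant. If you want a Fekete-style proof you should restrict to the invariant case as you did, or you must exploit the structure of $T^nK$ along the lines of the paper; the purely sequence-level reduction you propose is not sufficient.
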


\begin{proof}
 For any $n,m\in\mathbb{N}$, $\mathcal{V}_1\succeq
\mathcal{U}_0^{n-1}$, $\mathcal{V}_2\succeq \mathcal{U}_0^{m-1}$, we
have $\mathcal{V}_1\vee T^{-n}\mathcal{V}_2\succeq
\mathcal{U}_0^{n+m-1}$. It follows that
\begin{align*}
P_{n+m}(T,f,\mathcal{U},K)&\leq \sum_{V_1\in
\mathcal{V}_1}\sum_{V_1\in \mathcal{V}_2}\sup_{x\in V_1\cap
T^{-n}V_2\cap K}\exp f_{n+m}(x)\\
&=\sum_{V_1\in \mathcal{V}_1}\sum_{V_2\in \mathcal{V}_2}\sup_{x\in
V_1\cap T^{-n}V_2\cap K}\exp(f_n(x)+f_m(T^nx))\\
&\leq \sum_{V_1\in \mathcal{V}_1}\sum_{V_2\in \mathcal{V}_2}
\big(\sup_{x\in V_1 K}\exp f_n(x)\cdot\sup_{z\in
V_2\cap
T^nK}\exp f_m(z)\big)\\
&=\big(\sum_{V_1\in \mathcal{V}_1}\sup_{x\in V_1\cap
K}\exp f_n(x)  \big)\big(\sum_{V_2\in
\mathcal{V}_2}\sup_{z\in V_2\cap T^nK}\exp f_m(z)  \big).
\end{align*}
Since $\mathcal{V}_i,i=1,2$ is arbitrary, then
$P_{n+m}(T,f,\mathcal{U},K)\leq P_n(T,f,\mathcal{U},K)\cdot P_m(T,f,\mathcal{U},T^nK)$, and so $\log
P_n(T,f,\mathcal{U},K) $ is subadditive. Since $P_n(T,f,\mathcal{U},K)$ is measurable from Lemma \ref{lem3.41}, then by Kingman's subadditive ergodic theorem (See \cite{Walters}), we
complete the proof.
\end{proof}

If $K=X$, then $P(T,f,\mathcal{U},X)=P(T,f,\mathcal{U})$, which is the local topological pressure defined by Huang \emph{et al.} \cite{huang2007}. Clearly, $P(T,0,\mathcal{U},K)=h_{\text{top}}(T,\mathcal{U},K)$.

Let $(X,T)$ be a TDS. Denote by $ \mathcal{M}(X) $ the set of all
Borel, probability measures on $X$, $\mathcal{M}(X,T)$ the set of
$T$-invariant measures, and $\mathcal{M}^e(X,T)$ the set of ergodic
measures. Then
$\mathcal{M}^e(X,T)\subset\mathcal{M}(X,T)\subset\mathcal{M}(X)$,
and $ \mathcal{M}(X), \mathcal{M}(X,T)$ are convex, compact metric
spaces endowed with the weak*-topology.

Given a partition $\alpha\in \mathcal{P}(X)$, $\mu\in
\mathcal{M}(X)$ and a sub-$\sigma$-algebra
$\mathcal{C}\subseteq\mathcal{B}_{\mu}$, let
\begin{equation*}
H_{\mu}(\alpha)=\sum_{A\in \alpha}-\mu(A)\log \mu(A)\,\,\text{and}\,\,
H_{\mu}(\alpha\mid\mathcal{C})=\sum_{A\in\alpha}\int_X-\mathbb{E}(1_A\mid\mathcal{C})\log
\mathbb{E}(1_A\mid\mathcal{C})d\mu,
\end{equation*}
where $\mathbb{E}(1_A\mid\mathcal{C})$ is the expectation of $1_A$
with respect to $\mathcal{C}$. One standard fact states that
$H_{\mu}(\alpha\mid\mathcal{C})$ increases with respect to $\alpha$
and decreases with respect to $\mathcal{C}$. The measure-theoretic entropy of $\mu$ is defined as
$$h_{\mu}(T)=\sup_{\alpha\in \mathcal{P}_X}h_{\mu}(T,\alpha),$$ where $$h_{\mu}(T,\alpha)=\lim_{n\rightarrow +\infty}\frac{1}{n}H_{\mu}(\alpha^{n-1}_0)=\inf_{n\geq 1}H_{\mu}(\alpha^{n-1}_0).$$
For each $f\in C(X,\mathbb{R})$, the \emph{measure-theoretic pressure of} $\mu$ is defined as
 $$P_{\mu}(T,f)=h_{\mu}(T)+\int_Xfd\mu.$$

For a given $\mathcal{U}\in \mathcal{C}_X$, set
\begin{equation*}
H_{\mu}(\mathcal{U})=\inf_{\beta\in \mathcal{P}_X, \,\beta\succeq \mathcal{U}}H_{\mu}(\beta) \,\,\text{and}\,\, H_{\mu}(\mathcal{U}\mid \mathcal{C})=\inf_{\beta\in \mathcal{P}_X, \,\beta\succeq \mathcal{U}}H_{\mu}(\beta\mid \mathcal{C}).
\end{equation*}
When $\mu\in\mathcal{M}(X,T)$ and $\mathcal{C}$ is $T$-invariant (i.e. $T^{-1}\mathcal{C}=\mathcal{C}$),  $H_{\mu}(\mathcal{U}^{n-1}_0\mid \mathcal{C})$ is a non-negative subadditive sequence for a given $\mathcal{U}\in \mathcal{U}$. Let
\begin{equation*}
h_{\mu}(T,\mathcal{U}\mid \mathcal{C})=\lim_{n\rightarrow +\infty}\frac{1}{n}H_{\mu}(\mathcal{U}^{n-1}_0\mid \mathcal{C})=\inf_{n\geq 1}H_{\mu}(\mathcal{U}^{n-1}_0\mid \mathcal{C}).
\end{equation*}
For $\mathcal{C}=\{\emptyset, X\}(\text{mod} \,\,\mu)$, we write $H_{\mu}(\mathcal{U}\mid\mathcal{C})$ and $h_{\mu}(T,\mathcal{U}\mid\mathcal{C})$ by $H_{\mu}(\mathcal{U})$ and $h_{\mu}(T,\mathcal{U})$ respectively. Romagnoli \cite{Rom2003} proved
$$h_{\mu}(T)=\sup_{\mathcal{U}\in \mathcal{C}^o_X}h_{\mu}(T,\mathcal{U}).$$
It is well known that for
$\beta \in\mathcal{P}_X$, $h_{\mu}(T,\beta)=h_{\mu}(T,\beta\mid P_{\mu}(T))\leq H_{\mu}(\beta\mid P_{\mu}(T))$, where $P_{\mu}(T)$ is the Pinsker $\sigma$-algebra of $(X,\mathcal{B}_{\mu},\mu,T)$. Huang \cite{Huang2008CMP} showed the following result.
\begin{lemma}[Lemma 2.1 \cite{Huang2008CMP}]\label{4.2}
Let $(X,T)$ be a TDS, $\mu\in \mathcal{M}(X,T)$ and $\mathcal{U}\in \mathcal{C}_X$. Then
\begin{equation*}
h_{\mu}(T,\mathcal{U})=h_{\mu}(T,\mathcal{U}\mid P_{\mu}(T)).
\end{equation*}
\end{lemma}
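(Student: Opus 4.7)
The plan is to reduce the equality from covers to partitions, so that the classical Pinsker theorem for partitions can be brought to bear. It rests on three ingredients. First, Pinsker's theorem for partitions: for every $\beta\in\mathcal{P}_X$, $h_\mu(T,\beta)=h_\mu(T,\beta\mid P_\mu(T))$. Second, Romagnoli's infimum rule for unconditional cover entropy: $h_\mu(T,\mathcal{U})=\inf\{h_\mu(T,\beta):\beta\in\mathcal{P}_X,\ \beta\succeq\mathcal{U}\}$. Third, the analogous infimum rule for conditional cover entropy, valid whenever the sub-$\sigma$-algebra in question is $T$-invariant: $h_\mu(T,\mathcal{U}\mid P_\mu(T))=\inf\{h_\mu(T,\beta\mid P_\mu(T)):\beta\in\mathcal{P}_X,\ \beta\succeq\mathcal{U}\}$.

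Granted these three ingredients, the lemma follows from a one-line chain of identities:
\begin{equation*}
h_\mu(T,\mathcal{U})=\inf_{\beta\succeq\mathcal{U}}h_\mu(T,\beta)=\inf_{\beta\succeq\mathcal{U}}h_\mu(T,\beta\mid P_\mu(T))=h_\mu(T,\mathcal{U}\mid P_\mu(T)),
\end{equation*}
where the first equality is Romagnoli's rule, the second is term-by-term Pinsker, and the third is the conditional infimum rule. Note that the easy direction $h_\mu(T,\mathcal{U}\mid P_\mu(T))\le h_\mu(T,\mathcal{U})$ is actually already available directly from the general bound $H_\mu(\gamma\mid P_\mu(T))\le H_\mu(\gamma)$ for partitions $\gamma\succeq\mathcal{U}_0^{n-1}$, followed by taking the infimum over $\gamma$ and letting $n\to\infty$; this gives a useful consistency check on the chain above.

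The main obstacle is establishing the conditional infimum rule, which does not appear in Romagnoli's original paper and which demands genuine work beyond merely inserting ``$\mid P_\mu(T)$'' into the unconditional proof. The natural route is to adapt Romagnoli's argument by conditioning throughout: for each $n$, exploit the finiteness of $\mathcal{P}^*(\mathcal{U}_0^{n-1})$---the same finiteness used in Lemma \ref{sc2.1}---to select a partition $\alpha_n\in\mathcal{P}^*(\mathcal{U}_0^{n-1})$ realizing $H_\mu(\alpha_n\mid P_\mu(T))=H_\mu(\mathcal{U}_0^{n-1}\mid P_\mu(T))$, and then splice the $\alpha_n$ through a Rokhlin tower construction into a single partition $\beta\succeq\mathcal{U}$ whose dynamical conditional entropy $h_\mu(T,\beta\mid P_\mu(T))$ is within $\varepsilon$ of $h_\mu(T,\mathcal{U}\mid P_\mu(T))$. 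The $T$-invariance of $P_\mu(T)$ is essential here: it ensures that $n\mapsto H_\mu(\mathcal{U}_0^{n-1}\mid P_\mu(T))$ is subadditive, so that $h_\mu(T,\mathcal{U}\mid P_\mu(T))=\inf_n\tfrac{1}{n}H_\mu(\mathcal{U}_0^{n-1}\mid P_\mu(T))$, and it permits the dynamical refinement built from the $\alpha_n$ to interact coherently with the conditioning. This splicing/refinement step is the technical heart of the argument.
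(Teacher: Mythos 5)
Your chain of identities is the standard route to this lemma, and it is sound granted the three ingredients; the side remark that the easy inequality already follows from $H_\mu(\gamma\mid P_\mu(T))\le H_\mu(\gamma)$ for $\gamma\succeq\mathcal{U}_0^{n-1}$ is also correct. You have accurately located all the content in the conditional infimum rule $h_\mu(T,\mathcal{U}\mid P_\mu(T))=\inf_{\beta\in\mathcal{P}_X,\,\beta\succeq\mathcal{U}}h_\mu(T,\beta\mid P_\mu(T))$, and in fact only its nontrivial direction $\geq$ is needed, the $\leq$ being immediate because $\beta\succeq\mathcal{U}$ forces $\beta_0^{n-1}\succeq\mathcal{U}_0^{n-1}$. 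You are also right that this rule is not in Romagnoli's paper. It is, however, an established theorem: the conditional local variational machinery developed by Huang, Ye and Zhang proves it for any $T$-invariant sub-$\sigma$-algebra, and the proof of Lemma 2.1 in \cite{Huang2008CMP} is precisely your chain with that result cited rather than re-derived.

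The genuine gap is therefore your treatment of the conditional infimum rule itself. The Rokhlin-tower splice is the right template, but as written it is a plan, not a proof: you would still have to verify that the tower construction yields a single $\beta\succeq\mathcal{U}$ whose conditional dynamical entropy $h_\mu(T,\beta\mid P_\mu(T))$ is controlled by $\tfrac{1}{n}H_\mu(\alpha_n\mid P_\mu(T))$ up to a small error, that the conditioning on $P_\mu(T)$ passes cleanly through the column decomposition of the tower (this is where the $T$-invariance of $P_\mu(T)$ must actually be used), and that the tower's remainder set contributes negligibly. None of that bookkeeping appears in the proposal. Since these details are exactly what the known conditional Romagnoli theorem delivers, the cleanest repair is to cite it; as the proposal stands, the conditional infimum rule, and with it the hard direction $h_\mu(T,\mathcal{U}\mid P_\mu(T))\ge h_\mu(T,\mathcal{U})$, remains unproved.
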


For $\mathcal{U}\in \mathcal{C}_X^o$, $\mu\in \mathcal{M}(X,T)$ and  $f\in C(X,\mathbb{R})$,
we define the \emph{measure-theoretic pressure for $T$ with respect to $\mathcal{U}$} as
\begin{equation*}
P_{\mu}(T,f,\mathcal{U})=h_{\mu}(T,\mathcal{U})+\int_Xfd\mu.
\end{equation*}
Obviously,
\begin{equation*}
P_{\mu}(T,f)=h_{\mu}(T)+\int_Xfd\mu=\sup_{\mathcal{U}\in
\mathcal{C}_X^o }h_{\mu}(T,\mathcal{U})+\int_Xfd\mu
=\sup_{\mathcal{U}\in \mathcal{C}_X^o } P_{\mu}(T,f,\mathcal{U}).
\end{equation*}

Let $(X,T)$ be a TDS, $\mu \in \mathcal{M}(X,T)$ and $\mathcal{B}_{\mu}$ be the completion of $
\mathcal{B}_X$ under $\mu$. Then $(X,\mathcal{B}_{\mu},\mu,T)$ be a Lebesgue system. If $\{\alpha_i\}_{i\in I}$ is a countable family of finite partitions of $X$, the partition $\alpha=\bigvee_{i\in I}\alpha_i$ is called a \emph{measurable partition}. The sets $A\in \mathcal{B}_{\mu}$, which are unions of atoms of $\alpha$, form a sub-$\sigma$-algebra of $\mathcal{B}_{\mu}$ by $\widehat{\alpha}$ or $\alpha$ if there is no ambiguity. Every sub-$\sigma$-algebra of $\mathcal{B}_{\mu}$ coincides with a $\sigma$-algebra constructed in this way (mod $\mu$).

Given a measurable partition $\alpha$, put $\alpha^-=\bigvee_{n=1}^{\infty}T^{-n}\alpha$ and $\alpha^T=\bigvee_{n=-\infty}^{+\infty}T^{-n}\alpha$. Define in the same way $\mathcal{F}^-$ and $\mathcal{F}^T$ if $\mathcal{F}$ is a sub-$\sigma$-algebra of $\mathcal{B}_{\mu}$. It is clear that for a measurable partition $\alpha$ of $X$, $\widehat{\alpha^-}=(\widehat{\alpha})^-$ and $\widehat{\alpha^T}=(\widehat{\alpha})^T$ (mod $\mu$).

Let $\mathcal{F}$ be a sub-$\sigma$-algebra of $\mathcal{B}_{\mu}$ and $\alpha$ be the measurable partition of $X$ with $\alpha^-=\mathcal{F}$ (mod $\mu$). $\mu $ can be disintegrated over $\mathcal{F}$ as $\mu=\int_X \mu_x d\mu(x)$, where $\mu_x\in \mathcal{M}(X)$ and $\mu_x(\alpha(x))=1$ for $\mu$-a.e. $x\in X$. The disintegration is characterized by the properties \ref{p1} and \ref{p2} below:
\renewcommand{\theenumi}{(\alph{enumi})}
\begin{enumerate}
\item for every $ f\in L^1(X,\mathcal{B}_X,\mu)$, $f\in L^1(X,\mathcal{B}_X,\mu_x)$ for $\mu$-a.e. $x\in X$,    and the map  \\$x\mapsto \int_Xf(y)d\mu_x(y)$ is in $L^1(X,\mathcal{F},\mu)$;\label{p1}
\item for every $ f\in L^1(X,\mathcal{B}_X,\mu)$, $\mathbb{E}_{\mu}(f\mid \mathcal{F})(x)=\int_Xfd\mu_x$  for $\mu$-a.e. $x\in X$.\label{p2}
\end{enumerate}
Then for any  $f\in L^1(X,\mathcal{B}_X,\mu)$,
$$
\int_X\big(\int_Xfd\mu_x    \big)d\mu(x)=\int_Xfd\mu.
$$

The following lemma was proved in \cite{Huang2008CMP}.

\begin{lemma}[Lemma 2.2 \cite{Huang2008CMP}]\label{lem2.2}
Let $(X,T)$ be a TDS, $\mu\in \mathcal{M}(X,T)$ and $\mathcal{F}$ be a sub-$\sigma$-algebra of $\mathcal{B}_{\mu}$. If $\mu=\int_X\mu_x d\mu(x)$ is the disintegration of $\mu$ over $\mathcal{F}$, then
\renewcommand{\theenumi}{(\alph{enumi})}
\begin{enumerate}
\item for $\mathcal{V}\in \mathcal{C}_X$, $H_{\mu}(\mathcal{V}\mid\mathcal{F})=\int_XH_{\mu_x}(\mathcal{V})d\mu(x)$,
\item for $\mathcal{U}$, $\mathcal{V}\in \mathcal{C}_X$, $H_{\mu}(\mathcal{U}\vee \mathcal{V}\mid \mathcal{F})\leq H_{\mu}(\mathcal{U}\mid \mathcal{F}) +H_{\mu}( \mathcal{V}\mid \mathcal{F})$.
\end{enumerate}
\end{lemma}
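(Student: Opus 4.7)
The strategy is to reduce part (a) to a pointwise identity on the fiber measures $\mu_x$, and then to derive (b) from (a) combined with the classical subadditivity of conditional entropy for partitions. The bridge between covers and fibers is the partition identity $H_\mu(\beta \mid \mathcal{F}) = \int_X H_{\mu_x}(\beta)\,d\mu(x)$ valid for every $\beta \in \mathcal{P}_X$; this follows from property (b) of the disintegration applied to each $1_B$ with $B \in \beta$, which gives $\mathbb{E}(1_B \mid \mathcal{F})(x) = \mu_x(B)$ for $\mu$-a.e.\ $x$, after substitution in the definition of $H_\mu(\beta \mid \mathcal{F})$.

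The easy direction of (a) is $H_\mu(\mathcal{V} \mid \mathcal{F}) \geq \int_X H_{\mu_x}(\mathcal{V})\,d\mu(x)$: every $\beta \in \mathcal{P}_X$ with $\beta \succeq \mathcal{V}$ satisfies $H_{\mu_x}(\beta) \geq H_{\mu_x}(\mathcal{V})$ pointwise, so integrating and taking the infimum over $\beta$ yields the bound. The opposite inequality is the main obstacle, since it requires a measurable selection of near-optimal partitions. To handle this I would invoke Lemma~\ref{sc2.1}, which guarantees that the infimum defining $H_{\mu_x}(\mathcal{V})$ is always attained on the \emph{finite} family $\mathcal{P}^*(\mathcal{V}) = \{\beta_1, \ldots, \beta_k\}$. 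Since $x \mapsto \mu_x$ is $\mathcal{F}$-measurable, each of the finitely many functions $x \mapsto H_{\mu_x}(\beta_i)$ is $\mathcal{F}$-measurable, so the sets
\begin{equation*}
X_i = \bigl\{x : H_{\mu_x}(\beta_i) = \min_{1 \leq j \leq k} H_{\mu_x}(\beta_j),\ H_{\mu_x}(\beta_j) > H_{\mu_x}(\beta_i) \text{ for all } j<i\bigr\}
\end{equation*}
form an $\mathcal{F}$-measurable partition of $X$. Gluing these choices produces the single partition $\beta = \{B \cap X_i : B \in \beta_i,\ 1 \leq i \leq k\}$, which still refines $\mathcal{V}$. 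Because each $X_i \in \mathcal{F}$ one has $\mathbb{E}(1_{B \cap X_i} \mid \mathcal{F}) = 1_{X_i}\,\mu_x(B)$, and with the convention $0\log 0 = 0$ the fiber identity collapses $H_\mu(\beta \mid \mathcal{F})$ to $\sum_i \int_{X_i} H_{\mu_x}(\beta_i)\,d\mu(x) = \int_X H_{\mu_x}(\mathcal{V})\,d\mu(x)$, which supplies the matching upper bound on $H_\mu(\mathcal{V} \mid \mathcal{F})$.

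For part (b), once (a) is available it suffices to establish the pointwise inequality $H_{\mu_x}(\mathcal{U} \vee \mathcal{V}) \leq H_{\mu_x}(\mathcal{U}) + H_{\mu_x}(\mathcal{V})$ and integrate. This reduces to partitions: for any $\beta_1, \beta_2 \in \mathcal{P}_X$ with $\beta_1 \succeq \mathcal{U}$ and $\beta_2 \succeq \mathcal{V}$, the join $\beta_1 \vee \beta_2$ refines $\mathcal{U} \vee \mathcal{V}$, and the classical partition subadditivity $H_{\mu_x}(\beta_1 \vee \beta_2) \leq H_{\mu_x}(\beta_1) + H_{\mu_x}(\beta_2)$ (a consequence of the concavity of $-t\log t$) gives the desired bound; taking infima over $\beta_1$ and $\beta_2$ completes (b).
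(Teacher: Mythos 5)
The paper does not prove this lemma; it is quoted verbatim from Huang (2008) and used as a black box. Your argument follows what I believe is the standard route (and essentially Huang's own): reduce to the partition-level identity $H_\mu(\beta\mid\mathcal{F})=\int_X H_{\mu_x}(\beta)\,d\mu(x)$ via the disintegration property $\mathbb{E}(1_B\mid\mathcal{F})(x)=\mu_x(B)$, obtain the inequality $\geq$ in (a) by integrating, obtain $\leq$ by a measurable-selection-and-gluing argument over the finite family $\mathcal{P}^*(\mathcal{V})$, and deduce (b) pointwise from (a) together with classical subadditivity of $H_{\mu_x}(\cdot)$ on partitions. The gluing step is handled correctly: you observe that $X_i\in\mathcal{F}$ so that $\mathbb{E}(1_{B\cap X_i}\mid\mathcal{F})=1_{X_i}\,\mu_x(B)$, which is exactly what makes the conditional entropy of the glued partition collapse to $\int_X H_{\mu_x}(\mathcal{V})\,d\mu(x)$.

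There is, however, one genuine gap of attribution that matters for rigor. You invoke Lemma~\ref{sc2.1} for the claim that the infimum defining $H_{\mu_x}(\mathcal{V})=\inf_{\beta\in\mathcal{P}_X,\,\beta\succeq\mathcal{V}}H_{\mu_x}(\beta)$ is attained on the finite family $\mathcal{P}^*(\mathcal{V})$. But Lemma~\ref{sc2.1} concerns the functional $\beta\mapsto\sum_{B\in\beta}\sup_{y\in B\cap M}f(y)$, not the entropy $\beta\mapsto H_{\mu_x}(\beta)$; it does not state what you need. The entropy version is true, but it is a separate result (due to Romagnoli, and used throughout the local variational principle literature), and its proof rests on concavity of $t\mapsto -t\log t$: starting from any $\beta\succeq\mathcal{V}$, merge atoms of $\beta$ by a choice of containing element of $\mathcal{V}$ to get $\gamma\succeq\mathcal{V}$ with $H_{\mu_x}(\gamma)\leq H_{\mu_x}(\beta)$; the atom masses $\mu_x(\gamma\text{-atom}\cap A)$, $A$ running over the atoms of the Borel partition generated by $\mathcal{V}$, parametrize a polytope on which $H_{\mu_x}$ is concave, so the minimum is attained at a vertex, and the vertices are precisely the elements of $\mathcal{P}^*(\mathcal{V})$. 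Replacing the reference to Lemma~\ref{sc2.1} by a reference to this entropy-specific lemma (or supplying the concavity argument) repairs the proof; with that change it is complete.
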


Let $K$ be a non-empty closed subset of $X$. For $\epsilon >0$, a subset of $X$ is called an $(n,\epsilon)$-\emph{spanning set} of $K$, if for any $x\in K$ there exists $y\in F$ with $d_n(x,y)\leq \epsilon$, where $d_n(x,y)=\max_{i=0}^{n-1}d(T^ix,T^iy)$; a subset $E$ of $K$ is called an $(n,\epsilon)$-\emph{separated set} of $K$, if $x,y\in E$, $x\neq y$ implies $d_n(x,y)>\epsilon$.
Let $r_n(d,T,\epsilon,K)$ denote the smallest cardinality of any $(n,\epsilon)$-spanning subset for $K$ and $s_n(d,T,\epsilon,K)$ denote the largest cardinality of any $(n,\epsilon)$-separated subset of $K$.

For each $\epsilon>0$ and $f\in C(X,\mathbb{R})$, we denote
\begin{equation*}
P_n(T,f,\epsilon, K)=\sup\{\sum_{x\in E}\exp f_n(x): E \,\,
\text{is  an } \,\, (n,\epsilon)\text{-separated subsets of }\,\, K\}.
\end{equation*}
\emph{The topological pressure of $T$ for the closed subset $K$ }is defined as
\begin{equation*}
P(T,f, K)=\lim_{\epsilon \rightarrow 0}\limsup_{n\rightarrow +\infty}\frac{1}{n}\log P_n(T,f,\epsilon. K).
\end{equation*}
If $f$ is the null function, then $P_n(T,0,\epsilon, K)=s_n(d,T,\epsilon,K)$. It follows that $P(T,f, K)=h(T, K)$, where $h(T,K)$ is the Bowen entropy for the closed subset $K$ defined in \cite{Walters} (see also \cite{Huang2008CMP}).
When $K=X$, $P(T,f,X)=P(T,f)$, where $P(T,f)$ is the standard notion of topological pressure defined in \cite{Walters}. Moreover, it is  not hard to verify that $P(T,f,K)=\sup_{\mathcal{U}\in \mathcal{C}^o_X}P(T,f,\mathcal{U},K)$.

\section{$\epsilon$-stable sets}\label{sec3}

 Let $(X,T)$ be a TDS with a compatible metric $d$. Given $\epsilon >0$, the $\epsilon$-\emph{stable set} of $x$ under $T$ is the set of points whose forward orbit $\epsilon$-shadows that of $x$:
\begin{equation*}
W^s_{\epsilon}(x,T)=\{y\in X: d(T^nx,T^ny)\leq \epsilon \,\, \text{ for all} \,\, n=0, 1, \cdots\}.
\end{equation*}
Since the preimages of these sets can be nontrivial, we can consider the following function. For each $x\in X$, $f\in C(X,\mathbb{R})$ and $\epsilon >0$, let
\begin{equation*}
P_s(T,f,x,\epsilon):=\lim_{\delta\rightarrow 0}\limsup_{n\rightarrow +\infty}\frac{1}{n}\log P_n(T,f,\delta, T^{-n}W^s_{\epsilon}(x,T)).
\end{equation*}
$P_s(T,f,x,\epsilon)$ is called the \emph{topological pressure of the preimages of }$\epsilon$-\emph{stable set of }$x$.
For $f\equiv 0$, $P_s(T,0,x,\epsilon)=h_s(T,x,\epsilon)$, where the latter is the dispersal rate function defined in \cite{Fiebig2003}. It was proved in \cite{Huang2008CMP} that
$\sup_{x\in X}h_s(T,x,\epsilon)=h_{\text{top}}(T)\,\, \text{for all}\,\, \epsilon >0$. In this section, we will show that this is also true for the functions $P_s(T,f,x,\epsilon)$ and $P(T,f)$. By proving that for any $\mu\in \mathcal{M}^e(X,T)$ with positive entropy, $\lim_{\epsilon\rightarrow 0}P_s(T,f,x,\epsilon)\geq P_{\mu}(T,f)$ for $\mu$-a.e. $x\in X$, we get the result. Before proving Theorem \ref{th3.7}, we give several lemmas.

\begin{lemma}\label{lem3.1}
Let $(X,T)$ be a TDS, $f\in C(X,\mathbb{R})$, and $\{K_n\}$ be a sequence of non-empty closed subsets of $X$. Then
\begin{equation*}
\lim_{\delta>0}\limsup_{n\rightarrow +\infty}\frac{1}{n}\log P_n(T,f,\delta,K_n)=\sup_{\mathcal{U}\in \mathcal{C}^0_X}\limsup_{n\rightarrow +\infty}\frac{1}{n}\log P_n(T,f,\mathcal{U}, K_n).
\end{equation*}
\end{lemma}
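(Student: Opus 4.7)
The plan is to prove the two inequalities separately. Write $L$ for the right-hand side of the lemma and $R$ for the left-hand side. Both quantities are monotone: refining $\mathcal{U}$ shrinks the family of admissible $\mathcal{V}$ in the infimum defining $P_n(T,f,\mathcal{U},K_n)$, so $\mathcal{U}'\succeq\mathcal{U}$ forces $P_n(T,f,\mathcal{U},K_n)\leq P_n(T,f,\mathcal{U}',K_n)$ and $L$ is realized by finer and finer covers, while $\delta\mapsto P_n(T,f,\delta,K_n)$ is non-increasing in $\delta$. This monotonicity is what lets us trade a fine cover against a small separation scale.

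For $R\leq L$, fix $\delta>0$ and choose any finite open cover $\mathcal{U}$ of $X$ with $\mathrm{diam}(\mathcal{U})<\delta$. If $A=\bigcap_{i=0}^{n-1}T^{-i}U_i\in\mathcal{U}_0^{n-1}$ and $y,z\in A$, then $d(T^iy,T^iz)\leq\mathrm{diam}(U_i)<\delta$ for every $i$, so $A$ has $d_n$-diameter strictly less than $\delta$ and meets any $(n,\delta)$-separated set $E\subseteq K_n$ in at most one point. For $\mathcal{V}\succeq\mathcal{U}_0^{n-1}$, each $V\in\mathcal{V}$ lies in some atom and hence contains at most one point of $E$; choosing a distinct $V_x\ni x$ for every $x\in E$ yields
\[
\sum_{V\in\mathcal{V}}\sup_{y\in V\cap K_n}\exp f_n(y)\geq\sum_{x\in E}\exp f_n(x).
\]
Taking the infimum over $\mathcal{V}$ and the supremum over $E$ gives $P_n(T,f,\delta,K_n)\leq P_n(T,f,\mathcal{U},K_n)$; passing to $\limsup_n\tfrac{1}{n}\log$ and then $\delta\to 0$ yields $R\leq L$.

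For $L\leq R$, fix $\mathcal{U}\in\mathcal{C}^o_X$ with Lebesgue number $\delta_0$. For $0<\delta<\delta_0$ and each $n$, take a maximal $(n,\delta)$-separated subset $E_n\subseteq K_n$; by maximality the closed Bowen balls $B_n(x,\delta):=\{y:d_n(x,y)\leq\delta\}$ with $x\in E_n$ cover $K_n$, and the Lebesgue property places each $B_n(x,\delta)$ inside some atom $A_x\in\mathcal{U}_0^{n-1}$. The key construction is the finite family
\[
\mathcal{V}_n:=\{A_x:x\in E_n\}\cup\{A\cap(X\setminus K_n):A\in\mathcal{U}_0^{n-1}\},
\]
which refines $\mathcal{U}_0^{n-1}$ and covers $X$, because its first piece covers $K_n$ while its second piece covers the open set $X\setminus K_n$; members of the second piece miss $K_n$ and contribute $0$ to the sum defining $P_n(T,f,\mathcal{U},K_n)$. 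Since $T^iA_x$ is contained in some $U_i\in\mathcal{U}$, the atom $A_x$ has $d_n$-diameter at most $\mathrm{diam}(\mathcal{U})$, so $\sup_{y\in A_x\cap K_n}\exp f_n(y)\leq e^{n\omega(f,\mathrm{diam}(\mathcal{U}))}\exp f_n(x)$ with $\omega(f,\eta):=\sup\{|f(u)-f(v)|:d(u,v)\leq\eta\}$, whence
\[
P_n(T,f,\mathcal{U},K_n)\leq e^{n\omega(f,\mathrm{diam}(\mathcal{U}))}P_n(T,f,\delta,K_n).
\]
Taking $\tfrac{1}{n}\log$, $\limsup_n$, and $\delta\to 0$, then replacing $\mathcal{U}$ by a refinement of arbitrarily small diameter and invoking uniform continuity of $f$ together with the monotonicity above, eliminates the $\omega$-term and gives $L\leq R$. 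The main obstacle is precisely this construction: a naive choice like $\mathcal{V}_n=\mathcal{U}_0^{n-1}$ allows up to $|\mathcal{U}|^n$ atoms to overlap each Bowen ball and leaves a constant $\log|\mathcal{U}|$ term in the estimate that does not vanish upon taking $\sup_{\mathcal{U}}$; covering the surplus region $X\setminus K_n$ with pieces disjoint from $K_n$ sidesteps this issue, crucially exploiting that the supremum in $P_n(T,f,\mathcal{U},K_n)$ is taken only over $V\cap K_n$.
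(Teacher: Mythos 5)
Your proof is correct and follows essentially the same route as the paper's: the easy direction via a cover of diameter less than $\delta$ so that each atom of $\mathcal{U}_0^{n-1}$ meets an $(n,\delta)$-separated set at most once; the hard direction via the Lebesgue number of $\mathcal{U}$, a maximal separated set that is therefore spanning, the observation that each Bowen ball lies in a single atom of $\mathcal{U}_0^{n-1}$, and an oscillation bound $\omega(f,\mathrm{diam}(\mathcal{U}))$ that is killed by refining $\mathcal{U}$. Two small points where your write-up is actually tighter than the paper's: you explicitly pad your family of atoms $\{A_x\}$ with the pieces $A\cap(X\setminus K_n)$ to obtain an honest element of $\mathcal{C}_X$ (the paper's $\mathcal{W}$ covers only $K_n$, not $X$, and its membership in $\mathcal{C}_X$ is asserted without comment), and you spell out the final refinement/monotonicity step that removes the $\omega$-term, which the paper compresses into ``since $\mathcal{U}$ is arbitrary.'' The paper also routes the oscillation bound through an auxiliary quantity $Q_n$ defined with an infimum in place of a supremum, whereas you work directly with $P_n$; this is a cosmetic difference, not a substantive one.
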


\begin{proof}
For a fixed $\delta >0$, choose $\mathcal{V}\in \mathcal{C}_X^o$ with $\text{diam}(\mathcal{V})<\delta$. For $n\in \mathbb{N}$ let $A$ be an $(n,\delta)$-separated set of $K_n$. Since $B\cap K_n$ contains at most one element of $A$ for each $B$ of $\bigvee_{i=0}^{n-1}T^{-i}\mathcal{V}$, then for every $\mathcal{W}\in \mathcal{C}_X$ with $\mathcal{W}\succeq \mathcal{V}_0^{n-1}$, each element of $\mathcal{W}$ also contains at most one element of $A$. We get $\sum_{x\in A}\exp f_n(x)\leq P_n(T,f,\mathcal{V},K_n)$. That is $P_n(T,f,\delta, K_n)\leq P_n(T,f,\mathcal{V},K_n)$. Then
\begin{align*}
\limsup_{n\rightarrow +\infty}\frac{1}{n}\log P_n(T,f,\delta, K_n)
&\leq \limsup_{n\rightarrow +\infty}\frac{1}{n}\log P_n(T,f,\mathcal{V},K_n)\\
&\leq \sup_{\mathcal{U}\in \mathcal{C}^o_X}\limsup_{n\rightarrow +\infty}\frac{1}{n}\log P_n(T,f,\mathcal{U},K_n).
\end{align*}
Letting $\delta\rightarrow 0$, we get
\begin{equation*}
\lim_{\delta>0}\limsup_{n\rightarrow +\infty}\frac{1}{n}\log P_n(T,f,\delta,K_n)
\leq
\sup_{\mathcal{U}\in \mathcal{C}^0_X}\limsup_{n\rightarrow +\infty}\frac{1}{n}\log P_n(T,f,\mathcal{U}, K_n).
\end{equation*}

In the following, we show the converse inequality. For any fixed $\mathcal{U}\in \mathcal{C}_X^o$, let $\delta $ be the Lebesgue number of $\mathcal{U}$. For $n\in \mathbb{N}$, let $E$ be an $(n,\frac{\delta}{2})$-separated set of $K_n$ with the largest cardinality. Then $E$ ia also an $(n,\frac{\delta}{2})$-spanning set of $K_n$. From the definition of spanning sets, we know
\begin{equation*}
\bigcup_{x\in E}\bigcap_{i=0}^{n-1}T^{-i}\overline{B_{\frac{\delta}{2}}(T^ix)}\supset K_n,\,\, \text{where}\,\, \overline{B_{\frac{\delta}{2}}(T^ix)}=\{y\in X:d(T^ix,y)\leq \frac{\delta}{2}\}.
\end{equation*}
Now for each $x\in E$ and $0\leq i\leq n-1$, $\overline{B_{\frac{\delta}{2}}(T^ix)}$ is contained in some element of $\mathcal{U}$ since $\delta$ is the Lebesgue number of the open cover $\mathcal{U}$. Hence for each $x\in E$, $\bigcap_{i=0}^{n-1}T^{-i}\overline{B_{\frac{\delta}{2}}(T^ix)}$ is contained in some element of $\bigvee_{i=0}^{n-1}T^{-i}\mathcal{U}$.
Let $\mathcal{W}=\{\bigcap_{i=0}^{n-1}T^{-i}\overline{B_{\frac{\delta}{2}}(T^ix)}:x\in E\}$, then $\mathcal{W}\in \mathcal{C}_X$ and $\mathcal{W}\succeq \mathcal{U}^{n-1}_0$. Let
\begin{equation*}
Q_n(T,f,\mathcal{U},K_n)=\inf\{\sum_{V\in \mathcal{V}}\inf_{x\in V\cap K_n}\exp f_n(x):\mathcal{V}\in \mathcal{C}_X\,\,\text{and }\,\,  \mathcal{V}\succeq \mathcal{U}^{n-1}_0  \},
\end{equation*}
then
\begin{equation*}
Q_n(T,f,\mathcal{U},K_n)\leq \sum_{x\in E}f_n(x)\leq P_n(T,f,\frac{\delta}{2},K_n).
\end{equation*}
Let $\tau_{\mathcal{U}}=\sup\{\mid f(x)-f(y) \mid :d(x,y)\leq \text{diam}(\mathcal{U}) \}$, then
$\exp (-n \tau_{\mathcal{U}})P_n(T,f,\mathcal{U},K_n)\leq Q_n(T,f,\mathcal{U},K_n)$.
So
\begin{align*}
-\tau_{\mathcal{U}}+\limsup_{n\rightarrow +\infty}\frac{1}{n}\log P_n(T,f,\mathcal{U},K_n)
&\leq \limsup_{n\rightarrow +\infty}\frac{1}{n}\log P_n(T,f,\frac{\delta}{2},K_n)\\
& \leq \lim_{\delta\rightarrow 0}\limsup_{n\rightarrow +\infty}\frac{1}{n}\log P_n(T,f,\frac{\delta}{2},K_n).
\end{align*}
Since $\mathcal{U}$ is arbitrary, we get
\begin{align*}
\sup_{\mathcal{U}\in \mathcal{C}^0_X}\limsup_{n\rightarrow +\infty}\frac{1}{n}\log P_n(T,f,\mathcal{U}, K_n)   \leq \lim_{\delta>0}\limsup_{n\rightarrow +\infty}\frac{1}{n}\log P_n(T,f,\delta,K_n).
\end{align*}
\end{proof}

An immediate consequence of Lemma \ref{lem3.1} is the following.

\begin{lemma}\label{lem3.2}
Let $(X,T)$ be a TDS and $f\in C(X,\mathbb{R})$. Then for each $x\in X$ and $\epsilon>0$,
\begin{equation*}
P_s(T,f,x,\epsilon)=\sup_{\mathcal{U}\in \mathcal{C}_X^o}\limsup_{n\rightarrow +\infty} \frac{1}{n} \log P_n(T,f,\mathcal{U},T^{-n}W^s_{\epsilon}(x,T)).
\end{equation*}
\end{lemma}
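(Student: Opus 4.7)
The plan is to read the lemma as a direct specialization of Lemma \ref{lem3.1} to the particular sequence of closed subsets $K_n:=T^{-n}W^s_{\epsilon}(x,T)$, so the work reduces to verifying that this sequence satisfies the hypotheses of Lemma \ref{lem3.1} and that the left-hand side there coincides with $P_s(T,f,x,\epsilon)$ by definition.

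First I would check that each $K_n$ is indeed a nonempty closed subset of $X$. The nonemptiness is immediate since $x\in W^s_{\epsilon}(x,T)$, so $T^{-n}x\subseteq K_n$ is nonempty (for instance, any preimage of $x$, or $x$ itself when $n=0$). For closedness, note that by continuity of each $T^m$ the set
\begin{equation*}
W^s_{\epsilon}(x,T)=\bigcap_{m\geq 0}\{y\in X:d(T^mx,T^my)\leq\epsilon\}
\end{equation*}
is an intersection of closed sets, hence closed; and $T^{-n}$ applied to a closed set is closed by continuity of $T^n$. So the hypotheses of Lemma \ref{lem3.1} are met.

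Next I would invoke Lemma \ref{lem3.1} with this particular choice of $K_n$. It directly gives
\begin{equation*}
\lim_{\delta\to 0}\limsup_{n\to+\infty}\frac{1}{n}\log P_n(T,f,\delta,T^{-n}W^s_{\epsilon}(x,T))
=\sup_{\mathcal{U}\in\mathcal{C}_X^o}\limsup_{n\to+\infty}\frac{1}{n}\log P_n(T,f,\mathcal{U},T^{-n}W^s_{\epsilon}(x,T)).
\end{equation*}
The left-hand side is exactly the definition of $P_s(T,f,x,\epsilon)$ given at the start of Section \ref{sec3}, so this is the desired equality. There is no real obstacle here: the substance is entirely in Lemma \ref{lem3.1}, and the only point requiring care is the preliminary verification that $K_n$ is closed so that the quantities $P_n(T,f,\mathcal{U},K_n)$ and $P_n(T,f,\delta,K_n)$ are well defined in the sense introduced in Section \ref{sec2}.
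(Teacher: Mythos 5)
Your proof is correct and is exactly what the paper intends: the paper simply labels Lemma \ref{lem3.2} ``an immediate consequence of Lemma \ref{lem3.1},'' i.e.\ specialize to $K_n=T^{-n}W^s_\epsilon(x,T)$ and match the left side with the definition of $P_s(T,f,x,\epsilon)$. Your preliminary check that these $K_n$ are nonempty and closed is a reasonable (if routine) addition that the paper leaves implicit.
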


\begin{lemma}[Lemma 9.9 \cite{Walters}]\label{4.1}
Let $a_1,\cdots,a_k$ be given real numbers. If $p_i\geq0,
i=1,\cdots,k$, and $\sum_{i=1}^kp_i=1$, then
$$
\sum_{i=1}^kp_i(a_i-\log p_i)\leq \log(\sum_{i=1}^k e^{a_i}),
$$
and equality holds iff $p_i=\frac{e^{a_i}}{\sum_{i=1}^k e^{a_i}}$
for all $i=1,\cdots, k$.
\end{lemma}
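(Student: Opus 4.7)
The statement is a classical entropy inequality, recognizable as a form of Gibbs' inequality (equivalently, nonnegativity of the Kullback--Leibler divergence). My plan is to derive it as an application of Jensen's inequality to the strictly concave function $\log$, after first reducing to the case where all weights $p_i$ are strictly positive.

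First, I would handle the degeneracy from zero weights using the standard convention $0\cdot\log 0 = 0$ and $p_i(a_i-\log p_i)=0$ when $p_i=0$. With this convention, both sides of the inequality depend only on the indices $i$ with $p_i>0$, and moving from the full sum $\sum_{i=1}^k e^{a_i}$ on the right to the partial sum $\sum_{p_i>0} e^{a_i}$ only decreases the right-hand side, so it suffices to prove the inequality assuming $p_i>0$ for every $i$. This reduction also makes the equality analysis cleaner since it forces any extremal distribution to have full support on the indices $\{1,\dots,k\}$ (unless some $e^{a_i}$ vanishes, which it does not, so equality with the full right-hand side indeed requires $p_i>0$ for all $i$).

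Next, with all $p_i>0$, I would apply Jensen's inequality to $\log$ with weights $p_i$ at the points $e^{a_i}/p_i$:
\begin{equation*}
\sum_{i=1}^k p_i \log\!\left(\frac{e^{a_i}}{p_i}\right) \;\leq\; \log\!\left(\sum_{i=1}^k p_i \cdot \frac{e^{a_i}}{p_i}\right) \;=\; \log\!\left(\sum_{i=1}^k e^{a_i}\right).
\end{equation*}
The left-hand side expands as $\sum_{i=1}^k p_i(a_i - \log p_i)$, which is exactly what we want to bound. The inequality is thus immediate.

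For the equality case, I would invoke the strict concavity of $\log$: Jensen's inequality is an equality precisely when the quantities $e^{a_i}/p_i$ are constant in $i$ (across those indices with positive weight). Writing this common value as $C$, the normalization $\sum_i p_i = 1$ forces $C = \sum_i e^{a_i}$, and hence $p_i = e^{a_i}/\sum_{j} e^{a_j}$ for each $i$. A direct substitution confirms that this distribution indeed achieves equality, completing the characterization. I do not expect any real obstacle here; the only subtlety is bookkeeping with the convention for $p_i=0$, and the proof is short enough that it is really just an invocation of Jensen.
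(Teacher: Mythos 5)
Your proof is correct and is the standard argument for this classical fact; the paper itself gives no proof, deferring to Walters' Lemma 9.9, whose proof (via $\log x \le x-1$, the tangent-line form of concavity of $\log$) is equivalent to your Jensen step. Your handling of the $p_i=0$ terms and the equality case is also accurate, so nothing is missing.
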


Let $(X,T)$ be a TDS, $\mu \in \mathcal{M}(X,T)$ and $\mathcal{B}_{\mu}$ be the completion of $\mathcal{B}_X$ under $\mu$. The \emph{Pinsker} $\sigma$\emph{-algebra} $P_{\mu}(T)$ is defined as the smallest sub-$\sigma$-algebra of $\mathcal{B}_{\mu}$ containing $\{\xi\in \mathcal{P}_X:h_{\mu}(T,\xi)=0\}$. It is well known that $P_{\mu}(T)=P_{\mu}(T^{-1})$ and $P_{\mu}(T)$ is $T$-invariant, i.e. $T^{-1}(P_{\mu}(T))=P_{\mu}(T)$. We need the following lemma proved in \cite{Huang2008CMP}.

\begin{lemma}[Lemma 3.5 \cite{Huang2008CMP}]\label{lem3.5}
Let $(X,T)$ be a TDS, $\mu\in \mathcal{M}(X,T)$ and $\delta >0$. Then there exist $\{W_i\}_{i=1}^{\infty}\subset \mathcal{P}_X$ and $0=k_1<k_2<\cdots$ such that
\begin{enumerate}
\item $\text{diam}(W_1)<\delta$ and $\lim\limits_{i\rightarrow +\infty}\text{diam}(W_i)=0$,
\item $\lim\limits_{k\rightarrow +\infty} H_{\mu}(P_k\mid \mathcal{P}^-)=h_{\mu}(T)$, where $P_k=\bigvee_{i=1}^kT^{-k_i}W_i$ and $\mathcal{P}=\bigvee_{k=1}^{\infty}P_k$,
\item $\bigcap_{n=0}^{\infty}\widehat{T^{-n}\mathcal{P}^-}= P_{\mu}(T)$.
\end{enumerate}
\end{lemma}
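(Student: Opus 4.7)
The plan is to construct $\{W_i\}$ directly from a refining sequence of finite partitions of small diameter and to exploit that the resulting measurable partition $\mathcal{P}$ is a generator for $(X,\mathcal{B}_\mu,\mu,T)$. Using compactness of $X$, I would first fix $\alpha_1 \preceq \alpha_2 \preceq \cdots$, a sequence of finite Borel partitions with $\text{diam}(\alpha_1) < \delta$, $\text{diam}(\alpha_i) \to 0$, and $\widehat{\bigvee_{i=1}^\infty \alpha_i} = \mathcal{B}_\mu \pmod{\mu}$. Setting $W_i := \alpha_i$ immediately gives (1). For the shifts take $k_1 = 0$ and let $\{k_i\}_{i \geq 2}$ be any strictly increasing sequence; the specific choice is immaterial for the generating property, since
\[
\widehat{\mathcal{P}^T} \;=\; \widehat{\bigvee_{i \geq 1,\, n \in \mathbb{Z}} T^{-(n+k_i)}W_i} \;=\; \widehat{\bigvee_{i \geq 1} W_i^T} \;=\; \mathcal{B}_\mu,
\]
so $\mathcal{P}$ is a generating measurable partition regardless of how the $k_i$ are chosen.

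For condition (2), since $P_k \nearrow \mathcal{P}$, monotone convergence of conditional entropy under increasing partitions gives $H_\mu(P_k \mid \mathcal{P}^-) \nearrow H_\mu(\mathcal{P} \mid \mathcal{P}^-)$. For each finite $P_k$, the Kolmogorov--Sinai identity $H_\mu(P_k \mid P_k^-) = h_\mu(T, P_k)$ holds; since $P_k \nearrow \mathcal{P}$ and $\mathcal{P}$ is a generator, $h_\mu(T, P_k) \nearrow h_\mu(T)$, and combining with the sandwich $H_\mu(P_k \mid \mathcal{P}^-) \leq H_\mu(P_k \mid P_k^-)$ yields the measurable-partition version $H_\mu(\mathcal{P} \mid \mathcal{P}^-) = h_\mu(T)$, which is (2). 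For condition (3), observe that $\mathcal{P}^-$ satisfies $T^{-1}\mathcal{P}^- \subseteq \mathcal{P}^-$ and $\bigvee_{n \geq 0} T^n \widehat{\mathcal{P}^-} = \widehat{\mathcal{P}^T} = \mathcal{B}_\mu$, so $\widehat{\mathcal{P}^-}$ is a Rokhlin $\sigma$-algebra for the system, and the classical characterization of the Pinsker $\sigma$-algebra as the tail of any such $\sigma$-algebra yields $\bigcap_{n \geq 0} \widehat{T^{-n}\mathcal{P}^-} = P_\mu(T)$.

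The main obstacle is the rigorous justification of the measurable-partition extensions of both the Kolmogorov--Sinai formula in (2) and the Rokhlin--Sinai theorem in (3). For (3), the inclusion $\bigcap_n \widehat{T^{-n}\mathcal{P}^-} \subseteq P_\mu(T)$ requires approximating any tail-measurable finite partition $\beta$ by $P_k^-$-measurable partitions and showing $h_\mu(T,\beta) = 0$ in the limit; the reverse inclusion $P_\mu(T) \subseteq \bigcap_n \widehat{T^{-n}\mathcal{P}^-}$ follows from the fact that every zero-entropy partition is contained in $\widehat{\mathcal{P}^-}$ combined with the $T$-invariance of $P_\mu(T)$. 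The potentially infinite entropy of $\mathcal{P}$ means both steps require care, but can be handled uniformly by working with the finite truncations $P_k$ and passing to the limit.
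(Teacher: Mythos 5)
The paper itself does not prove Lemma \ref{lem3.5}; it quotes it from \cite{Huang2008CMP}, which in turn rests on Lemma 4 of \cite{Blanchard2002}. Your proposal must therefore stand on its own, and it does not: the central claim that ``the specific choice [of $k_i$] is immaterial'' is wrong, and the two general theorems you invoke in its place do not hold in the generality you need.

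Concerning condition (2), your argument establishes only one inequality. Monotone convergence gives $H_\mu(P_k\mid\mathcal{P}^-)\nearrow H_\mu(\mathcal{P}\mid\mathcal{P}^-)$, and the sandwich $H_\mu(P_k\mid\mathcal{P}^-)\le H_\mu(P_k\mid P_k^-)=h_\mu(T,P_k)\to h_\mu(T)$ gives $H_\mu(\mathcal{P}\mid\mathcal{P}^-)\le h_\mu(T)$. Nowhere do you obtain the reverse inequality $H_\mu(\mathcal{P}\mid\mathcal{P}^-)\ge h_\mu(T)$, and there is no general theorem that delivers it. The ``measurable-partition Kolmogorov--Sinai'' statement you appeal to --- that a generating measurable partition with finite conditional entropy satisfies $H_\mu(\mathcal{P}\mid\mathcal{P}^-)=h_\mu(T)$ --- is false without additional hypotheses. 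For instance, setting $\mathcal{F}=\widehat{\mathcal{P}^-}$, one always has $T^{-1}\mathcal{F}\subseteq\mathcal{F}$ and $\bigvee_n T^n\mathcal{F}=\mathcal{B}_\mu$, but the increment entropy $H_\mu(T\mathcal{F}\mid\mathcal{F})=H_\mu(\mathcal{P}\mid\mathcal{P}^-)$ can take any value between $0$ (when $\mathcal{F}=\mathcal{B}_\mu$) and $h_\mu(T)$; the generating property alone pins down nothing. The purpose of choosing the $k_i$ recursively and rapidly increasing in \cite{Blanchard2002} is precisely to force the far-past fine partitions $T^{-(n+k_i)}W_i$ ($i>k$, $n\ge 1$) to carry essentially no information about $P_k$ beyond what $P_k^-$ already carries, so that $H_\mu(P_k\mid\mathcal{P}^-)$ stays close to $h_\mu(T,P_k)$ from \emph{below}. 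If the $k_i$ grow too slowly relative to the fineness of the $W_i$, this lower bound fails. This recursive step is the mathematical content of the lemma, and your proposal omits it.

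Concerning condition (3), you invoke ``the classical characterization of the Pinsker $\sigma$-algebra as the tail of any such $\sigma$-algebra.'' No such characterization exists. The Rokhlin--Sinai theorem asserts the \emph{existence} of a sub-$\sigma$-algebra $\mathcal{F}$ that is sub-invariant, exhaustive, and whose tail is $P_\mu(T)$; it does not say that every sub-invariant exhaustive $\mathcal{F}$ has Pinsker tail (again $\mathcal{F}=\mathcal{B}_\mu$ is a counterexample). The version of the theorem that does apply requires the excellence hypothesis $H_\mu(T\mathcal{F}\mid\mathcal{F})=h_\mu(T)$, which is exactly condition (2), so your route to (3) presupposes the very estimate whose proof you left incomplete. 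You acknowledge this gap in your last paragraph but then wave it away with ``can be handled uniformly by working with the finite truncations.'' It cannot: the truncations satisfy only the upper bound, and passing to the limit does not create the missing lower bound. The construction of the $k_i$ is where the lower bound comes from, and that construction is not present in your proposal.
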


\begin{lemma}
Let $(X,T)$ be a TDS, $K$ be a closed subset of $X$, $\mathcal{U}\in \mathcal{C}_X^o$ and $f\in C(X,\mathbb(R))$. Then for each $n\in \mathbb{N}$,
\begin{equation*}
P_n(T,f,\mathcal{U},T^{-n}K)=P_n(T,f\circ T^{-n},T^n\mathcal{U},K).
\end{equation*}
\end{lemma}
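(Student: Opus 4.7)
The identity is essentially a change of variables along the homeomorphism $T^n$, which carries $T^{-n}K$ bijectively onto $K$. The plan is to unpack both sides of the equation, set up this bijection at three levels (points, Birkhoff sums, open covers refining the relevant $(n-1)$-fold joins), and check that under this dictionary every summand in the first infimum matches a summand in the second.

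First I will rewrite the ingredients of the right-hand side. Since $T$ is a homeomorphism, $T^n\mathcal{U}\in\mathcal{C}_X^o$, and
\[
(T^n\mathcal{U})_0^{n-1}=\bigvee_{i=0}^{n-1}T^{-i}(T^n\mathcal{U})=\bigvee_{i=0}^{n-1}T^{n-i}\mathcal{U}=T^n\Big(\bigvee_{i=0}^{n-1}T^{-i}\mathcal{U}\Big)=T^n\,\mathcal{U}_0^{n-1}.
\]
Hence the map $\mathcal{V}\mapsto T^n\mathcal{V}$ is a bijection from $\{\mathcal{V}\in\mathcal{C}_X:\mathcal{V}\succeq\mathcal{U}_0^{n-1}\}$ onto $\{\mathcal{V}'\in\mathcal{C}_X:\mathcal{V}'\succeq(T^n\mathcal{U})_0^{n-1}\}$. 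At the level of Birkhoff sums, for every $y\in X$ one computes
\[
(f\circ T^{-n})_n(y)=\sum_{j=0}^{n-1}f(T^{j-n}y)=\sum_{j=0}^{n-1}f(T^j(T^{-n}y))=f_n(T^{-n}y),
\]
so setting $y=T^nx$ gives $(f\circ T^{-n})_n(T^nx)=f_n(x)$.

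Next I will match the suprema summand by summand. For any $\mathcal{V}\succeq \mathcal{U}_0^{n-1}$ and any $V\in\mathcal{V}$, let $V'=T^nV\in T^n\mathcal{V}$. Since $T$ is a homeomorphism,
\[
T^n(V\cap T^{-n}K)=T^nV\cap K=V'\cap K,
\]
and by the previous identity on Birkhoff sums,
\[
\sup_{x\in V\cap T^{-n}K}\exp f_n(x)=\sup_{y\in V'\cap K}\exp(f\circ T^{-n})_n(y).
\]
Summing over $V\in\mathcal{V}$ and using the bijection $\mathcal{V}\leftrightarrow T^n\mathcal{V}$ between admissible covers, taking infima on both sides yields the claimed equality.

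There is no real obstacle: the argument is a bookkeeping exercise that crucially uses invertibility of $T$ so that $T^n$ permutes covers and closed sets while preserving incidence relations. The one place to be slightly careful is the identification $(T^n\mathcal{U})_0^{n-1}=T^n\mathcal{U}_0^{n-1}$, which is what makes the bijection on covers work; this is the only nontrivial step, and it is immediate from the definition of the join once the exponents are lined up as above.
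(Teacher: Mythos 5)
Your proof is correct and follows essentially the same route as the paper: both arguments use the homeomorphism $T^n$ to set up a bijection $\mathcal{V}\leftrightarrow T^n\mathcal{V}$ between covers refining $\mathcal{U}_0^{n-1}$ and covers refining $(T^n\mathcal{U})_0^{n-1}$, together with the identity $(f\circ T^{-n})_n(T^nx)=f_n(x)$ to match the suprema term by term. The paper writes the change of cover in the other direction ($\mathcal{V}\mapsto T^{-n}\mathcal{V}$) and is terser, but the substance is identical; your version simply spells out the bookkeeping more explicitly.
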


\begin{proof}
For each $\mathcal{V}\in \mathcal{C}_X$ and $\mathcal{V}\succeq \bigvee_{i=1}^n T^i\mathcal{U}$, obviously, $T^{-n}\mathcal{V}\in \mathcal{C}_x$ and $T^{-n}\mathcal{V}\succeq \bigvee_{i=0}^{n-1}T^{-i}\mathcal{U}$.

Since for each $V\in \mathcal{V}$,
$$\sup_{x\in T^{-n}V\cap T^{-n}K}\exp f_n(x)=\sup_{x\in V\cap K}\exp f_n(T^{-n}x),$$
 it is easy to see that $P_n(T,f,\mathcal{U},T^{-n}K)\leq P_n(T,f\circ T^{-n},T^n\mathcal{U},K)$. From the homeomorphism of $T$, the inverse inequality holds. Then
$P_n(T,f,\mathcal{U},T^{-n}K)=P_n(T,f\circ T^{-n},T^n\mathcal{U},K).$
\end{proof}

Now we proceed to prove the following theorem which clearly implies our main result.

\begin{theorem}\label{th3.7}
Let $(X,T)$ be a TDS, $f\in C(X,\mathbb{R})$ and $\mu \in \mathcal{M}^e(X,T)$ with $h_{\mu}(T)>0$. Then for $\mu$-a.e. $x\in X$, $\lim\limits_{\epsilon\rightarrow 0}P_s(T,f,x,\epsilon)\geq P_{\mu}(T,f)$.
\end{theorem}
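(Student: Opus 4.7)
Fix $\epsilon>0$ and $\eta>0$. Put $\delta:=\epsilon/4$ and apply Lemma~\ref{lem3.5} to $\mu$ to produce partitions $W_1,W_2,\ldots$ with $\mathrm{diam}(W_1)<\delta$, integers $0=k_1<k_2<\cdots$, $P_k=\bigvee_{i=1}^k T^{-k_i}W_i$ and $\mathcal{P}=\bigvee_k P_k$. Choose $k$ large enough that $H_\mu(P_k\mid\mathcal{P}^-)>h_\mu(T)-\eta$ and set $\alpha:=P_k$; since $k_1=0$, $\mathrm{diam}(\alpha)<\delta$. The key geometric observation is that any $z\in\mathcal{P}^-(y)$ satisfies $d(T^m z,T^m y)<\delta$ for all $m\geq 1$, so after the harmless enlargement $\mathcal{P}^{o}:=\mathcal{P}\vee\mathcal{P}^-$ (controlling the $m=0$ coordinate) one has $\mathcal{P}^{o}(y)\subseteq W^s_\epsilon(y,T)$. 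Consequently $T^{-n}\mathcal{P}^o(x)\subseteq T^{-n}W^s_\epsilon(x,T)$, and it suffices to produce many $(n,\delta)$-separated points with large $f_n$-weight inside this preimage.

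\textbf{Counting and weighting.} For each $n\geq 1$ and each atom $A$ of $\alpha_0^{n-1}$ meeting $T^{-n}\mathcal{P}^o(x)$, pick a representative $y_A\in A\cap T^{-n}\mathcal{P}^o(x)$; then $y_A\in T^{-n}W^s_\epsilon(x,T)$. After a routine refinement losing only $o(n)$ in entropy (for instance passing to an open cover $\mathcal{U}$ with $\mathrm{diam}(\mathcal{U})<\delta$ refined by $\alpha$ and invoking Lemma~\ref{lem3.1}), $\{y_A\}$ may be taken $(n,\delta)$-separated in $T^{-n}W^s_\epsilon(x,T)$. Disintegrate $\mu=\int \mu_y^{\mathcal{P}^o}\,d\mu(y)$ over $\mathcal{P}^o$ and assign to each $A$ the normalised weight $p_A:=\mu_x^{\mathcal{P}^o}(T^n A\cap\mathcal{P}^o(x))$.

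\textbf{Pressure via Lemma~\ref{4.1}.} Lemma~\ref{4.1} gives
\begin{equation*}
\log \sum_A \exp f_n(y_A)\;\geq\; \sum_A p_A f_n(y_A)\;-\;\sum_A p_A\log p_A.
\end{equation*}
The entropy term, by Lemma~\ref{lem2.2}(a), the $\mu$-invariance of $T$, the Pinsker identity in Lemma~\ref{lem3.5}(iii) and the choice of $k$, admits (via a Shannon--McMillan-type argument along $\mathcal{P}^o$-fibres together with Lemma~\ref{4.2}) the pointwise lower bound $\geq n(h_\mu(T)-2\eta)$ for $\mu$-a.e.\ $x$ and large $n$. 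The weighted sum equals, up to an error bounded by $n\,\omega_f(\mathrm{diam}(\alpha))$ (where $\omega_f$ is the modulus of continuity of $f$), the quantity $\int f_n(T^{-n}z)\,d\mu_x^{\mathcal{P}^o}(z)=\sum_{j=1}^n\mathbb{E}_\mu(f\circ T^{-j}\mid\mathcal{P}^o)(x)$, and a Birkhoff-type argument for $T^{-1}$ (using that $T^{-n}\mathcal{P}^o$ decreases to $P_\mu(T)$) delivers the a.e.\ bound $\geq n(\int f\,d\mu-\eta)$.

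\textbf{Conclusion and main obstacle.} Combining the two bounds,
\begin{equation*}
P_n(T,f,\delta,T^{-n}W^s_\epsilon(x,T))\;\geq\;\sum_A\exp f_n(y_A)\;\geq\;\exp\bigl(n(P_\mu(T,f)-3\eta)\bigr)
\end{equation*}
for $\mu$-a.e.\ $x$ and large $n$. Taking $\limsup_n$, letting $\delta\to0$ via Lemma~\ref{lem3.1}, and sweeping $\eta$ along a countable sequence $\eta\downarrow 0$ gives $P_s(T,f,x,\epsilon)\geq P_\mu(T,f)$ a.e., and intersecting over a countable dense set of $\epsilon$ together with the monotonicity of $\epsilon\mapsto P_s(T,f,x,\epsilon)$ concludes. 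The main technical obstacle is establishing the two pointwise limits (the conditional Shannon--McMillan estimate and the Birkhoff theorem for the pre-orbit $f$-averages): the sigma-algebra $\mathcal{P}^o$ is not itself $T$-invariant but only $T^{-1}$-decreasing, so one must extract the asymptotics from the intersection $\bigcap_n\widehat{T^{-n}\mathcal{P}^-}=P_\mu(T)$ in Lemma~\ref{lem3.5}(iii), together with standard martingale-convergence arguments.
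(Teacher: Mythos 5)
Your set-up is sound (Lemma~\ref{lem3.5} to build the partitions, disintegration of $\mu$ over a $\sigma$-algebra contained in the stable-set $\sigma$-algebra, Lemma~\ref{4.1} to weight the separated points), but the proposal has a genuine gap at precisely the place you flag as the ``main technical obstacle.'' You reduce the theorem to two \emph{pointwise} a.e.\ limits --- a conditional Shannon--McMillan estimate for $\frac{1}{n}H_{\mu_x^{\mathcal{P}^o}}(\alpha_0^{n-1})$ and a Birkhoff-type estimate for $\frac{1}{n}\sum_{j=1}^n\mathbb{E}_\mu(f\circ T^{-j}\mid\mathcal{P}^o)(x)$ --- and then leave both unproved. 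They are not routine: $\mathcal{P}^o$ is only decreasing under $T^{-1}$, not invariant, so neither the conditional SMB theorem nor the ergodic theorem applies directly, and martingale convergence along $T^{-n}\mathcal{P}^-\downarrow P_\mu(T)$ alone does not produce the Ces\`aro averages you need at the right time-scale. As written, the ``conclusion'' step presupposes facts that are essentially the content of the theorem.

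The paper avoids pointwise convergence entirely by a subinvariance-plus-ergodicity trick, and this is the idea you are missing. Define $F_n(x)=\frac{1}{n}\log P_n\bigl(T,f\circ T^{-n},T^n\mathcal{U}_k,W^s_\epsilon(x,T)\bigr)$ (equivalently, by the preceding lemma, $\frac{1}{n}\log P_n(T,f,\mathcal{U}_k,T^{-n}W^s_\epsilon(x,T))$), and let $F=\limsup_n F_n$. Using $TW^s_\epsilon(x,T)\subseteq W^s_\epsilon(Tx,T)$ one gets $F(x)\leq F(Tx)$; since $\mu$ is $T$-invariant this forces $F=F\circ T$ a.e., and ergodicity makes $F$ a.e.\ equal to a constant $a_k$. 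One then bounds $a_k=\int F\,d\mu\geq\limsup_n\int F_n\,d\mu$ (reverse Fatou, legitimate since the $F_n$ are uniformly bounded), and the \emph{integral} $\int F_n\,d\mu$ is computed \emph{exactly} via the disintegration: $\int H_{\mu_x}(\cdot)\,d\mu=H_\mu(\cdot\mid\mathcal{P}^-)$ by Lemma~\ref{lem2.2}(a), and $\frac{1}{n}\int\!\!\int f_n\circ T^{-n}\,d\mu_x\,d\mu=\int f\,d\mu$ by $T$-invariance. No pointwise SMB or Birkhoff statement is needed; the a.e.\ constancy of $F$ is what converts the $L^1$ lower bound into the pointwise one. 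I recommend you rebuild your argument around this subinvariance observation rather than trying to force the two pointwise limits through.
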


\begin{proof}
It suffice to prove that for a given $\epsilon >0$, $P_s(T,f,x,\epsilon)\geq P_{\mu}(T,f)$ for $\mu$-a.e. $x\in X$.

Fix $\epsilon>0$. Since $T$ is homeomorphism on $X$, there exists $\delta \in(0,\epsilon)$ such that $d(T^{-1}x,T^{-1}y)<\epsilon$ when $d(x,y)<\delta$. By Lemma \ref{lem3.5}, there exists $\{P_i\}_{i=1}^{\infty}\subset \mathcal{P}_X$ satisfying that $\text{diam}(P_1)\leq \delta$, $\bigcap_{n=0}^{\infty}\widehat{T^{-n}\mathcal{P}^-}=P_{\mu}(T)$ and $H_{\mu}(P_k\mid \mathcal{P}^-)\rightarrow h_{\mu}(T)$ when $k\rightarrow +\infty$, where $\mathcal{P}=\bigvee_{i=1}^{\infty}P_i$. Since $\text{diam}(P_1)\leq \delta$, it is clear that $\mathcal{P}^-(x)\subseteq W^s_{\epsilon}(x,T) $ for each $x\in X$.

Let $\mu =\int_X\mu_xd\mu(x)$ be the disintegration of $\mu $ over $\mathcal{P}^-$. Then
\begin{equation*}
\text{supp}(\mu_x)\subseteq \overline{\mathcal{P}^-(x)}\subseteq W^s_{\epsilon}(x,T) \,\, \text{for }
\mu \text{-a.e. } \,\,x\in X.
\end{equation*}

Let $k\in \mathbb{N}$. By the inequality of (3.3) in \cite{Huang2008CMP}, we know that  there exists $\mathcal{U}_k\in \mathcal{C}_X^o$ such that
\begin{equation}\label{eq3.3}
\limsup_{n\rightarrow +\infty} \frac{1}{n}H_{\mu}\big( \bigvee_{i=0}^{n-1}T^{-i}\mathcal{U}_k\mid T^{-n}\mathcal{P}^-\big) \geq H_{\mu}(P_k\mid \mathcal{P}^-)-\frac{1}{k}.
\end{equation}

For $n\in \mathbb{N}$, let $F_n(x)=\frac{1}{n}\log P_n(T, f\circ T^{-n}, T^n\mathcal{U}_k, W^s_{\epsilon}(x,T))$. By Lemma \ref{lem3.4}, we get that $F_n$ is u.s.c.. Moreover, it is a Borel measurable bounded function. Let $F(x)=\limsup_{n\rightarrow +\infty} F_n(x)$ for $x\in X$. Then $F$ is Borel measurable map. Since $TW^s_{\epsilon}(x,T)\subseteq W^s_{\epsilon}(Tx,T)$ for each $x\in X$, we have
\begin{align*}
 &P_n(T, f\circ T^{-n}, T^n\mathcal{U}_k, W^s_{\epsilon}(x,T))\\
\leq &\inf\{\sum_{V\in \mathcal{V}}\sup_{y\in V\cap TW^s_{\epsilon}(x,T)}\exp f_n\circ T^{-(n+1)}(x):\mathcal{V}\in \mathcal{C}_X \,\, \text{and }\,\, \mathcal{V}\succeq \bigvee_{i=2}^{n+1}T^i\mathcal{U}_k\}\\
\leq &\inf\{\sum_{V\in \mathcal{V}}\sup_{y\in V\cap W^s_{\epsilon}(Tx,T)}\exp f_n\circ T^{-(n+1)}(x):\mathcal{V}\in \mathcal{C}_X \,\, \text{and }\,\, \mathcal{V}\succeq \bigvee_{i=1}^{n+1}T^i\mathcal{U}_k\}\\
=&P_{n+1}(T,f\circ T^{-(n+1)}, T^{n+1}\mathcal{U}_k, W^s_{\epsilon}(Tx,T)).
\end{align*}
Then
\begin{align*}
 F(x)&=\limsup_{n\rightarrow +\infty}\frac{1}{n}\log P_n(T, f\circ T^{-n}, T^n\mathcal{U}_k, W^s_{\epsilon}(x,T))\\
 &\leq \limsup_{n\rightarrow +\infty}\frac{n+1}{n}\cdot\frac{1}{n+1}\log P_{n+1}(T,f\circ T^{-(n+1)}, T^{n+1}\mathcal{U}_k, W^s_{\epsilon}(Tx,T))\\
 &=F(Tx).
\end{align*}
That is $F(x)\leq F(Tx)$ for each $x\in X$. Since $\mu \in \mathcal{M}(X,T)$, $\int_XF(Tx)d\mu(x)=\int_X F(x)d\mu(x)$, then for $\mu$-a.e. $x\in X$, $F(Tx)=F(x)$. Moreover, $F(x)\equiv a_k$ for $\mu$-a.e. $x\in X$ as $\mu $ is ergodic, where $a_k\geq 0$ is a constant.

From Lemma \ref{sc2.1}, there exists a finite partition $\beta \in \mathcal{P}^*(\bigvee_{i=1}^nT^i\mathcal{U}_k)$ such that
\begin{equation*}
P_n(T, f\circ T^{-n}, T^n\mathcal{U}_k, W^s_{\epsilon}(x,T))
=\sum_{B\in \beta}\sup_{x\in B\cap W^s_{\epsilon}(x,T)}\exp f_n\circ T^{-n}(x).
\end{equation*}
It follows from Lemma \ref{4.1} that
\begin{align*}
&\log P_n(T, f\circ T^{-n}, T^n\mathcal{U}_k, W^s_{\epsilon}(x,T))\\
=&\log \sum_{B\in \beta}\sup_{x\in B\cap W^s_{\epsilon}(x,T)}\exp f_n\circ T^{-n}(x)\\
\geq &\sum_{B\in \beta}\mu_x(B\cap W^s_{\epsilon}(x,T))\big( \sup_{x\in B\cap W^s_{\epsilon}(x,T)}\exp f_n\circ T^{-n}(x)-\log \mu_x(B\cap W^s_{\epsilon}(x,T)) \big)\\
=&H_{\mu_x}(\beta)+\sum_{B\in \beta}\sup_{x\in B\cap W^s_{\epsilon}(x,T)} f_n\circ T^{-n}(x)\cdot \mu_x(B)  \\
 &\quad \quad  \quad \quad  \quad \quad \quad  \quad \quad \quad \quad \,\,(\text{supp}(\mu_x)\subseteq W^s_{\epsilon}(x,T) \,\, \text{for}\,\,\mu\text{-a.e.}\,\, x\in X )                      \\
\geq &H_{\mu_x}(\bigvee_{i=1}^nT^i\mathcal{U}_k)+\int_Xf_n\circ T^{-n}d\mu_x.
\end{align*}
Then
\begin{align*}
&a_k=\int_XF(x)d\mu=\int_X\limsup_{n\rightarrow +\infty}F_n(x)d\mu\geq \limsup_{n\rightarrow +\infty}\int_XF_n(x)d\mu\\
&\geq \limsup_{n\rightarrow +\infty}\int_X\frac{1}{n}\big( H_{\mu_x}(\bigvee_{i=1}^nT^i\mathcal{U}_k)+\int f_n\circ T^{-n}d\mu_x \big)d\mu(x)\\
&=\limsup_{n\rightarrow +\infty}\big( \int_X\frac{1}{n}H_{\mu_x}(\bigvee_{i=1}^nT^i\mathcal{U}_k)d\mu(x) + \frac{1}{n}\int_X  \int f_n\circ T^{-n}d\mu_x  d\mu(x)  \big)\\
&=\limsup_{n\rightarrow +\infty}\big( \int_X \frac{1}{n} H_{\mu_x}(\bigvee_{i=1}^nT^i\mathcal{U}_k)d\mu(x) + \frac{1}{n}\int_X f_n\circ T^{-n} d\mu(x)\\
&=\limsup_{n\rightarrow +\infty}\int_X \frac{1}{n} H_{\mu_x}(\bigvee_{i=1}^nT^i\mathcal{U}_k)d\mu(x)+\int_Xf d\mu(x) \,\,(\text{since}\mu \in \mathcal{M}(X,T)) \\
&=\limsup_{n\rightarrow +\infty}\frac{1}{n} H_{\mu}(\bigvee_{i=1}^nT^i\mathcal{U}_k\mid \mathcal{P}^-)+\int_Xf d\mu(x) \,\,(\text{by Lemma}\,\, \ref{lem2.2}(a))\\
&=\limsup_{n\rightarrow +\infty}\frac{1}{n}H_{\mu}(\bigvee_{i=1}^{n-1}T^{-i}\mathcal{U}_k\mid T^{-n}\mathcal{P}^-)+\int_Xf d\mu(x)\\
&\geq H_{\mu}(P_k\mid \mathcal{P}^-)-\frac{1}{k}+\int_Xf d\mu(x) \,\, (\text{by the inequality }\,\,\eqref{eq3.3}).
\end{align*}
Since $P_s(T,f,x,\epsilon)\geq F(x)$ for each $x\in X$, then
\begin{align*}
P_s(T,f,x,\epsilon)&\geq \lim_{k\rightarrow +\infty}\big( H_{\mu}(P_k\mid \mathcal{P}^-)-\frac{1}{k}+\int_Xf d\mu(x)   \big)\\
&=h_{\mu}(T)+\int_Xf d\mu(x)=P_{\mu}(T,f)
\end{align*}
for $\mu$-a.e. $x\in X$.
\end{proof}

We introduce the $\epsilon$-pressure point and pressure point for a TDS. Let $(X,T)$ be a TDS, $f\in C(X,\mathbb{R})$. For $\epsilon >0$, we call $x\in X$  an $\epsilon$\emph{-pressure point} for $T$ if $P_s(T,f,x,\epsilon)=P(T,f)$; and \emph{pressure point} if $\lim\limits_{\epsilon\rightarrow o}P_s(T,f,x,\epsilon)=P(T,f)$. The function $P_s(T,f,x,\epsilon)$ is decreasing in $\epsilon$. It follows that every pressure point is also an $\epsilon$-pressure point for each $\epsilon>0$. Note that while the notion of $\epsilon$-pressure point depends on the choice of the metric, that of pressure point does not. Denote by $\mathscr{P}(T,f)$ the set of all pressure point of $(X,T)$ for $f\in C(X,\mathbb{R})$. For $f\equiv 0$, the $\epsilon$-pressure point and pressure point are the $\epsilon$-entropy point and  entropy point, respectively, which are introduced in \cite{Fiebig2003}. Moreover, $\mathscr{P}(T,0)=\mathcal{E}(T)$, where $\mathcal{E}$ is the set of all entropy points of $(X,T)$.

\begin{remark}
Let $(X,T)$ be a TDS, $f\in C(X,\mathbb{R})$. If there exists $\mu\in \mathcal{M}^e(X,T)$ such that $P(T,f)=P_{\mu}(T,f)$, then $\mathscr{P}(T,f)\neq \emptyset$.
\end{remark}

\section{Stable sets}\label{sec4}

The main results of this section is Theorem \ref{th4.2} and Theorem \ref{4.6}.
Recall that for a TDS $(X,T)$ and $x\in X$,
\begin{align*}
W^s(x,T)&=\{y\in X: \lim_{n\rightarrow +\infty}d(T^nx,T^ny)=0\} \,\,\text{and}\\
W^u(x,T)&=\{y\in X: \lim_{n\rightarrow +\infty}d(T^{-n}x,T^{-n}y)=0\}
\end{align*}
$W^s(x,T)$ is called the \emph{stable set } of $x$ for $T$, and $W^u(x,T)$ is called the \emph{unstable set} of $x$ for $T$. Obviously, $W^s(x,T)=W^u(x,T^{-1})$ and $W^u(x,T)=W^s(x,T^{-1})$.
To show  Theorem \ref{th4.2}, we need the following lemma.

\begin{lemma}\label{lem4.1}
Let $G:X\rightarrow \mathcal{K}(X)$ be a set-valued measurable map, where $\mathcal{K}(X)$ is the family of all closed subset of $X$ endowed with the Hausdorff metric, $f\in C(X,\mathbb{R})$ and $\mathcal{U}\in \mathcal{C}_X^o$. Then
\begin{equation*}
F: x\rightarrow \inf\{\sum_{V\in \mathcal{V}}\sup_{y\in V\cap G(x)}f(y): \mathcal{V}\in \mathcal{C}_X \,\, \text{and}\,\, \mathcal{V}\succeq \mathcal{U} \}
\end{equation*}
is a Borel measurable map.
\end{lemma}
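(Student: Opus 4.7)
The plan is to reduce Lemma \ref{lem4.1} to Lemma \ref{lem3.41} via a clean composition argument. Define the auxiliary map
\begin{equation*}
\widetilde{F}: \mathcal{K}(X) \to \mathbb{R}, \qquad \widetilde{F}(K) = \inf\Big\{\sum_{V\in \mathcal{V}} \sup_{y\in V\cap K} f(y): \mathcal{V}\in \mathcal{C}_X,\ \mathcal{V}\succeq \mathcal{U}\Big\},
\end{equation*}
so that the function $F$ in the statement satisfies $F = \widetilde{F}\circ G$. By Lemma \ref{lem3.41}, $\widetilde{F}$ is upper semi-continuous on $(\mathcal{K}(X), H_d)$.

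Next, I would invoke the standard fact that any u.s.c. real-valued function on a metric space is Borel measurable: for each $r\in \mathbb{R}$, the set $\{K\in \mathcal{K}(X): \widetilde{F}(K)\geq r\}$ is closed by the characterization of u.s.c. functions, hence Borel. Consequently $\widetilde{F}^{-1}([r,+\infty)) \in \mathcal{B}_{\mathcal{K}(X)}$ for every $r$, and $\widetilde{F}$ is Borel measurable from $\mathcal{K}(X)$ into $\mathbb{R}$.

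Since $G: X\to \mathcal{K}(X)$ is (Borel) measurable by hypothesis and Borel-measurable maps are stable under composition with Borel-measurable real-valued maps, the composition $F = \widetilde{F}\circ G$ is Borel measurable, which is what is to be shown.

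The only potentially subtle point is the reconciliation between the notion of measurability used for a set-valued map $G:X\to \mathcal{K}(X)$ and Borel measurability with respect to the Hausdorff-metric topology on $\mathcal{K}(X)$. Since $X$ is a compact metric space, $\mathcal{K}(X)$ endowed with $H_d$ is itself a compact (Polish) metric space, so on it the classical notions of measurability (weakly measurable, strongly measurable, Borel measurable) agree by the Kuratowski--Ryll-Nardzewski / Himmelberg equivalence. Thus any reasonable reading of ``set-valued measurable map'' makes the composition step above legitimate, and no further work is required.
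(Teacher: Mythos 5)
Your argument is correct, but it proceeds by a genuinely different route than the paper's. The paper does not compose through $\mathcal{K}(X)$: it first invokes Lemma \ref{sc2.1} to replace the infimum over all refinements of $\mathcal{U}$ by a minimum over the finite family $\mathcal{P}^*(\mathcal{U})$, and then establishes measurability of each summand $x\mapsto\sup_{y\in V\cap G(x)}f(y)$ directly via the Castaing representation theorem, writing $G(x)=\overline{\{\sigma_n(x):n\in\mathbb{N}\}}$ for a countable family of Borel measurable selections $\sigma_n$ of $G$, so that the supremum becomes a countable supremum of Borel functions $f\circ\sigma_n$. You instead factor $F=\widetilde F\circ G$ and appeal to Lemma \ref{lem3.41}, reducing the measurability of $F$ to the upper semicontinuity of $\widetilde F$ on $(\mathcal{K}(X),H_d)$ together with the standard equivalence (valid because $\mathcal{K}(X)$ is a compact metric space, hence Polish) of Effros measurability of the multifunction $G$ with Borel measurability of $G$ as a single-valued map into $(\mathcal{K}(X),H_d)$; your reconciliation of the two notions is correct. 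Your route is shorter, more modular, and avoids the machinery of measurable selections; the paper's route is more self-contained in that it does not lean on Lemma \ref{lem3.41}. One caveat worth flagging: your argument is only as robust as Lemma \ref{lem3.41}, whose proof in the paper rests on the estimate that $H_d(K,K_0)<\delta$ (with $\delta$ tied to a modulus of continuity of $f$) forces $|\sup_{y\in V\cap K}f(y)-\sup_{y\in V\cap K_0}f(y)|\le\epsilon$ for a \emph{fixed} Borel $V$; this is not immediate when $V$ is not closed, since a nearby point of $K_0$ need not lie in $V$. The Castaing route the paper takes for Lemma \ref{lem4.1} is insulated from this subtlety, which may be why the authors chose it here even though they had Lemma \ref{lem3.41} available.
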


\begin{proof}
By Lemma \ref{sc2.1}, we have
\begin{align*}
\inf\{\sum_{V\in \mathcal{V}}\sup_{y\in V\cap G(x)}f(y): \mathcal{V}\in \mathcal{C}_X \,\, \text{and}\,\, \mathcal{V}\succeq \mathcal{U} \}
=\min\sum_{V\in \mathcal{V}}\sup_{y\in V\cap G(x)}f(y): \mathcal{V}\in \mathcal{P}^*(\mathcal{U})\}.
\end{align*}
It is sufficient to prove that for each $\mathcal{V}\in \mathcal{P}^*(\mathcal{U})$, the function
 $H: x\rightarrow \sup\limits_{y\in V\cap G(x)}f(y)$
 is measurable .

Since $G:X\rightarrow \mathcal{K}(X)$ is a set-valued measurable map, by the well-known Castaing representation theorem, there exists a countable family $\{\sigma_n:n\in \mathbb{N}\}$ of measurable selections of $G$ such that $G(x)=\overline{\{\sigma_n(x):n\in \mathbb{N}\}}$  for each $x\in X$. Then $G$ admits a subsequence $\{\sigma_{n_i}:i\in \mathbb{N}\}$ such that $\overline{V}\cap G(x)=\overline{\{\sigma_{n_i}(x):i\in \mathbb{N}\}}$. It follows that $H(x)=\sup_{n\geq 1}f(\sigma_n(x))$ is a Borel measurable function. Then $F$ is a Borel measurable map.
\end{proof}

\begin{theorem}\label{th4.2}
Let $(X,T)$ be a TDS, $f\in C(X,\mathbb{R})$ and $\mu\in \mathcal{M}^e(X,T)$ with $h_{\mu}(T)>0$. Then for $\mu$-a.e. $x\in X$,
\begin{enumerate}
\item
there exists a closed subset $A(x)\subseteq W^s(x,T)$ such that
\begin{equation*}
\lim_{n\rightarrow +\infty} \text{diam}(T^nA(x))=0 \,\, and \,\, P(T^{-1},f,A(x))\geq P_{\mu}(T,f); \end{equation*}\label{001}
\item
there exists a closed subset $B(x)\subseteq W^u(x,T)$ such that
\begin{equation*}
\lim_{n\rightarrow +\infty} \text{diam}(T^{-n}B(x))=0 \,\, and \,\, P(T,f,B(x))\geq P_{\mu}(T,f). \end{equation*}\label{002}
\end{enumerate}
\end{theorem}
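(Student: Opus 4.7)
The plan is to construct $A(x)$ and $B(x)$ from the partition machinery of Lemma \ref{lem3.5}, then upgrade the entropy lower bound of \cite{Huang2008CMP} for these sets to a pressure lower bound using the same log-sum inequality argument used in Theorem \ref{th3.7}. I will describe part \ref{001}; part \ref{002} follows by applying the construction to $T^{-1}$ (so that $W^u(x,T) = W^s(x,T^{-1})$) together with $P_\mu(T^{-1}, f) = P_\mu(T, f)$.

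For the construction, let $\{W_i\}_{i \geq 1}$ and $0 = k_1 < k_2 < \cdots$ be as in Lemma \ref{lem3.5}, and set $\mathcal{P} = \bigvee_{i \geq 1} T^{-k_i} W_i$ and $\mathcal{P}^- = \bigvee_{n \geq 1} T^{-n} \mathcal{P}$. For $y \in \mathcal{P}^-(x)$, any $n \geq 1$ and any $i \geq 1$, the relation $T^{n+k_i} y \in W_i(T^{n+k_i} x)$ forces $d(T^{n+k_i}x, T^{n+k_i}y) \leq \text{diam}(W_i)$; since $\text{diam}(W_i) \to 0$ and $k_i \to \infty$, this yields $\text{diam}(T^n \mathcal{P}^-(x)) \to 0$ and hence $\mathcal{P}^-(x) \subseteq W^s(x,T)$. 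Setting $A(x) = \overline{\mathcal{P}^-(x)}$ and using continuity of $T$, the closed set $A(x)$ still satisfies $\text{diam}(T^n A(x)) \to 0$ and lies in $W^s(x,T)$. Disintegrate $\mu = \int_X \mu_x \, d\mu(x)$ over $\mathcal{P}^-$, so $\text{supp}(\mu_x) \subseteq A(x)$ for $\mu$-a.e. $x$.

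For the pressure estimate, fix $\mathcal{U}_k \in \mathcal{C}_X^o$ satisfying \eqref{eq3.3}. Computing $P_n(T^{-1}, f, \mathcal{U}_k, A(x))$ requires refinements of $\bigvee_{i=0}^{n-1} T^i \mathcal{U}_k$ with Birkhoff sums $g_n(y) := \sum_{i=0}^{n-1} f(T^{-i}y)$. Applying Lemma \ref{sc2.1} to realize the infimum by some $\beta \in \mathcal{P}^*(\bigvee_{i=0}^{n-1} T^i \mathcal{U}_k)$, and then Lemma \ref{4.1} with weights $\mu_x(B)$ and exponents $\sup_{y \in B \cap A(x)} g_n(y)$, yields
\begin{equation*}
\log P_n(T^{-1}, f, \mathcal{U}_k, A(x)) \;\geq\; H_{\mu_x}\!\Bigl(\bigvee_{i=0}^{n-1} T^i \mathcal{U}_k\Bigr) + \int g_n \, d\mu_x,
\end{equation*}
exactly in parallel with the central display in the proof of Theorem \ref{th3.7}. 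Setting $F(x) = \limsup_n \tfrac{1}{n} \log P_n(T^{-1}, f, \mathcal{U}_k, A(x))$, Lemma \ref{lem4.1} makes $F$ Borel, and the inclusion $T A(x) \subseteq A(Tx)$ (which follows from $T \mathcal{P}^-(x) \subseteq \mathcal{P}^-(Tx)$), combined with the standard shift identity relating $P_n(T^{-1}, f, \mathcal{U}, A(x))$ to $P_{n+1}(T^{-1}, f, \mathcal{U}, A(Tx))$, produces $F(x) \leq F(Tx)$; $T$-invariance and ergodicity of $\mu$ then force $F \equiv a_k$ $\mu$-a.e.

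Integrating the display against $\mu$, using Lemma \ref{lem2.2}(a) to rewrite the entropy integral as $H_\mu(\bigvee_{i=0}^{n-1} T^i\mathcal{U}_k \mid \mathcal{P}^-)$, using $T$-invariance of $\mu$ to reduce $\tfrac{1}{n} \int g_n \, d\mu$ to $\int f \, d\mu$, and transporting the conditional entropy into the form bounded by \eqref{eq3.3}, gives $a_k \geq H_\mu(P_k \mid \mathcal{P}^-) - \tfrac{1}{k} + \int f \, d\mu$. Letting $k \to \infty$ and taking a supremum over the covers produces $P(T^{-1}, f, A(x)) \geq P_\mu(T, f)$ for $\mu$-a.e. $x$. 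The main technical obstacle is this final manipulation: the shift discrepancy between the $\mathcal{P}^-$-conditioning appearing naturally and the $T^{-n}\mathcal{P}^-$-conditioning in \eqref{eq3.3} must be controlled, which one does by observing that $T^{-(n-1)}\mathcal{P}^-$ refines $T^{-n}\mathcal{P}^-$ by the single atom $T^{-n}\mathcal{P}$, whose contribution vanishes after dividing by $n$; the bounded Birkhoff-sum error $\|f\|_\infty$ arising from the one-step comparison $F(x) \leq F(Tx)$ is likewise absorbed.
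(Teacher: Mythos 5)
Your argument follows the paper's strategy closely---partition from Lemma~\ref{lem3.5}, disintegration over $\mathcal{P}^-$, Lemma~\ref{sc2.1} plus the log-sum inequality of Lemma~\ref{4.1}, then the subadditivity/shift estimate $F(x)\leq F(Tx)$ and ergodicity---but there is a gap in your choice of the set $A(x)$. You take $A(x)=\overline{\mathcal{P}^-(x)}$; the paper instead takes $A(x)=\bigcap_{i\geq1}\overline{Q_i(x)}$, where $Q_i=\bigvee_{j=1}^iT^{-j}(P_1\vee\cdots\vee P_i)$ is an increasing sequence of \emph{finite} partitions whose join is $\mathcal{P}^-$. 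The two sets need not coincide (closure does not commute with the intersection $\mathcal{P}^-(x)=\bigcap_iQ_i(x)$), and the distinction matters precisely at the point you wave past: to apply Lemma~\ref{lem4.1} and conclude that $F_n$ is Borel, you need $x\mapsto A(x)$ to be a measurable set-valued map into $\mathcal{K}(X)$. For the paper's $A(x)$, this follows because $\{\overline{Q_i(x)}\}_i$ is a decreasing chain of closed sets with each $\{x:\overline{Q_i(x)}\subseteq U\}$ a finite union of atoms, and compactness lets you write $\{x:A(x)\subseteq U\}=\bigcup_n\bigcap_{k\geq n}\{x:\overline{Q_k(x)}\subseteq U\}$. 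For your $\overline{\mathcal{P}^-(x)}$ (an atom of a genuinely infinite measurable partition, then closed) no such argument is given, and measurability is not obvious. Since $\mathrm{supp}(\mu_x)\subseteq\overline{\mathcal{P}^-(x)}\subseteq\bigcap_i\overline{Q_i(x)}$, the pressure estimate itself would go through for either set once measurability is secured, so this is a fixable but real omission: you should either adopt the paper's $A(x)$ or supply a measurability argument for yours.

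Two smaller remarks. First, your observation that the shift comparison $P_n(T^{-1},f,\mathcal{U}_k,A(x))\leq P_{n+1}(T^{-1},f,\mathcal{U}_k,A(Tx))$ incurs a bounded ($\|f\|_\infty$-type) discrepancy that vanishes after dividing by $n$ is in fact more careful than the paper's own display, which silently uses $\sum_{i=1}^nf(T^{-i}y)\leq\sum_{i=0}^nf(T^{-i}y)$ (valid only for $f\geq0$); your remark correctly handles general $f$. Second, your reduction of part (b) to part (a) via $T\mapsto T^{-1}$, $W^u(x,T)=W^s(x,T^{-1})$, and $P_\mu(T^{-1},f)=P_\mu(T,f)$ matches the paper.
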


\begin{proof}
Since $\mu\in \mathcal{M}^e(X,T)$, $P_{\mu}(T^{-1},f)=P_{\mu}(T,f)$ and $W^s(x,T^{-1})=W^u(x,T)$, \ref{001} implies \ref{002}. It remains to prove \ref{001}.

By Lemma \ref{lem3.5}, there exist $\{W_i\}_{i=1}^{\infty}\subset \mathcal{P}_X$ and $0=k_1<k_2<\cdots$ satisfying that
\begin{enumerate}
\item $\text{diam}(W_1)<\delta$ and $\lim\limits_{i\rightarrow +\infty}\text{diam}(W_i)=0$,
\item $\lim\limits_{k\rightarrow +\infty} H_{\mu}(P_k\mid \mathcal{P}^-)=h_{\mu}(T)$, where $P_k=\bigvee_{i=1}^kT^{-k_i}W_i$ and $\mathcal{P}=\bigvee_{k=1}^{\infty}P_k$,
\item $\bigcap_{n=0}^{\infty}\widehat{T^{-n}\mathcal{P}^-}= P_{\mu}(T)$.
\end{enumerate}
Let $Q_i=\bigvee_{j=1}^iT^{-j}(P_1\vee P_2\vee \cdots P_i)$ for $i\in \mathbb{N}$. Then $Q_i\in \mathcal{P}_X$, $Q_1\preceq Q_2\preceq \cdots $ and $\bigvee_{i=1}^{\infty}Q_i=\mathcal{P}^-$.

For $x\in X$, let $A(x)=\bigcap_{i=1}^{\infty}\overline{Q_i(x)}$. Then $A(x)$ is a closed set and $A(x)\supseteq \overline{\mathcal{P}^-(x)}$. The set $A(x)$ also satisfies that $\lim\limits_{n\rightarrow +\infty}\text{diam}(T^nA(x))=0$ and $A(x)\subseteq W^s(x,T)$ (see the proof of Theorem 4.2 \cite{Huang2008CMP} for details).

 Moreover, the set-valued map $A:x\rightarrow A(x)$ is measurable. In fact, for each open set $U$ of $X$,
\begin{equation*}
\{x:\bigcap_{n=1}^{\infty}\overline{Q_i(x)}\subseteq U\}
=\bigcup_{n\geq 1}\bigcap_{k\geq n}\bigcap\{ A\in Q_k: \overline{A}\subseteq U\}
\end{equation*}
 is a Borel set of $X$. Then for  each closed set $V$ of $X$, $\{x:\overline{Q_i(x)}\subseteq X \backslash V\}$ is a Borel set. It follows that $\{x:\overline{Q_i(x)}\cap V \neq \emptyset\}$ is  Borel  and then $A:x\rightarrow A(x)$ is set-valued measurable.

\begin{comment}
For $\epsilon >0$, there exists $r\in \mathbb{N}$ such that $\text{diam}(W_r)\leq \epsilon$. Then $\text{diam}(T^{k_r}P_r)\leq \epsilon$. For $j>0$, since $T^{k_r+j}Q_{r+j}\succeq T^{k_r}P_r $, then
\begin{equation*}
\begin{split}
\text{diam}(T^{k_r+j}\big(\overline{Q_{r+j}(x)}\big))&=\text{diam}\big(\overline{T^{k_r+j}(Q_{r+j}(x))}\big)
=\text{diam}(\overline{(T^{k_r+j}Q_{r+j})(T^{k_r+j}x)})\\
&\leq \text{diam}(\overline{(T^{k_r}P_r )(T^{k_r+j}x)})\leq \epsilon.
\end{split}
\end{equation*}
Moreover, $\text{diam}(T^{k_r+j}A(x))\leq \epsilon$ for $j>0$ as $A(x)=\bigcap_{n=1}^{\infty}\overline{Q_i(x)}$. This implies that $\lim\limits_{n\rightarrow +\infty}\text{diam}(T^nA(x))=0$ and $A(x)\subseteq W^s(x,T)$.
\end{comment}

Let $\mu=\int_X \mu_x d\mu(x)$ be the disintegration of $\mu$ over $\mathcal{P}^-$. Then
\begin{equation}\label{eq4.2}
\text{supp}(\mu_x)\subseteq \overline{\mathcal{P}^-(x)}\subseteq A(x) \,\, \text{for}\,\, \mu \text{-a.e.}\,\, x\in X.
\end{equation}

We now prove that for $\mu$-a.e. $x\in X$, $P(T^{-1},f, A(x))\geq P_{\mu}(T,f)$. Since $\lim\limits_{k\rightarrow +\infty}H_{\mu}(P_k\mid \mathcal{P}^-)=h_{\mu}(T)$, it is sufficient to prove that for each $k\in \mathbb{N}$, $P(T^{-1},f, A(x))\geq H_{\mu}(P_k\mid \mathcal{P}^-)-\frac{1}{k}+\int_X f d\mu(x)$ for $\mu$-a.e. $x\in X$.

For a given $k\in \mathbb{N}$, there exists $\mathcal{U}_k\in \mathcal{C}_X^o$ such that
\begin{equation}\label{eq4.3}
\limsup_{n\rightarrow +\infty}\frac{1}{n}H_{\mu}\big(\bigvee_{i=0}^{n-1}T^{-i}\mathcal{U}_k\mid T^{-n}\mathcal{P}^- \big) \geq H_{\mu}(P_k\mid \mathcal{P}^-)-\frac{1}{k} \,\, \text{for each}\,\, n\in\mathbb{N}
\end{equation}
(see \cite{Huang2008CMP} for details).

Let $F_n(x)=\frac{1}{n}\log P_n(T^{-1},f,\mathcal{U}_k,A(x)) $, where
\begin{align*}
P_n(T^{-1},f,\mathcal{U}_k,A(x))=\inf\{ \sum_{V\in\mathcal{V}}\sup_{y\in V\cap A(x)}\exp &f_n\circ T^{_(n-1)}(y):\\
&\mathcal{V}\in \mathcal{C}_X \,\, \text{and}\,\, \mathcal{V}\succeq \bigvee_{i=0}^{n-1}T^i\mathcal{U}_k \}.
\end{align*}
By the above Lemma \ref{lem4.1}, $F_n$ is a Borel measurable function. Let $F_n(x)=\limsup\limits_{n\rightarrow +\infty}F_n(x)$ for each $x\in X$. Then $F$ is also a Borel measurable function on $X$.

For each $\mathcal{V}\succeq \bigvee_{i=0}^{n-1}T^i\mathcal{U}_k$, $T^{-1}\mathcal{V}\succeq \bigvee_{i=0}^{n-1}T^i\mathcal{U}_k$. Since $T(A(x))\subseteq A(T(x))$ (see the proof of Theorem 4.2 \cite{Huang2008CMP}), for each $V\in \mathcal{V}$,
\begin{align*}
\sup_{y\in T^{-1}V\cap A(x)}\sum_{i=0}^{n-1}&f(T^{-i}y)\leq \sup_{y\in T^{-1}(V\cap A(Tx))}\sum_{i=0}^{n-1}f(T^{-i}y)\\
&=\sup_{y\in V\cap A(Tx)}\sum_{i=1}^n f(T^{-i}y)\leq \sup_{y\in V\cap A(Tx)}\sum_{i=0}^n f(T^{-i}y),
\end{align*}
then it is not hard to see that
\begin{equation*}
P_n(T^{-1},f,\mathcal{U}_k, A(x))\leq P_{n+1}(T^{-1},f,\mathcal{U}_k, A(Tx)).
\end{equation*}
It follows that
\begin{align*}
F(x)&=\limsup_{n\rightarrow +\infty}\frac{1}{n}\log P_n(T^{-1},f,\mathcal{U}_k, A(x))\\
&\leq \limsup_{n\rightarrow +\infty}\frac{n+1}{n}\cdot\frac{1}{n+1}\log P_n(T^{-1},f,\mathcal{U}_k, A(Tx))\\
&= F(Tx).
\end{align*}
That is $F(x)\leq F(Tx)$ for each $x\in X$. Since $\mu \in \mathcal{M}(X,T)$, $\int_X (f(Tx)-f(x))d\mu(x)=0$, then $F(Tx)=F(x)$ for $\mu$-a.e. $x\in X$. From the ergodicity of $\mu$, there exists a constant $a_k\geq 0$ such that $F(x)\equiv a_k$ for $\mu$-a.e. $x\in X$.

By Lemma \ref{sc2.1}, there exists a partition  $\beta\in \mathcal{P}^*(\bigvee_{i=0}^{n-1}T^i\mathcal{U}_k)$ such that for $\mu$-a.e. $x\in X$,
\begin{align*}
&\log P_n(T^{-1},f,\mathcal{U}_k,A(x))\\
&=\log \sum_{B\in \beta}\sup_{y\in B\cap A(x)}\exp \sum_{i=0}^{n-1}f^{-i}(y)\\
&\geq \sum_{B\in \beta}\mu_x(B)(\sup_{y\in B\cap A(x)}\exp \sum_{i=0}^{n-1}f^{-i}(y)-\log \mu_x(B))\\
&\quad\quad \quad\quad\quad\quad \quad\quad\quad\quad\quad\quad \quad\quad\,\,(\text{by \eqref{eq4.2} and Lemma \ref{4.1} })\\
&= H_{\mu_x}(\beta)+\sum_{B\in \beta}\sup_{y\in B\cap A(x)}\exp \sum_{i=0}^{n-1}f^{-i}(y)\cdot \mu_x(B)\\
&\geq H_{\mu_x}(\bigvee_{i=0}^{n-1}T^i\mathcal{U}_k)+\int_X f_n\circ T^{-(n-1)}d\mu_X
\end{align*}
Then
\begin{align*}
&a_k=\int_XF(x)d\mu\\
&=\int_X\limsup_{n\rightarrow +\infty} F_n(x)d\mu(x)
\geq \limsup_{n\rightarrow +\infty} \int_XF_n(x)d\mu(x)\\
&\geq \limsup_{n\rightarrow +\infty} \frac{1}{n} \int_X\big(H_{\mu_x}(\bigvee_{i=0}^{n-1}T^i\mathcal{U}_k) +\int_X f_n\circ T^{-(n-1)}d\mu_x\big)d\mu(x)\\
&=\limsup_{n\rightarrow +\infty} \frac{1}{n}\big( \int_X H_{\mu_x} (\bigvee_{i=0}^{n-1}T^i\mathcal{U}_k)d\mu(x) +\int_X  f_n\circ T^{-(n-1)} d\mu(x) \big)\\
&=\limsup_{n\rightarrow +\infty} \frac{1}{n}\int_X H_{\mu_x} (\bigvee_{i=0}^{n-1}T^i\mathcal{U}_k)d\mu(x)  +   \int_X f d\mu(x) \,\, (\text{since }\mu \in \mathcal{M}(X,T))\\
&=\limsup_{n\rightarrow +\infty} \frac{1}{n} H_{\mu}(\bigvee_{i=0}^{n-1}T^i\mathcal{U}_k\mid \mathcal{P}^-)+ \int_X f d\mu(x)  \,\, (\text{by Lemma } \ref{lem2.2}\,(a)   )\\
&=\limsup_{n\rightarrow +\infty} \frac{1}{n} H_{\mu}(\bigvee_{i=0}^{n-1}T^i\mathcal{U}_k\mid T^{-(n-1)}\mathcal{P}^-)+ \int_X f d\mu(x)\\
&\geq H_{\mu}(P_k\mid \mathcal{P}^-)-\frac{1}{k} + \int_X f d\mu(x) \,\,(\text{by \eqref{eq4.3}}).
\end{align*}
Therefore, for $\mu$-a.e. $x\in X$,
\begin{equation*}
P(T^{-1},f,A(x))\geq P(T^{-1},f,\mathcal{U}_k,A(x))=F(x)\geq H_{\mu}(P_k\mid \mathcal{P}^-)-\frac{1}{k} + \int_X f d\mu(x)
\end{equation*}
for each $k\in \mathbb{N}$.

Then
\begin{align*}
P(T^{-1},f,A(x))&\geq \lim_{n\rightarrow +\infty}( H_{\mu}(P_k\mid \mathcal{P}^-)-\frac{1}{k}) + \int_X f d\mu(x)\\
&=H_{\mu}(T)+\int_X f d\mu(x)=P_{\mu}(T,f).
\end{align*}
We complete the proof of Theorem \ref{th4.2}.
\end{proof}

A direct consequence of Theorem \ref{th4.2} is the following.

\begin{corollary}
Let $(X,T)$ be a TDS, $f\in C(X,\mathbb{R})$. If there exists $\mu \in \mathcal{M}^e(X,T)$ with $P_{\mu}(T,f)=P(T,f)$, then there exists $x\in X$, a closed subset $A(x)\subseteq W^s(x,T)$ and a closed subset $B(x)\subseteq W^u(x,T)$ such that
\begin{enumerate}
\item $\lim\limits_{n\rightarrow +\infty} \text{diam}(T^n A(x))=0$ and $P(T^{-1},f,A(x))=P(T,f)$;
\item $\lim\limits_{n\rightarrow +\infty} \text{diam}(T^{-n} B(x))=0$ and $P(T,f,B(x))=P(T,f)$.
\end{enumerate}
\end{corollary}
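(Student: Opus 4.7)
The plan is to derive the corollary by sandwiching the lower bounds furnished by Theorem \ref{th4.2} between an elementary monotonicity upper bound and the hypothesis $P_\mu(T,f)=P(T,f)$.

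First I would record two elementary facts. From the definition of $P(T,f,K)$ via $(n,\delta)$-separated subsets of $K$, the pressure on a closed subset is monotone: if $K\subseteq L$ are closed in $X$, then $P(T,f,K)\le P(T,f,L)$, and in particular $P(T,f,K)\le P(T,f)$ for every closed $K\subseteq X$; the same inequality applies to $T^{-1}$. Second, by the variational principle together with $\mathcal{M}(X,T)=\mathcal{M}(X,T^{-1})$ and $h_\nu(T)=h_\nu(T^{-1})$, one has $P(T^{-1},f)=P(T,f)$.

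In the principal case $h_\mu(T)>0$, Theorem \ref{th4.2} provides a $\mu$-full-measure set of points $x$ for which closed sets $A(x)\subseteq W^s(x,T)$ and $B(x)\subseteq W^u(x,T)$ simultaneously satisfy the diameter-shrinking conditions and the lower bounds $P(T^{-1},f,A(x))\ge P_\mu(T,f)$ and $P(T,f,B(x))\ge P_\mu(T,f)$. Selecting any $x$ from this non-empty set and chaining the preceding facts,
\[
P(T,f)=P_\mu(T,f)\le P(T^{-1},f,A(x))\le P(T^{-1},f)=P(T,f),
\]
gives equality throughout, and the parallel computation disposes of $B(x)$.

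The only hypothesis-related subtlety is that Theorem \ref{th4.2} requires $h_\mu(T)>0$, so I would handle the zero-entropy case separately: there $P_\mu(T,f)=\int f\,d\mu$, and for any $\mu$-generic $x$ in the sense of Birkhoff I simply take $A(x)=B(x)=\{x\}\subseteq W^s(x,T)\cap W^u(x,T)$; the diameter conditions are trivial and a direct calculation yields $P(T,f,\{x\})=\lim_n \frac{1}{n}f_n(x)=\int f\,d\mu=P(T,f)$, with the analogous statement for $T^{-1}$. Modulo this mild edge case the argument is a one-line sandwich, so the only content beyond Theorem \ref{th4.2} is the reversibility identity $P(T,f)=P(T^{-1},f)$.
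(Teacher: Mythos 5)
Your sandwich argument --- Theorem~\ref{th4.2}'s lower bounds combined with the monotonicity of $P(T,f,\cdot)$ under inclusion of closed sets and the identity $P(T,f)=P(T^{-1},f)$ from the variational principle --- is precisely what the paper means by ``a direct consequence of Theorem~\ref{th4.2},'' so your approach matches. You also correctly notice that Theorem~\ref{th4.2} presupposes $h_\mu(T)>0$ while the corollary's hypothesis does not, and your singleton choice $A(x)=B(x)=\{x\}$ for a point generic under both forward and backward Birkhoff averages cleanly covers that degenerate case, which the paper glosses over.
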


A TDS $(X,T)$ is transitive if for each  pair of non-empty open subsets $U$ and $V$ of $X$, there exists $n\geq 0$ such that $U\cap T^{-n}V\neq \emptyset$; and it is weakly mixing, if $(X\times X, T\times T)$ is transitive. These notions describe the global properties of the whole TDS. Blanchard \emph{et al.} \cite{Blanchard2008} give a new criterion to picture `a certain amount of weakly mixing' in some consistent sense. The notion of the weakly mixing set was introduced as follows.

If $X$, $Y$ are topological spaces, denote by $\mathcal{C}(X,Y)$ the set of all continuous maps from $X$ to $Y$.
\begin{definition}
Let $(X,T)$ be a TDS and $A\in 2^X$. The set $A$  is said to be weakly mixing  for $T$ if there exists $B\subset A$ satisfying
\begin{enumerate}
\item $B$ is the union of countably many Cantor sets;
\item the closure of $B$ equals $A$;
\item for any $C\in B$ and $g\in \mathcal{C}(C,A)$ there exists an increasing sequence of natural numbers $\{n_i\}\subset \mathbb{N}$ such that $\lim_{i\rightarrow +\infty}T^{n_i}x=g(x)$ for any $x\in C$.
\end{enumerate}
\end{definition}

Denote by $WM_s(X,T)$ the family of weakly mixing subsets of $(X,T)$. The system $(X,T)$ itself is called partially mixing if when it contains a weakly mixing set. The whole space $X$ is a weakly mixing set if and only if TDS $(X,T)$ is weakly mixing \cite{Xiong}. The following result (See Prop. 4.2 \cite{Blanchard2008}) give an equivalent characterization of the weakly mixing set in another way.

\begin{proposition}
Let $(X,T)$ be a TDS and $A$ be a non-singleton closed subset of $X$. Then $A$ is a weakly mixing subset of $X$ if and only if for any $k\in \mathbb{N}$, any choice of non-empty open subsets $V_1,\cdots, V_k$ of $A$ and non-empty open subsets $U_1,\cdots, U_k$ of $X$ with $A\cap U_i\neq \emptyset$, $i=1,2,\cdots,k$, there exists $m\in \mathbb{N}$ such that $T^mV_i\cap U_i\neq \emptyset$ for each $1\leq i\leq k$.
\end{proposition}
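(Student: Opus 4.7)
My plan is to prove the two directions separately. The forward direction is an unpacking of the definition, while the backward direction requires a Mycielski/Baire-category style construction.

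For $(\Rightarrow)$, assume $A$ is weakly mixing with witness $B \subseteq A$, a countable union of Cantor sets, dense in $A$. Given $V_1, \ldots, V_k$ open in $A$ and $U_1, \ldots, U_k$ open in $X$ with $A \cap U_i \neq \emptyset$, I first select $y_i \in A \cap U_i$ and, using the density of $B$ in $A$, pairwise distinct points $x_i \in V_i \cap B$. Each $x_i$ lies in some Cantor component of $B$, and inside that component I can choose an arbitrarily small clopen Cantor neighborhood $C_i \subseteq V_i$ of $x_i$; choosing the $C_i$ pairwise disjoint, the finite union $C = C_1 \sqcup \cdots \sqcup C_k$ is again a Cantor set contained in $B$. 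The map sending the clopen piece $C_i$ constantly to $y_i$ is a continuous $g : C \to A$ with $g(x_i) = y_i$. By condition (3) in the definition, there exist $n_j \to +\infty$ with $T^{n_j}z \to g(z)$ for every $z \in C$; specializing to $z = x_i$ gives $T^{n_j}x_i \in U_i$ for all large $j$, while $x_i \in V_i$, so $T^{n_j}V_i \cap U_i \neq \emptyset$.

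For $(\Leftarrow)$, I would invoke the Kuratowski--Mycielski theorem. I first reinterpret the multi-hitting hypothesis as: for each $k \in \mathbb{N}$, the set
$$R_k = \{(\bar x, \bar y) \in A^k \times A^k : \forall \epsilon > 0\ \exists\, m \in \mathbb{N},\ d(T^m x_i, y_i) < \epsilon \text{ for all } i\}$$
is a dense $G_\delta$ in $A^k \times A^k$, density being precisely the hypothesis applied to basic open boxes and $G_\delta$-ness following from a countable intersection over shrinking $\epsilon$ (and over a dense countable set of $m$-choices). The Kuratowski--Mycielski theorem then yields a dense Mycielski set $B \subseteq A$ (a countable union of Cantor sets) such that for every $k$ and every pairwise-distinct $(x_1,\ldots,x_k) \in B^k$ and $(y_1,\ldots,y_k) \in A^k$, one finds $m$ with $T^m x_i$ close to $y_i$. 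From this finite interpolation I upgrade to condition (3) as follows: given a Cantor set $C \subseteq B$ and continuous $g : C \to A$, uniformly approximate $g$ by locally constant maps $g_\ell$ on finer and finer clopen partitions of $C$, apply the finite property at each level to a dense finite transversal of the partition, and extract a diagonal subsequence $\{n_j\}$ with $T^{n_j}x \to g(x)$ pointwise on $C$.

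The main obstacle I anticipate is the second half of the backward direction: producing a \emph{single} $B$ that works simultaneously for all $k$, all Cantor subsets $C \subseteq B$, and all continuous $g$. Verifying residuality of $R_k$ and invoking Mycielski are standard, but the diagonalization from the pointwise finite statement to the full continuous statement must be organized so that one countable family of Cantor pieces in $B$ handles every uniform approximation scheme at once. I expect this diagonalization, rather than any individual ingredient, to be the technical core of the argument; the forward direction and the $G_\delta$-verification are comparatively routine.
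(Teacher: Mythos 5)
The paper does not prove this statement --- it cites it as Prop.~4.2 of \cite{Blanchard2008} --- so there is no in-paper argument to compare against. Your forward direction is essentially correct: the finite disjoint union $C = C_1 \sqcup \cdots \sqcup C_k$ of small clopen Cantor pieces is again a Cantor subset of $B$, the piecewise-constant $g$ is continuous, and specializing condition (3) at the points $x_i$ yields the required $m$. You should just remark that pairwise-distinct $x_i \in V_i \cap B$ exist because $B$, being a countable union of Cantor sets, has no isolated points.

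The backward direction has a genuine gap, and it sits exactly at the step you flag. Kuratowski--Mycielski, applied to your residual sets $R_k$, yields a dense Mycielski set $B$ with the \emph{finite} interpolation property: for every $k$, pairwise-distinct $x_1,\dots,x_k\in B$, targets $y_1,\dots,y_k\in A$, and $\epsilon>0$, some $m$ has $d(T^m x_i,y_i)<\epsilon$ for all $i$. Condition (3), by contrast, asks for a single sequence $n_j$ with $T^{n_j}x\to g(x)$ for \emph{every} point of an uncountable Cantor set $C\subseteq B$. Your plan of approximating $g$ by locally constant maps on clopen partitions of $C$, applying finite interpolation to a transversal at each scale, and diagonalizing does not close this gap: the finite property controls $T^{m_\ell}$ only at the finitely many transversal points, and since $\{T^m\}_{m\geq 1}$ is not equicontinuous nothing bounds $d(T^{m_\ell}x,g(x))$ for the remaining $x$ in the same clopen cell. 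Put differently, what you actually obtain is that $g$ lies in the closure of $\{T^m|_C : m\geq 1\}$ in the product space $X^C$; but $X^C$ is not first countable for uncountable $C$, so a point in that (net) closure need not be a limit of a \emph{sequence}, which is precisely what condition (3) demands.

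The missing idea is to enforce open-set control during the construction of $B$, rather than trying to recover it afterward from a finite-tuple statement. The hypothesis gives $m$ with $T^mV_i\cap U_i\neq\emptyset$; by continuity of $T^m$ this upgrades at no cost to: there exist non-empty relatively open $V_i'\subseteq V_i$ with $T^mV_i'\subseteq U_i$. The nested clopen cells of the Cantor scheme should therefore be built together with the hitting times $m_\ell$, shrinking the cells at stage $\ell$ so that $T^{m_\ell}$ carries each \emph{entire} cell into its small target. Then $T^{m_\ell}|_C\to g$ on all of $C$ for the maps $g$ encoded by the scheme, and a countable bookkeeping of such schemes produces a $B$ satisfying (3) for every Cantor $C\subseteq B$ and every continuous $g:C\to A$. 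This interleaving is what the argument in \cite{Blanchard2008} does, and it is what the off-the-shelf residual-set form of Kuratowski--Mycielski alone cannot deliver.
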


Now we prove the following theorem. The first part \ref{4.6a} of Theorem \ref{4.6} was already proved in \cite{Huang2008CMP}. For completeness, we state it in the theorem.

\begin{theorem}\label{4.6}
Let $(X,T)$ be a TDS and $\mu \in \mathcal{M}^e(X,T)$ with
$h_{\mu}(T)>0$. Then for $\mu$-a.e. $x\in X$, there exists a closed
subset $E(x)\subseteq \overline{W^s(x,T)}\cap \overline{W^u(x,T)}$
such that
\begin{enumerate}
\item
$E(x)\in WM_s(X,T)\cap WM_s(X,T^{-1})$, i.e., $E(x)$ is weakly
mixing for $T$, $T^{-1}$.\label{4.6a}
\item $P(T,f,E(x))\geq P_{\mu}(T,f)$ and $P(T^{-1},f,E(x))\geq P_{\mu}(T,f)$.
\end{enumerate}
\end{theorem}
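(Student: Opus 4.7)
The plan is to mimic closely the proof of Theorem~\ref{th4.2}, but applied to the weakly mixing subset $E(x)$ furnished by part~\ref{4.6a}. Since part~\ref{4.6a} is already proven in~\cite{Huang2008CMP}, I may assume that $E(x)$ is constructed from the partitions in Lemma~\ref{lem3.5} in such a way that $\text{supp}(\mu_x)\subseteq E(x)$ for $\mu$-a.e.\ $x$ (where $\mu=\int_X\mu_xd\mu(x)$ is the disintegration of $\mu$ over the Pinsker $\sigma$-algebra $P_\mu(T)=\widehat{\mathcal{P}^-}^T$), and that the set-valued map $x\mapsto E(x)$ is Borel measurable and satisfies $T(E(x))\subseteq E(Tx)$ (the symmetric inclusion for $T^{-1}$ also holding because $P_\mu(T)=P_\mu(T^{-1})$ and $E(x)\subseteq\overline{W^u(x,T)}$).

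To prove $P(T,f,E(x))\geq P_{\mu}(T,f)$, I first fix $k\in\mathbb{N}$ and invoke inequality~\eqref{eq3.3} to pick $\mathcal{U}_k\in\mathcal{C}_X^o$ with
\begin{equation*}
\limsup_{n\to+\infty}\frac{1}{n}H_{\mu}\bigl(\mathcal{U}_k{}_0^{n-1}\mid T^{-n}\mathcal{P}^-\bigr)\geq H_{\mu}(P_k\mid\mathcal{P}^-)-\frac{1}{k}.
\end{equation*}
Then I set $F_n(x)=\tfrac{1}{n}\log P_n(T,f,\mathcal{U}_k,E(x))$, which is Borel measurable by Lemma~\ref{lem4.1}, and $F(x)=\limsup_{n\to+\infty}F_n(x)$. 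The equivariance $T(E(x))\subseteq E(Tx)$ yields, as in Theorem~\ref{th4.2}, a comparison $P_n(T,f,\mathcal{U}_k,E(x))\leq P_{n+1}(T,f,\mathcal{U}_k,E(Tx))$ up to a harmless shift, giving $F(x)\leq F(Tx)$; ergodicity of $\mu$ then forces $F\equiv a_k$ a.e.\ for some constant $a_k\geq 0$.

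Next I pass to the lower bound. By Lemma~\ref{sc2.1}, for each $n$ there is a partition $\beta\in\mathcal{P}^*(\mathcal{U}_k{}_0^{n-1})$ realizing the infimum in $P_n(T,f,\mathcal{U}_k,E(x))$; the containment $\text{supp}(\mu_x)\subseteq E(x)$ together with Lemma~\ref{4.1} gives
\begin{equation*}
\log P_n(T,f,\mathcal{U}_k,E(x))\geq H_{\mu_x}\bigl(\mathcal{U}_k{}_0^{n-1}\bigr)+\int_X f_n\,d\mu_x.
\end{equation*}
Integrating in $x$, using Fatou's lemma, Lemma~\ref{lem2.2}(a), $T$-invariance of $\mu$, the identity $H_{\mu}(\mathcal{U}_k{}_0^{n-1}\mid\mathcal{P}^-)=H_{\mu}(\mathcal{U}_k{}_0^{n-1}\mid T^{-n}\mathcal{P}^-)$ coming from $T$-invariance of $\mathcal{P}^-$ up to the usual shift, and finally the bound from~\eqref{eq3.3}, I obtain
\begin{equation*}
a_k\geq H_{\mu}(P_k\mid\mathcal{P}^-)-\frac{1}{k}+\int_X f\,d\mu.
\end{equation*}
Since $P(T,f,E(x))\geq P(T,f,\mathcal{U}_k,E(x))\geq F(x)=a_k$, letting $k\to+\infty$ and using $H_{\mu}(P_k\mid\mathcal{P}^-)\to h_{\mu}(T)$ gives $P(T,f,E(x))\geq P_{\mu}(T,f)$ for $\mu$-a.e.\ $x$. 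The estimate for $T^{-1}$ is entirely analogous: $\mu\in\mathcal{M}^e(X,T^{-1})$, $P_\mu(T^{-1},f)=P_\mu(T,f)$, $P_\mu(T)=P_\mu(T^{-1})$, and the symmetric inclusion $T^{-1}(E(x))\subseteq E(T^{-1}x)$ (part of the $WM_s(X,T^{-1})$ clause in~\ref{4.6a}) let me repeat the same argument verbatim with $T$ replaced by $T^{-1}$.

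The main obstacle I expect is verifying the equivariance property $T(E(x))\subseteq E(Tx)$ (and its $T^{-1}$ analogue) together with $\text{supp}(\mu_x)\subseteq E(x)$ for the \emph{same} $E(x)$; this is what makes the monotonicity $F(x)\leq F(Tx)$ and the lower bound step work simultaneously for both $T$ and $T^{-1}$. Once those properties are read off Huang's construction in~\cite{Huang2008CMP}, the remainder is a mechanical transcription of the proof of Theorem~\ref{th4.2}.
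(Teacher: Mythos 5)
Your route reaches the right conclusion but is a genuine departure from the paper's argument, and two of the bookkeeping steps in your write-up are not quite what you need. The paper's proof disintegrates $\mu$ directly over $P_\mu(T)$, sets $E(x)=\mathrm{supp}(\mu_x)$, and then never touches $\mathcal{P}$, the covers $\mathcal{U}_k$, or inequality~\eqref{eq3.3}. Instead it introduces $F_n(x)=H_{\mu_x}(\mathcal{U}_0^{n-1})+\int f_n\,d\mu_x$ (the lower bound on $\log P_n$, not $\log P_n$ itself), verifies $F_{n+m}(x)\le F_n(x)+F_m(T^nx)$ via $T\mu_x=\mu_{T x}$, applies Kingman's subadditive ergodic theorem to get $\lim\frac{1}{n}F_n\equiv a_{\mathcal{U}}$, identifies $a_{\mathcal{U}}=h_\mu(T,\mathcal{U}\mid P_\mu(T))+\int f\,d\mu$, and then uses Lemma~\ref{4.2} to convert this to $P_\mu(T,f,\mathcal{U})$; a sequence of covers with diameters tending to $0$ finishes. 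Your version re-imports the $\{W_i\},\mathcal{P},\mathcal{U}_k$ machinery of Theorem~\ref{th4.2}, applies the $F(x)\le F(Tx)$-plus-ergodicity device to $\log P_n$ itself, and then routes through~\eqref{eq3.3}. That works, but it is a detour: Lemma~\ref{4.2} makes~\eqref{eq3.3} unnecessary here, and the paper's choice of applying Kingman to the explicit subadditive sequence $F_n$ is what makes the limit (rather than a $\limsup$ plus reverse Fatou) come out cleanly.

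Two corrections are needed for your chain to close. First, $P_\mu(T)\neq\widehat{\mathcal{P}^-}^T$; the correct statement, from Lemma~\ref{lem3.5}(c), is $P_\mu(T)=\bigcap_{n\ge 0}\widehat{T^{-n}\mathcal{P}^-}$. Second, since you disintegrate over $P_\mu(T)$, Lemma~\ref{lem2.2}(a) produces $H_\mu(\mathcal{U}_k{}_0^{n-1}\mid P_\mu(T))$, not $H_\mu(\mathcal{U}_k{}_0^{n-1}\mid \mathcal{P}^-)$, so the ``identity'' you invoke is not the relevant step; what actually links your integrated bound to~\eqref{eq3.3} is the \emph{monotonicity} $H_\mu(\,\cdot\mid P_\mu(T))\ge H_\mu(\,\cdot\mid T^{-n}\mathcal{P}^-)$, which holds because $P_\mu(T)\subseteq T^{-n}\mathcal{P}^-$ for every $n$. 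Finally, the equivariance $T^{\pm 1}E(x)\subseteq E(T^{\pm 1}x)$ is not ``part of the $WM_s(X,T^{-1})$ clause''; it comes from $T\mu_x=\mu_{Tx}$ (indeed it gives $TE(x)=E(Tx)$), which the paper deduces from the $T$-invariance of $P_\mu(T)$, and this same identity is what the paper actually uses in place of your $F(x)\le F(Tx)$ step.
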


\begin{proof}
Let $\mathcal{B}_{\mu}$ be the completion of $\mathcal{B}_X$ under $\mu$. Then $(X,\mathcal{B}_{\mu},\mu,T)$ is a Lebesgue system. Let $P_{\mu}(T)$ be the Pinsker $\sigma$-algebra of $(X,\mathcal{B}_{\mu},\mu,T)$. Let $\mu=\int_X\mu_xd\mu(x)$ be the disintegration $\mu$ over $P_{\mu}(T)$. Then for $\mu$-a.e. $x\in X$, supp$(\mu_x)\subseteq \overline{W^s(x,T)}\cap \overline{W^u(x,T)}$ and supp$(\mu_x)\in WM_s(X,T)\cap WM_s(X,T^{-1})$(See Theorem 4.6 in \cite{ Huang2008CMP} for details).

We now prove that for $\mu$-a.e. $x\in X$, $P(T,f,\text{supp}(\mu_x))\geq P_{\mu}(T,f)$
and $P(T^{-1},f, \\
\text{supp}(\mu_x))\geq P_{\mu}(T,f)$. By the symmetry of $T$ and $T^{-1}$, $P_{\mu}(T,f)=P_{\mu}(T^{-1},f)$. It remains to prove that for $\mu$-a.e. $x\in X$, $P(T,f,\text{supp}(\mu_x))\geq P_{\mu}(T,f)$. Since $P_{\mu}(T)$ is $T$-invariant,  $T\mu_x=\mu_{Tx}$ for $\mu$-a.e. $x\in X$, therefore, there exists a $T$-invariant measurable set $X_0\subset X$ with $\mu(X_0)=1$ and $T\mu_x=\mu_{Tx}$ for $x\in X_0$.

For each $\mathcal{U}\in \mathcal{C}^o_X$, $x\in X_0$ and $n\in \mathbb{N}$. By Lemma \ref{sc2.1}, there exists a $\beta \in \mathcal{P}^*(\mathcal{U}^{n-1}_0)$ such that
\begin{align}\label{eq4.1}
&\log P_n(T,f,\mathcal{U},\text{supp}(\mu_x))   \notag\\
&=\log \inf\{\sum_{V\in \mathcal{V}}\sup_{y\in V\cap\text{supp}(\mu_x)}\exp f_n(x): \mathcal{V}\in \mathcal{C}_X \,\, \text{and}\,\, \mathcal{V}\succeq \mathcal{U}^{n-1}_0\}         \notag\\
&=\log \sum_{B\in \beta}\sup_{y\in B\cap \text{supp}(\mu_x)}\exp f_n(x)          \notag\\
&\geq \sum_{B\in \beta} \mu_x(B)\big(\sup_{y\in B\cap \text{supp}(\mu_x)}f_n(x)-\log \mu_x(B) \big)
\quad(\text{by Lemma} \,\,\ref{4.1})           \notag\\
&=H_{\mu_x}(\beta)+\sum_{B\in \beta}\mu_x(B)\cdot\sup_{y\in B\cap \text{supp}(\mu_x)}f_n(x) \notag\\
&\geq H_{\mu_x}(\mathcal{U}^{n-1}_0)+\int_Xf_nd\mu_x
\end{align}

Fix $\mathcal{U}\in \mathcal{C}^o_X$ and $n\in \mathbb{N}$, denote $F_n(x)=H_{\mu_x}(\bigvee\limits_{i=0}^{n-1}T^{-i}\mathcal{U})$ + $\int_Xf_nd\mu_x$ for each $x\in X_0$. Then
\begin{align*}
&F_{n+m}(x)=H_{\mu_x}(\bigvee_{i=0}^{n+m-1}T^{-i}\mathcal{U})+\int_Xf_{n+m}d\mu_x\\
&\leq H_{\mu_x}(\bigvee_{i=0}^{n-1}T^{-i}\mathcal{U})+H_{\mu_x}(T^{-n}\bigvee_{i=0}^{m-1}T^{-i}\mathcal{U}) + \int_Xf_nd\mu_x  +\int_X f_m\circ T^n d\mu_x\\
&\leq F_n(x)+H_{T^n\mu_x}(\bigvee_{i=0}^{m-1}T^{-i}\mathcal{U}) + \int_Xf_m\circ T^n d\mu_x\\
&=F_n(x)+H_{T^n\mu_x}(\bigvee_{i=0}^{m-1}T^{-i}\mathcal{U}) + \int_Xf_m dT^n\mu_x\\
&=F_n(x)+H_{\mu_{T^nx}}(\bigvee_{i=0}^{m-1}T^{-i}\mathcal{U}) + \int_Xf_m d\mu_{T^nx}\\
&=F_n(x)+F_m(T^nx)
\end{align*}
That is, $\{F_n\}$ is subadditive. Since the map $x\rightarrow\mu_x(A)$ for each $A\in \mathcal{B}$ is measurable on $X_0$, it follows that $F_n(x)$ is measurable on $X_0$. By Kingman sub-additive ergodic theorem, $\lim\limits_{n\rightarrow \infty}\frac{1}{n}F_n(x)\equiv a_{\mathcal{U}}$ for $\mu$-a.e. $x\in X$, where $a_{\mathcal{U}}$ is a constant. Then by \eqref{eq4.1}, $P(T,f,\mathcal{U},\text{supp}(\mu_x))\geq a_{\mathcal{U}}$ for each $\mathcal{U}\in \mathcal{C}^0_X$ and $\mu$-a.e. $x\in X$. Therefore
\begin{align*}
a_{\mathcal{U}}&=\int_X\lim_{n\rightarrow \infty}\frac{1}{n}F_n(x) d\mu\\
&=\lim_{n\rightarrow \infty}\frac{1}{n}\int_X F_n(x) d\mu \quad \text{(by Kingman sub-additive ergodic theorem)}\\
&=\lim_{n\rightarrow \infty}\frac{1}{n}\int_X\big(H_{\mu_x}(\mathcal{U}^{n-1}_0) +\int_X f_n d\mu_x \big)d\mu(x)\\
&=\lim_{n\rightarrow \infty}\frac{1}{n}H_{\mu}(\mathcal{U}_0^{n-1}\mid P_{\mu}(T)) + \int_X f d\mu\\
&=h_{\mu}(T,\mathcal{U}\mid P_{\mu}(T)) + \int_X f d\mu\\
&=P_{\mu}(T,f,\mathcal{U}) \quad (\text{by Lemma \ref{4.2}}).
\end{align*}
It follows that $P(T,f,\mathcal{U},\text{supp}(\mu_x))\geq P_{\mu}(T,f,\mathcal{U})$ for each $\mathcal{U}\in \mathcal{C}_X^o$ and $\mu$-a.e. $x\in X$.

Choose a sequence of open cover $\{\mathcal{U}_m\}_{m=1}^{\infty}$ with $\lim\limits \text{diam}\{\mathcal{U}_m\}=0$. Then
\begin{align*}
\lim_{n\rightarrow\infty} P_{\mu}(T,f,\mathcal{U}_m)&=\lim_{n\rightarrow\infty} (h_{\mu}(T,\mathcal{U}_m)+\int_X f d\mu)\\
&=h_{\mu}(T)+\int_Xfd\mu =P_{\mu}(T,f).
\end{align*}
Since for each $m\in \mathbb{N}$ and $\mu$-a.e. $x\in X$, $P(T,f,\mathcal{U}_m,\text{supp}(\mu_x))\geq P_{\mu}(T,f,\mathcal{U}_m)$, we have
\begin{equation*}
P(T,f,\text{supp}(\mu_x))=\sup_{m\in \mathbb{N}}P(T,f,\mathcal{U}_m,\text{supp}(\mu_x))\geq \sup_{m\geq 1}P_{\mu}(T,f,\mathcal{U}_m)=P_{\mu}(T,f)
\end{equation*}
for each $\mu$-a.e. $x\in X$.
\end{proof}

It is not hard to see that the following corollary holds.

\begin{corollary}
Let $(X,T)$ be a TDS and $f\in C(X,\mathbb{R})$. Then
\begin{enumerate}
\item $\sup_{ x\in X}P(T,f,\overline{W^s(x,T)}\cap \overline{W^u(x,T)})=P(T,f)$;
\item If there exists $\mu\in \mathcal{M}^e(X,T)$ with $P_{\mu}(T,f)=P(T,f)$,
then for $\mu$-a.e. $x\in X$, there exists a closed subsets
$E(x)\subseteq \overline{W^s(x,T)}\cap \overline{W^u(x,T)}$ such
that
\begin{enumerate}
\item
$E(x)\in WM_s(X,T)\cap WM_s(X,T^{-1})$ and
\item $P(T,f,E(x))=P(T^{-1},f,E(x))=P(T,f)$.
\end{enumerate}
\end{enumerate}
\end{corollary}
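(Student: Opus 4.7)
The plan is to derive both claims as direct consequences of Theorem \ref{4.6}, combining it with the variational principle $P(T,f)=\sup_{\mu\in\mathcal{M}^e(X,T)}P_\mu(T,f)$ and the monotonicity of topological pressure under inclusion of closed subsets (the latter being immediate from the definition via $(n,\delta)$-separated sets).

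For statement (1), the upper bound $\sup_{x\in X}P(T,f,\overline{W^s(x,T)}\cap \overline{W^u(x,T)})\leq P(T,f)$ is immediate from monotonicity, since each of the sets in question is a closed subset of $X$. For the lower bound I fix $\eta>0$ and use the variational principle to pick an ergodic $\mu$ with $P_\mu(T,f)>P(T,f)-\eta$. In the main case $h_\mu(T)>0$, Theorem \ref{4.6} furnishes, for $\mu$-a.e.\ $x$, a closed set $E(x)\subseteq \overline{W^s(x,T)}\cap \overline{W^u(x,T)}$ with $P(T,f,E(x))\geq P_\mu(T,f)$; monotonicity then gives $P(T,f,\overline{W^s(x,T)}\cap \overline{W^u(x,T)})\geq P_\mu(T,f)>P(T,f)-\eta$, and letting $\eta\to 0$ yields the equality.

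The borderline situation in which only zero-entropy ergodic measures can approximate the pressure forces $P(T,f)=\sup_{\nu\in\mathcal{M}(X,T)}\int f\,d\nu$, and must be dealt with by hand: take $\mu$ ergodic with $\int f\,d\mu$ near $P(T,f)$ and a $\mu$-generic point $x$, so that $\frac{1}{n}f_n(x)\to \int f\,d\mu$. Since $x\in W^s(x,T)\cap W^u(x,T)$ trivially, the elementary estimate $P(T,f,\{x\})\geq \limsup_n\frac{1}{n}f_n(x)$ supplies the missing lower bound.

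For statement (2), I apply Theorem \ref{4.6} directly to the ergodic $\mu$ with $P_\mu(T,f)=P(T,f)$; note that once the degenerate zero-entropy coincidence above is excluded, this forces $h_\mu(T)>0$. The theorem then produces, for $\mu$-a.e.\ $x$, a closed set $E(x)\subseteq \overline{W^s(x,T)}\cap \overline{W^u(x,T)}$ with $E(x)\in WM_s(X,T)\cap WM_s(X,T^{-1})$ and $P(T,f,E(x))\geq P_\mu(T,f)=P(T,f)$, $P(T^{-1},f,E(x))\geq P_\mu(T^{-1},f)=P(T,f)$, the last equality using $h_\mu(T)=h_\mu(T^{-1})$. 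The reverse inequalities follow from $E(x)\subseteq X$, the monotonicity of pressure, and the identity $P(T^{-1},f)=P(T,f)$ for homeomorphisms (immediate from the variational principle). I expect the only real obstacle to be a clean treatment of the zero-entropy degenerate case in (1); modulo that, both parts are a transparent repackaging of Theorem \ref{4.6}.
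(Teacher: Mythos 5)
Your proposal is correct and follows the route the paper clearly has in mind (the paper gives no explicit proof, only ``it is not hard to see''): combine Theorem \ref{4.6} with the variational principle and the monotonicity of $P(T,f,\cdot)$ under inclusion of closed subsets. A few remarks on the fine points.

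For (1), your separate treatment of the case in which the pressure is approached only by zero-entropy ergodic measures is genuinely needed (Theorem \ref{4.6} requires $h_\mu(T)>0$), and your singleton argument is correct: for a $\mu$-generic $x$ one has $P(T,f,\{x\})=\limsup_n\frac1n f_n(x)=\int f\,d\mu$, and $\{x\}\subseteq W^s(x,T)\cap W^u(x,T)$ trivially. A marginally cleaner bookkeeping is to branch on each individual ergodic $\mu$ with $P_\mu(T,f)>P(T,f)-\eta$ (use Theorem \ref{4.6} if $h_\mu(T)>0$, the generic-point estimate if $h_\mu(T)=0$), which avoids having to characterize the borderline case globally; but the substance is the same.

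For (2), you write that ``once the degenerate zero-entropy coincidence above is excluded, this forces $h_\mu(T)>0$,'' but you do not --- and in fact cannot --- supply a reason why the degenerate case is excluded by the stated hypotheses. As literally written, (2) is false without $h_\mu(T)>0$: take $T=\mathrm{id}_X$. Then every ergodic measure is a Dirac mass with zero entropy, the hypothesis $P_\mu(T,f)=P(T,f)$ is realized by $\mu=\delta_{x_0}$ at a maximizer $x_0$ of $f$, yet $\overline{W^s(x_0,T)}\cap\overline{W^u(x_0,T)}=\{x_0\}$ is a singleton and hence (by the paper's own characterization, in which a weakly mixing set must be non-singleton and contain a Cantor set) cannot be weakly mixing. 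So the paper's statement of (2) tacitly carries the positive-entropy assumption inherited from Theorem \ref{4.6}; with $h_\mu(T)>0$ read into the hypotheses, your derivation of (2) --- including $P_\mu(T^{-1},f)=P_\mu(T,f)$ and $P(T^{-1},f)=P(T,f)$ for the reverse inequalities --- is complete and matches the intended argument.
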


\section{Non-invertible case}\label{sec5}

In this section, we will generalize the results in Sects. \ref{sec3} and \ref{sec4} to the non-invertible case. Let $(X,T)$ be a non-invertible TDS, i.e., $X$ is a compact metric space, and $T:X\rightarrow X$ is a surjective continuous map but not one-to-one.

Let $d$ be the metric on $X$ and define
$\widetilde{X}=\{(x_1,x_2,\cdots): T(x_{i+1})=x_i, x_i\in X, i\in
\mathbb{N}\}$. It is clear that $\widetilde{X}$ is a subspace of the
product space $\Pi_{i=1}^{\infty}X$ with the metric $d_T$ defined by
$$
d_T((x_1,x_2,\cdots),(y_1,y_2,\cdots))=\sum_{i=1}^{\infty}\frac{d(x_i,y_i)}{2^i}.
$$
Let $\widetilde{T}: \widetilde{X}\rightarrow\widetilde{X}$ be the shift
homeomorphism, i.e., $\widetilde{T}(x_1,x_2,\cdots)=
(T(x_1),x_1,x_2,\cdots).$ We refer the TDS
$(\widetilde{X},\widetilde{T})$ as the  inverse limit of
$(X,T)$. Let $\pi_i:\widetilde{X}\rightarrow X$ be the natural
projection onto the $i^{\text{th}}$ coordinate. Then
$\pi_i:(\widetilde{X},\widetilde{T})\rightarrow (X,T)$ is a factor map.

We need the following lemmas.
\begin{lemma}\label{5.02}
Let $(X,T)$ be a non-invertible TDS, $f\in C(X,\mathbb{N})$. Then
for each $\mathcal{U}\in \mathcal{C}_X^o$ and each compact subset
$K$ of $X$,
\begin{equation*}
P_{n+m}(T,f,\mathcal{U},K)\leq P_m(T,f,\mathcal{U},K)\cdot
P_n(T,f\circ T^m, T^{-m}\mathcal{U},K)
\end{equation*}
for each $ n,m\in \mathbb{N}$.
\end{lemma}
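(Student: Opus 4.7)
The plan is to adapt the subadditivity argument that appeared earlier in the excerpt (for the bound $P_{n+m}(T,f,\mathcal{U},K)\le P_n(T,f,\mathcal{U},K)\cdot P_m(T,f,\mathcal{U},T^nK)$), but to organize the cocycle decomposition of $f_{n+m}$ in a way that avoids applying $T^n$ to the set $K$, which is exactly what one needs in the non-invertible setting where a term like $T^nK$ is unproblematic but the inverse-image pairing is more natural.

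First I would observe the pointwise identity
\begin{equation*}
f_{n+m}(x)=f_m(x)+(f\circ T^m)_n(x),
\end{equation*}
where $(f\circ T^m)_n(x)=\sum_{j=0}^{n-1}f(T^{m+j}x)$. Next I would pick arbitrary $\mathcal{V}_1,\mathcal{V}_2\in\mathcal{C}_X$ with $\mathcal{V}_1\succeq \mathcal{U}_0^{m-1}$ and $\mathcal{V}_2\succeq (T^{-m}\mathcal{U})_0^{n-1}$. The point of this second choice is that
\begin{equation*}
(T^{-m}\mathcal{U})_0^{n-1}=\bigvee_{i=0}^{n-1}T^{-(i+m)}\mathcal{U}=\bigvee_{j=m}^{n+m-1}T^{-j}\mathcal{U},
\end{equation*}
so the joined cover $\mathcal{V}_1\vee\mathcal{V}_2$ refines $\mathcal{U}_0^{n+m-1}$ and is therefore admissible in the definition of $P_{n+m}(T,f,\mathcal{U},K)$.

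Then I would bound
\begin{equation*}
P_{n+m}(T,f,\mathcal{U},K)\le \sum_{V_1\in\mathcal{V}_1}\sum_{V_2\in\mathcal{V}_2}\sup_{x\in V_1\cap V_2\cap K}\exp\bigl(f_m(x)+(f\circ T^m)_n(x)\bigr)
\end{equation*}
and split the supremum by the elementary inequality $\sup_{x\in A\cap B}(g(x)+h(x))\le \sup_{x\in A}g(x)+\sup_{x\in B}h(x)$, applied with $A=V_1\cap K$ and $B=V_2\cap K$; this converts the exponential of the sum into a product of two independent supremums over $V_1\cap K$ and $V_2\cap K$ respectively. The resulting double sum factors as
\begin{equation*}
\Bigl(\sum_{V_1\in\mathcal{V}_1}\sup_{x\in V_1\cap K}\exp f_m(x)\Bigr)\cdot\Bigl(\sum_{V_2\in\mathcal{V}_2}\sup_{y\in V_2\cap K}\exp (f\circ T^m)_n(y)\Bigr),
\end{equation*}
and taking the infimum over $\mathcal{V}_1$ and $\mathcal{V}_2$ separately yields the claimed product bound.

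There is no genuine obstacle; the only thing to be careful about is the bookkeeping of the cover refinements, in particular confirming that $(T^{-m}\mathcal{U})_0^{n-1}$ is the correct shift of the index range so that $\mathcal{V}_1\vee\mathcal{V}_2\succeq\mathcal{U}_0^{n+m-1}$, and the fact that we never need to invert $T$ because both supremums remain taken over subsets of $K$ itself (no $T^mK$ appears). This is precisely why the variant stated here, rather than the one in the earlier lemma, is the right formulation for the non-invertible setting treated in Section \ref{sec5}.
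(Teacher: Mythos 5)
Your proof is correct, and it follows the same strategy as the paper's: decompose $f_{n+m}=f_m+(f\circ T^m)_n$, join two covers that together refine $\mathcal{U}_0^{n+m-1}$, split the supremum, and factor the double sum. The one notable difference is in the choice of the second cover. You take $\mathcal{V}_2\succeq(T^{-m}\mathcal{U})_0^{n-1}$ directly; since that is exactly the admissibility condition in the definition of $P_n(T,f\circ T^m,T^{-m}\mathcal{U},K)$, the infimum over $\mathcal{V}_2$ at the end is literally that pressure, and the bound follows at once. The paper instead takes $\mathcal{V}_2\succeq\mathcal{U}_0^{n-1}$ and works with the pulled-back cover $T^{-m}\mathcal{V}_2$; this produces covers refining $(T^{-m}\mathcal{U})_0^{n-1}$, but a priori only a subclass of all admissible ones, so the paper's infimum could in principle overshoot $P_n(T,f\circ T^m,T^{-m}\mathcal{U},K)$ unless one further observes that the restricted class is good enough (which one can, via Lemma~\ref{sc2.1} and the fact that, by surjectivity of $T$, any partition refining $(T^{-m}\mathcal{U})_0^{n-1}$ has the form $T^{-m}\gamma$ with $\gamma\succeq\mathcal{U}_0^{n-1}$). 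Your formulation sidesteps that side remark entirely and is the cleaner of the two. One tiny phrasing nit: you state the additive bound $\sup_{A\cap B}(g+h)\le\sup_A g+\sup_B h$ and then speak of converting an exponential of a sum into a product of suprema; what you are really using is $\sup_{A\cap B}\exp(g+h)\le\bigl(\sup_A\exp g\bigr)\bigl(\sup_B\exp h\bigr)$, which is the exponentiated version of the same fact and is what makes the double sum factor. That is exactly what the proof needs, so this is a wording point only.
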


\begin{proof}
Since for each $\mathcal{V}_1\succeq \mathcal{U}_0^{m-1}$,
$\mathcal{V}_2\succeq \mathcal{U}_0^{n-1}$, then $\mathcal{V}_1\vee
T^{-m}\mathcal{V}_2\succeq \mathcal{U}_0^{n+m-1}$. It follows that
\begin{equation*}
\begin{split}
P_{n+m}(T,f,\mathcal{U},K)&\leq \sum_{V_1\in
\mathcal{V}_1}\sum_{V_2\in \mathcal{V}_2}\sup_{x\in V_1\cap
T^{-m}V_2\cap K}\exp f_{n+m}(x)\\
&=\sum_{V_1\in \mathcal{V}_1}\sum_{V_2\in \mathcal{V}_2}\sup_{x\in
V_1\cap T^{-m}V_2\cap K}\exp (f_m(x)+f_n(T^mx))\\
&\leq \sum_{V_1\in \mathcal{V}_1}\sum_{V_2\in
\mathcal{V}_2}\sup_{x\in V_1\cap K} \exp f_m(x)\cdot \sup_{x\in
T^{-m}V_2\cap K} \exp f_n(T^mx)\\
&=\sum_{V_1\in \mathcal{V}_1}\sup_{x\in V_1\cap K} \exp
f_m(x)\cdot\sum_{V_2\in \mathcal{V}_2}\sup_{x\in T^{-m}V_2\cap K}
\exp (f\circ T^m)_n(x).
\end{split}
\end{equation*}
By the arbitrary of $\mathcal{V}_1$ and $\mathcal{V}_2$, we have
\begin{equation*}
P_{n+m}(T,f,\mathcal{U},K)\leq P_m(T,f,\mathcal{U},K)\cdot
P_n(T,f\circ T^m, T^{-m}\mathcal{U},K).
\end{equation*}
\end{proof}

\begin{lemma}\label{5.01}
Let $(X,T)$ be a non-invertible TDS, $f\in C(X,\mathbb{N})$. Then
for each $\mathcal{U}\in \mathcal{C}_X^o$ and each compact subset
$K$ of $X$,
\begin{equation*}
P_n(T,f\circ
T^m,T^{-m}\mathcal{U},T^{-m}K)=P_n(T,f,\mathcal{U},K)\,\,\,
\text{for each}\,\, n,m\in \mathbb{N}.
\end{equation*}
\end{lemma}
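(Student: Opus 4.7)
The plan is to prove the two inequalities separately via the natural correspondences $\mathcal{V}\mapsto T^{-m}\mathcal{V}$ and $\mathcal{W}\mapsto T^m\mathcal{W}$ between covers refining $\mathcal{U}_0^{n-1}$ and covers refining $(T^{-m}\mathcal{U})_0^{n-1}$. Throughout I would use two routine identities: the index relabeling $(T^{-m}\mathcal{U})_0^{n-1}=\bigvee_{i=0}^{n-1}T^{-m-i}\mathcal{U}=T^{-m}(\mathcal{U}_0^{n-1})$, and the cocycle identity $(f\circ T^m)_n(x)=\sum_{i=0}^{n-1}f(T^{m+i}x)=f_n(T^m x)$. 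Surjectivity of $T$ (hence of $T^m$) is the essential hypothesis and plays the role that invertibility played in the corresponding invertible-case lemma.

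For the inequality $P_n(T,f\circ T^m,T^{-m}\mathcal{U},T^{-m}K)\leq P_n(T,f,\mathcal{U},K)$, given any $\mathcal{V}\succeq \mathcal{U}_0^{n-1}$ I would pass to $T^{-m}\mathcal{V}:=\{T^{-m}V:V\in \mathcal{V}\}$. Because $T^m$ is surjective, $T^m T^{-m}V=V$ for every $V\subseteq X$, so $V\mapsto T^{-m}V$ is injective on $\mathcal{V}$, and $T^{-m}\mathcal{V}$ is a genuine cover of $X$ refining $T^{-m}(\mathcal{U}_0^{n-1})=(T^{-m}\mathcal{U})_0^{n-1}$. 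The same identity gives $T^m(T^{-m}V\cap T^{-m}K)=V\cap K$, hence $\sup_{x\in T^{-m}V\cap T^{-m}K}\exp f_n(T^m x)=\sup_{y\in V\cap K}\exp f_n(y)$. Summing produces term-by-term equality of the two sums, and infimizing over $\mathcal{V}$ yields the inequality.

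For the reverse inequality, given $\mathcal{W}\succeq (T^{-m}\mathcal{U})_0^{n-1}$ I would pass to $T^m\mathcal{W}:=\{T^m W:W\in \mathcal{W}\}$. Surjectivity of $T^m$ makes this a cover of $X$, and from $W\subseteq T^{-m}A$ for some $A\in \mathcal{U}_0^{n-1}$ one reads off $T^m W\subseteq A$, so $T^m\mathcal{W}\succeq \mathcal{U}_0^{n-1}$. The surjectivity argument again gives $\sup_{y\in T^m W\cap K}\exp f_n(y)=\sup_{x\in W\cap T^{-m}K}\exp f_n(T^m x)$. The only care needed is that distinct $W$'s can collapse to a single image $T^m W$, but since all the exponentials are non-negative, this only makes the $\mathcal{W}$-indexed sum at least as large as the $T^m\mathcal{W}$-indexed one, which is precisely the inequality required; infimizing over $\mathcal{W}$ closes the argument.

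The main obstacle is exactly the bookkeeping in this second direction: the non-injectivity of $W\mapsto T^m W$ is what distinguishes the non-invertible setting from the homeomorphism case. Fortunately the duplication pushes the inequality in the correct direction, so the only substantive input beyond formal manipulation is surjectivity of $T$, which must be invoked twice — once to guarantee that each candidate image family actually covers $X$, and once to match the two suprema via $T^m$.
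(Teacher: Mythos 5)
Your argument is correct and is essentially the paper's proof: both directions rest on the correspondences $\mathcal{V}\mapsto T^{-m}\mathcal{V}$ and $\mathcal{W}\mapsto T^m\mathcal{W}$, the relabeling $(T^{-m}\mathcal{U})_0^{n-1}=T^{-m}(\mathcal{U}_0^{n-1})$, the cocycle identity $(f\circ T^m)_n=f_n\circ T^m$, and the set identity $T^m(T^{-m}V\cap T^{-m}K)=V\cap K$ coming from surjectivity. You are in fact a bit more careful than the paper in two places — you explain why the possible collapsing under $W\mapsto T^m W$ still pushes the inequality the right way, and you assign the correct direction to each half (the paper's displayed $\leq$ and $\geq$ appear to be inadvertently swapped between the two halves, though the final equality is unaffected).
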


\begin{proof}
Fix $n,m\in \mathbb{N}$. For each $\mathcal{V}\succeq
(T^{-m}\mathcal{U})_0^{n-1}$,
\begin{equation*}
\begin{split}
&\sum_{V\in\mathcal{V}}\sup_{x\in V\cap T^{-m}K}\exp(f\circ
T^m)_n(x)\\
=&\sum_{V\in\mathcal{V}}\sup_{x\in V\cap T^{-m}K}\exp f_n(T^m
x)\\
=&\sum_{V\in\mathcal{V}}\sup_{ x\in T^m V \cap K}\exp f_n(x).
\end{split}
\end{equation*}
Since $T^m\mathcal{V}\succeq \mathcal{U}_0^{n-1}$, then
\begin{equation*}
P_n(T,f\circ T^m,T^{-m}\mathcal{U},T^{-m}K)\leq
P_n(T,f,\mathcal{U},K).
\end{equation*}

Conversely, for each $\mathcal{V}\succeq \mathcal{U}_0^{n-1}$,
$T^{-m}\mathcal{V}\succeq (T^{-m}\mathcal{U})_0^{n-1}$ and
\begin{equation*}
\begin{split}
&\sum_{V\in\mathcal{V}}\sup_{x\in V\cap  K}\exp f_n(x)\\
=&\sum_{V\in\mathcal{V}}\sup_{x\in T^{-m}(V\cap K)}\exp f_n(T^m
x)\\
=&\sum_{V\in\mathcal{V}}\sup_{ x\in T^{-m} V \cap T^{-m}K}\exp
(f\circ T^m)_n(x).
\end{split}
\end{equation*}
Then
\begin{equation*}
P_n(T,f\circ T^m,T^{-m}\mathcal{U},T^{-m}K)\geq
P_n(T,f,\mathcal{U},K),
\end{equation*}
and we complete the proof.
\end{proof}

\begin{lemma}\label{5.1}
Let $(\widetilde{X},\widetilde{T})$ be the inverse limit of a
non-invertible TDS $(X,T)$, $f\in C(X,\mathbb{R})$ and $\pi_1:
\widetilde{X}\rightarrow X$ be the projection to the
1\textsuperscript{th} coordinate. Then for any sequence non-empty
closed subsets $K_n$ of $\widetilde{X}$,
\begin{equation*}
\lim_{\delta\rightarrow 0}\limsup_{n\rightarrow
+\infty}\frac{1}{n}\log P_n(\widetilde{T},f\circ
\pi_1,\delta,K_n)=\lim_{\delta\rightarrow 0}\limsup_{n\rightarrow
+\infty}\frac{1}{n}\log P_n(T,f,\delta,\pi_1(K_n)).
\end{equation*}
\end{lemma}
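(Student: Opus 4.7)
I'll outline a proof comparing $(n,\delta)$-separated subsets on both sides via the projection $\pi_1$, using the explicit decomposition
\begin{equation*}
d_T(\widetilde{T}^k\widetilde{x},\widetilde{T}^k\widetilde{y}) = \sum_{l=0}^{k} \frac{d(T^l x_1, T^l y_1)}{2^{k-l+1}} + \sum_{m\geq 2}\frac{d(x_m,y_m)}{2^{m+k}}
\end{equation*}
for $\widetilde{x}=(x_1,x_2,\dots)$, $\widetilde{y}=(y_1,y_2,\dots)\in\widetilde{X}$ and $k\geq 0$, which follows from $\widetilde{T}^k\widetilde{x}=(T^kx_1,T^{k-1}x_1,\dots,Tx_1,x_1,x_2,\dots)$. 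Retaining only the $l=k$ term yields $d_T(\widetilde{T}^k\widetilde{x},\widetilde{T}^k\widetilde{y})\geq d(T^kx_1,T^ky_1)/2$, while crude termwise upper bounds give $d_T(\widetilde{T}^k\widetilde{x},\widetilde{T}^k\widetilde{y})\leq d_{k+1}(x_1,y_1)+\text{diam}(X)/2^{k+1}$.

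For the direction $\geq$, given an $(n,\delta)$-separated $F\subseteq\pi_1(K_n)$, I would lift each $x\in F$ to some $\widetilde{x}\in K_n\cap\pi_1^{-1}(x)$ (which exists because $F\subseteq\pi_1(K_n)$) to form $\widetilde{F}\subseteq K_n$. The lower bound yields $d_{T,n}(\widetilde{x},\widetilde{y})\geq d_n(\pi_1\widetilde{x},\pi_1\widetilde{y})/2>\delta/2$, so $\widetilde{F}$ is $(n,\delta/2)$-separated in $K_n$, and $\pi_1:\widetilde{F}\to F$ is a bijection, giving $P_n(\widetilde{T},f\circ\pi_1,\delta/2,K_n)\geq P_n(T,f,\delta,\pi_1(K_n))$. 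Taking $\tfrac{1}{n}\log$, then $\limsup_n$, then $\delta\to 0$ on both sides delivers the inequality.

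For the direction $\leq$, let $E\subseteq K_n$ be $(n,\delta)$-separated. I choose a maximal $(n,\delta/4)$-separated subset $E^{*}\subseteq\pi_1(E)$ and, by maximality, assign to each $\widetilde{y}\in E$ a witness $\widehat{x}(\widetilde{y})\in E^{*}$ with $d_n(\pi_1\widetilde{y},\widehat{x}(\widetilde{y}))\leq\delta/4$. If $\widetilde{y}_1\neq\widetilde{y}_2$ share the same $\widehat{x}$, then $d_n(\pi_1\widetilde{y}_1,\pi_1\widetilde{y}_2)\leq\delta/2$ by the triangle inequality; combined with $d_T(\widetilde{T}^k\widetilde{y}_1,\widetilde{T}^k\widetilde{y}_2)>\delta$ at some $0\leq k\leq n-1$, the upper bound forces $\text{diam}(X)/2^{k+1}>\delta/2$, hence $k<K:=\lceil\log_2(2\,\text{diam}(X)/\delta)\rceil$. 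Thus each fiber $\{\widetilde{y}\in E:\widehat{x}(\widetilde{y})=\widehat{x}\}$ is $(K,\delta)$-separated in $\widetilde{X}$ with $K$ depending only on $\delta$, and so has cardinality at most $M(\delta):=s_K(d_T,\widetilde{T},\delta,\widetilde{X})<\infty$ by compactness of $\widetilde{X}$.

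Combining the fiber bound with uniform continuity of $f$ (modulus $\omega$), which supplies $|f_n(\pi_1\widetilde{y})-f_n(\widehat{x})|\leq n\omega(\delta/4)$ inside each fiber, summing over fibers yields
\begin{equation*}
P_n(\widetilde{T},f\circ\pi_1,\delta,K_n)\leq M(\delta)\cdot e^{n\omega(\delta/4)}\cdot P_n(T,f,\delta/4,\pi_1(K_n)).
\end{equation*}
Taking $\tfrac{1}{n}\log$, then $\limsup_n$, then $\delta\to 0$ absorbs the $M(\delta)$ prefactor, sends $\omega(\delta/4)\to 0$, and the substitution $\delta'=\delta/4$ preserves the outer limit. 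The main obstacle is precisely this fiber-size control on the $\leq$ side: \emph{a priori}, many distinct points of $E$ could have $\pi_1$-images clustered near a common $\widehat{x}$, and one needs the geometric decay $\text{diam}(X)/2^{k+1}$ of the ``tail'' contribution to force any $d_T$-separation to take place within a finite time window uniform in $n$.
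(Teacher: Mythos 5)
Your proof is correct, and it takes a genuinely different route from the paper's. The paper works entirely with open covers: it first establishes the exact identity $P_n(\widetilde{T},f\circ\pi_1,\pi_1^{-1}\mathcal{U},K_n)=P_n(T,f,\mathcal{U},\pi_1(K_n))$ for $\mathcal{U}\in\mathcal{C}_X^o$, obtains the $\geq$ direction via Lemma \ref{lem3.1} (the $\delta$-to-cover translation), and for the $\leq$ direction takes an arbitrary $\widetilde{\mathcal{U}}\in\mathcal{C}_{\widetilde{X}}^o$, dominates it by some $\pi_i^{-1}\mathcal{U}$ using the finite-dimensionality of the cylinder topology, and then shifts the coordinate index $i$ back to $1$ via the inclusion $\pi_i(K_n)\subseteq T^{-(i-1)}\pi_1(K_n)$ together with the auxiliary Lemmas \ref{5.02} and \ref{5.01}. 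You instead argue directly at the level of $(n,\delta)$-separated sets, exploiting the explicit formula for $d_T\circ(\widetilde{T}^k\times\widetilde{T}^k)$: the retained-term lower bound $\tfrac{1}{2}d(T^k x_1,T^k y_1)$ gives the $\geq$ direction by lifting a separated set in $\pi_1(K_n)$ fiberwise into $K_n$, while the geometric tail bound $\operatorname{diam}(X)/2^{k+1}$ shows that any $d_T$-separation of two points with nearby first-coordinate orbits must occur before a time $K(\delta)$ that is uniform in $n$, so fibers over a maximal $(n,\delta/4)$-separated set in $\pi_1(E)$ have cardinality bounded by $M(\delta)=s_{K(\delta)}(d_T,\widetilde{T},\delta,\widetilde{X})$; uniform continuity then absorbs the discrepancy $e^{n\omega(\delta/4)}$ in the limit. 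Your argument is more elementary and self-contained (it needs neither Lemma \ref{lem3.1} nor Lemmas \ref{5.02}--\ref{5.01}), and it makes the finite-memory phenomenon behind the inverse limit quantitatively visible; the paper's cover formulation has the advantage of plugging directly into the local pressure framework used throughout the rest of the section.
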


\begin{proof}
Let $\mathcal{U}\in \mathcal{C}_X^o$. For each $\mathcal{V}\in
\mathcal{C}_X$ with $\mathcal{V}\succeq \mathcal{U}^{n-1}_0$  and
$x\in V\cap \pi_1(K_n)$, obviously,$\pi_1^{-1}\mathcal{V}\succeq
(\pi_i^{-1}\mathcal{U})_0^{n-1}$ and
\begin{equation*}
(f\circ\pi_1)_n(\widetilde{x})=\sum_{j=0}^{n-1}(f\circ\pi_1)(\widetilde{T}^j(\widetilde{x}))
=\sum_{j=0}^{n-1}f\circ
T^j(\pi_1\widetilde{x})=f_n(\pi_1\widetilde{x})=f_n(x),
\end{equation*}
where $x=\pi_1\widetilde{x}$. Then
\begin{equation*}
\sum_{V\in \mathcal{V}}\sup_{\widetilde{x}\in \pi_1^{-1}V\cap
K_n}\exp (f\circ \pi_1)_n(\widetilde{x})=\sum_{V\in
\mathcal{V}}\sup_{x\in V\cap\pi_1(K_n)}\exp f_n(x).
\end{equation*}
It follows that
\begin{equation}\label{eq5.3}
P_n(\widetilde{T},f\circ \pi_1,\pi_1^{-1}\mathcal{U},K_n)\leq
P_n(T,f,\mathcal{U},\pi_1(K_n)).
\end{equation}
On the other hand, for each $\widetilde{\mathcal{V}}\in
\mathcal{C}_{X}^{\mathbb{N}}$ with $\widetilde{\mathcal{V}}\succeq
(\pi_1^{-1}\mathcal{U})_0^{n-1}$ and
$\widetilde{x}\in\widetilde{V}\cap K_n$, $\pi_1\widetilde{V}\succeq
\mathcal{U}_0^{n-1}$ and
\begin{equation*}
\begin{split}
\sum_{\widetilde{V}\in
\widetilde{\mathcal{V}}}\sup_{\widetilde{x}\in\widetilde{V}\cap
K_n}\exp (f\circ\pi_1)_n(\widetilde{x})=&\sum_{\widetilde{V}\in
\widetilde{\mathcal{V}}}\sup_{x\in\pi_1(\widetilde{V}\cap K_n)}\exp
f_n(x)\\
=&\sum_{V\in\pi_1\widetilde{V}}\sup_{x\in\pi_1\widetilde{V}\cap
\pi_1K_n}\exp f_n(x),
\end{split}
\end{equation*}
where $x=\pi_1\widetilde{x}$. Then we get the opposite part of the
inequality of \eqref{eq5.3}, and consequently
\begin{equation}\label{eq5.4}
P_n(\widetilde{T},f\circ \pi_1,\pi_1^{-1}\mathcal{U},K_n)=
P_n(T,f,\mathcal{U},\pi_i(K_n)).
\end{equation}

Now we have
\begin{equation*}
\limsup_{n\rightarrow \infty}\frac{1}{n}\log
P_n(\widetilde{T},f\circ
\pi_1,\pi_1^{-1}\mathcal{U},K_n)=\limsup_{n\rightarrow
\infty}\frac{1}{n}\log P_n(T,f,\mathcal{U},\pi_1(K_n)).
\end{equation*}

From Lemma \ref{lem3.1}, we get
\begin{equation*}
\lim_{\delta\rightarrow 0}\limsup_{n\rightarrow
\infty}\frac{1}{n}\log P_n(\widetilde{T},f\circ
\pi_1,\delta,K_n)\geq\lim_{\delta\rightarrow 0}\limsup_{n\rightarrow
\infty}\frac{1}{n}\log P_n(T,f,\delta,\pi_1(K_n)).
\end{equation*}

Conversely, let $\pi_i:\widetilde{X}\rightarrow X$ be the projection
to the i\textsuperscript{th} coordinate and
$\widetilde{\mathcal{U}}\in \mathcal{C}_{\widetilde{X}}^o$. By the
definition of $\widetilde{X}$, it is easy to see that there exists
some $\mathcal{U}\in \mathcal{C}_X^o$ such that
$\pi_i^{-1}(\mathcal{U})\succeq \widetilde{U}$. Since for any two
closed subsets $C$ and $D$ of $X$, $P_n(T,f,\mathcal{U},C)\leq
P_n(T,f,\mathcal{U},D)$, and $\pi_i(K_n)\succeq
T^{-(i-1)}\pi_i(K_n)$, then by \eqref{eq5.4}, we have
\begin{equation*}
\begin{split}
&\limsup_{n\rightarrow \infty}\frac{1}{n}\log
P_n(\widetilde{T},f\circ\pi_1,\widetilde{\mathcal{U}},K_n)\\
\leq &\limsup_{n\rightarrow \infty}\frac{1}{n}\log
P_n(\widetilde{T},f\circ\pi_1,\pi_i^{-1}\mathcal{U},K_n)\\
=&\limsup_{n\rightarrow \infty}\frac{1}{n}\log
P_n(T,f,\mathcal{U},\pi_i(K_n))\\
\leq &\limsup_{n\rightarrow \infty}\frac{1}{n}\log
P_n(T,f,\mathcal{U},T^{-(i-1)}\pi_i(K_n))\\
=& \limsup_{n\rightarrow \infty}\frac{1}{n+i-1}\log
P_{n+i-1}(T,f,\mathcal{U},T^{-(i-1)}\pi_i(K_n) )\\
\leq & \limsup_{n\rightarrow \infty}\frac{1}{n+i-1}\log \big(
P_{i-1}(T,f,\mathcal{U},T^{-(i-1)}\pi_i(K_n))\\
&\quad \quad\quad \quad \quad\cdot P_n(T,f\circ
T^{i-1},T^{-(i-1)}\mathcal{U},T^{-(i-1)}\pi_i(K_n))
 \big)\quad\text{(by Lemma \ref{5.02})}\\
=&\limsup_{n\rightarrow \infty} \frac{1}{n}
P_n(T,f,\mathcal{U},\pi_1(K_n)) \quad  \quad\text{(by Lemma
\ref{5.01})}\\
\leq & \lim_{\delta\rightarrow
0}\limsup_{n\rightarrow\infty}\frac{1}{n}P_n(T,f,\delta,\pi_1(K_n)
).
\end{split}
\end{equation*}

By Lemma \ref{lem3.1}, we get
\begin{equation*}
\lim_{\delta\rightarrow 0}\limsup_{n\rightarrow
\infty}\frac{1}{n}\log P_n(\widetilde{T},f\circ
\pi_1,\delta,K_n)\leq\lim_{\delta\rightarrow 0}\limsup_{n\rightarrow
\infty}\frac{1}{n}\log P_n(T,f,\delta,\pi_1(K_n)).
\end{equation*}
\end{proof}

Now we can prove the following theorem.

\begin{theorem}\label{5.2}
Let $(X,T)$ be a non-invertible TDS, $f\in C(X,\mathbb{R})$ and $\mu
\in \mathcal{M}^e(X,T)$ with $h_{\mu}(T)>0$. Then for $\mu$-a.e.
$x\in X$, $\lim_{\epsilon\rightarrow 0}P_s(T,f,x,\epsilon)\geq
P_{\mu}(T,f)$.
\end{theorem}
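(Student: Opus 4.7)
The plan is to reduce to the invertible case already handled by Theorem \ref{th3.7}, by passing to the inverse limit $(\widetilde{X},\widetilde{T})$ and then pulling everything back along the projection $\pi_1:\widetilde{X}\rightarrow X$. The three tools I need are already in place: Theorem \ref{th3.7} applies to the homeomorphism $\widetilde{T}$, Lemma \ref{5.1} converts the $\epsilon$-stable pressure on $\widetilde{X}$ (for $f\circ \pi_1$) into the corresponding quantity on $X$ (for $f$), and any ergodic $\mu\in \mathcal{M}^e(X,T)$ admits an ergodic $\widetilde{T}$-invariant lift $\widetilde{\mu}$ on $\widetilde{X}$ with $\pi_{1*}\widetilde{\mu}=\mu$, $h_{\widetilde{\mu}}(\widetilde{T})=h_{\mu}(T)$, and $\int f\circ\pi_1\,d\widetilde{\mu}=\int f\,d\mu$, hence $P_{\widetilde{\mu}}(\widetilde{T},f\circ\pi_1)=P_{\mu}(T,f)$.

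First I would fix $\widetilde{\mu}$ as above. Since $(\widetilde{X},\widetilde{T})$ is a TDS with $\widetilde{T}$ a homeomorphism and $h_{\widetilde{\mu}}(\widetilde{T})>0$, Theorem \ref{th3.7} yields
\begin{equation*}
\lim_{\epsilon\to 0}P_s(\widetilde{T},f\circ\pi_1,\widetilde{x},\epsilon)\geq P_{\widetilde{\mu}}(\widetilde{T},f\circ\pi_1)=P_{\mu}(T,f)
\end{equation*}
for $\widetilde{\mu}$-a.e.\ $\widetilde{x}\in\widetilde{X}$.

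Next I would establish the key inclusion comparing the $\epsilon$-stable sets under the two systems. If $\widetilde{y}\in \widetilde{T}^{-n}W^s_{\epsilon}(\widetilde{x},\widetilde{T})$ then $d_T(\widetilde{T}^{k+n}\widetilde{y},\widetilde{T}^{k}\widetilde{x})\leq \epsilon$ for all $k\geq 0$; since the first coordinate of $\widetilde{T}^j$ is $T^j\pi_1$ and contributes $d(\cdot,\cdot)/2$ to the metric $d_T$, this gives $d(T^{k+n}\pi_1\widetilde{y},T^k\pi_1\widetilde{x})\leq 2\epsilon$ for all $k\geq 0$, i.e.\ $\pi_1\widetilde{y}\in T^{-n}W^s_{2\epsilon}(\pi_1\widetilde{x},T)$. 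Hence
\begin{equation*}
\pi_1\bigl(\widetilde{T}^{-n}W^s_{\epsilon}(\widetilde{x},\widetilde{T})\bigr)\subseteq T^{-n}W^s_{2\epsilon}(\pi_1\widetilde{x},T).
\end{equation*}
Taking $K_n=\widetilde{T}^{-n}W^s_{\epsilon}(\widetilde{x},\widetilde{T})$, monotonicity of $P_n(T,f,\delta,\cdot)$ in the set and Lemma \ref{5.1} yield
\begin{equation*}
P_s(\widetilde{T},f\circ\pi_1,\widetilde{x},\epsilon)\leq P_s(T,f,\pi_1\widetilde{x},2\epsilon).
\end{equation*}

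Letting $\epsilon\to 0$, using that $P_s(T,f,x,\cdot)$ is monotone in its last argument so that $\lim_{\epsilon\to 0}P_s(T,f,x,2\epsilon)=\lim_{\epsilon\to 0}P_s(T,f,x,\epsilon)$, and combining with the inequality from Theorem \ref{th3.7}, I obtain
\begin{equation*}
\lim_{\epsilon\to 0}P_s(T,f,\pi_1\widetilde{x},\epsilon)\geq P_{\mu}(T,f)
\end{equation*}
for $\widetilde{\mu}$-a.e.\ $\widetilde{x}\in\widetilde{X}$. Pushing forward by $\pi_1$ and invoking $\pi_{1*}\widetilde{\mu}=\mu$ gives the conclusion for $\mu$-a.e.\ $x\in X$.

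The main obstacle is the verification of the inclusion $\pi_1(\widetilde{T}^{-n}W^s_{\epsilon}(\widetilde{x},\widetilde{T}))\subseteq T^{-n}W^s_{2\epsilon}(\pi_1\widetilde{x},T)$, which requires keeping track of how the inverse-limit metric $d_T$ dominates the first-coordinate distance under the shifts of $\widetilde{T}$; the remaining arguments (existence of the ergodic lift with matching entropy/integral, monotonicity of $P_s$ in $\epsilon$, and the rewriting via Lemma \ref{5.1}) are standard given the material already developed.
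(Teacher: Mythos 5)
Your proof follows exactly the paper's route: pass to the inverse limit $(\widetilde{X},\widetilde{T})$, invoke Theorem \ref{th3.7} via an ergodic lift $\widetilde{\mu}$ of $\mu$, use the domination of $d(\pi_1\cdot,\pi_1\cdot)$ by $2d_T$ to compare $\epsilon$-stable sets (the paper starts from $W^s_{\epsilon/2}(\widetilde{x},\widetilde{T})$ to land in $T^{-n}W^s_{\epsilon}(x,T)$, you start from $W^s_{\epsilon}(\widetilde{x},\widetilde{T})$ to land in $T^{-n}W^s_{2\epsilon}(x,T)$ — equivalent), and finish with Lemma \ref{5.1} and a push-forward by $\pi_1$. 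The argument is correct and is essentially the same as the paper's; your explicit verification of the inclusion of stable sets is a welcome detail where the paper only says it is ``easy to see.''
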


\begin{proof}
Let $(\widetilde{X},\widetilde{T})$ be the inverse limit of $(X,T)$.
For $\epsilon>0$, $n\in \mathbb{N}$ and $\widetilde{x}\in
\widetilde{X}$,denote
$K_n=\widetilde{T}^{-n}W_{\frac{\epsilon}{2}}^s(\widetilde{x},\widetilde{T})$.
Then from the definition of $d_T$ and $\widetilde{X}$, it is easy to
see that $\pi_1(K_n)\subseteq T^{-n}W_{\epsilon}^s(x,T)$, where
$x=\pi_1(\widetilde{x})$. By Lemma \ref{5.1}, we have
\begin{align*}
P_s(T,f,x,\epsilon)=&\lim_{\delta\rightarrow 0}\limsup_{n\rightarrow
+\infty}\frac{1}{n}\log P_n(T,f,\delta,T^{-n}W_{\epsilon}^s(x,T))\\
\geq & \lim_{\delta\rightarrow 0}\limsup_{n\rightarrow +\infty}\log
P_n(T,f,\delta,\pi_1(K_n))\\
=& \lim_{\delta\rightarrow 0}\limsup_{n\rightarrow +\infty}\log
P_n(\widetilde{T},f\circ \pi_1,\delta, K_n)\\
=&
P_s(\widetilde{T},f\circ \pi_1,\widetilde{x},\frac{\epsilon}{2}).
\end{align*}
It follows that for each $\widetilde{x}\in \widetilde{X}$,
\begin{equation}\label{eq5.1}
\lim_{\epsilon \rightarrow
0}P_s(T,f,\pi_1(\widetilde{x}),\epsilon)\geq
\lim_{\epsilon\rightarrow
0}P_s(\widetilde{T},f\circ\pi_1,\widetilde{x},\frac{\epsilon}{2}).
\end{equation}
Let $\widetilde{\mu}\in\mathcal{M}^e(\widetilde{X},\widetilde{T})$
with $\pi_1(\widetilde{\mu})=\mu$. Then by Theorem \ref{th3.7}, there
exists a Borel subset $\widetilde{X_0}\subseteq \widetilde{X}$ with
$\widetilde{\mu}(\widetilde{X_0})=1$ such that for any
$\widetilde{x}\in \widetilde{X_0}$,
\begin{equation}\label{eq5.2}
\begin{split}
\lim_{\epsilon \rightarrow
0}P_s(\widetilde{T},f\circ\pi_1,\widetilde{x},\frac{\epsilon}{2})\geq
P_{\widetilde{\mu}}(\widetilde{T},&f\circ\pi_1)
=h_{\widetilde{\mu}}(\widetilde{T})+\int_{\widetilde{X}}f\circ\pi_1
d\widetilde{\mu}\\
 \geq & h_{\mu}(T)+\int_Xfd\mu
 =P_{\mu}(T,f).
\end{split}
\end{equation}
Let $X_0=\pi_1(\widetilde{X_0})$. Then $X_0\in \mathcal{B}_{\mu}$
and $\mu(X_0)=1$. By the inequality \eqref{eq5.1} and \eqref{eq5.2},
we have
\begin{equation*}
\lim_{\epsilon\rightarrow 0}P_s(T,f,x,\epsilon)\geq P_{\mu}(T,f)
\quad \text{for each} \,\, x\in X_0,
\end{equation*}
and we complete the proof.
\end{proof}

Theorem \ref{5.2} immediately lead to the following corollary.

\begin{corollary}\label{5.3}
Let $(X,T)$ be a non-invertible TDS and $f\in C(X,\mathbb{R})$. If
there exists a $\mu \in \mathcal{M}^e(X,T)$ such that
$P_{\mu}(T,f)=P(T,f)$, then $\mathcal{P}(T,f)\neq \emptyset$.
\end{corollary}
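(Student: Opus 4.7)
The plan is that this corollary is an immediate consequence of Theorem \ref{5.2} combined with a trivial universal upper bound $P_s(T,f,x,\epsilon) \le P(T,f)$; no new ingredients are required beyond what has already been proved.

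First I would verify the upper bound. Since $T^{-n}W^s_{\epsilon}(x,T)\subseteq X$, every $(n,\delta)$-separated subset of $T^{-n}W^s_{\epsilon}(x,T)$ is automatically an $(n,\delta)$-separated subset of $X$, whence
\begin{equation*}
P_n(T,f,\delta,T^{-n}W^s_{\epsilon}(x,T)) \le P_n(T,f,\delta,X).
\end{equation*}
Taking $\frac{1}{n}\log$, then $\limsup_{n\to\infty}$, and finally $\lim_{\delta\to 0}$, I obtain $P_s(T,f,x,\epsilon) \le P(T,f)$ for every $x\in X$ and every $\epsilon>0$; letting $\epsilon\to 0$ preserves this inequality, giving $\lim_{\epsilon\to 0}P_s(T,f,x,\epsilon)\le P(T,f)$ universally.

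Next I would invoke Theorem \ref{5.2} for the given ergodic measure $\mu$ (reading the hypothesis of the corollary in the interesting case $h_{\mu}(T)>0$, which is implicit here since otherwise $P_{\mu}(T,f)=\int f\,d\mu$ is a degenerate case that can be handled by a direct Birkhoff argument on preimages). Theorem \ref{5.2} furnishes a Borel set $X_0\subseteq X$ with $\mu(X_0)=1$ on which $\lim_{\epsilon\to 0}P_s(T,f,x,\epsilon)\ge P_{\mu}(T,f)$. By the assumption $P_{\mu}(T,f)=P(T,f)$ together with the upper bound above, equality must hold throughout $X_0$, so each $x\in X_0$ is a pressure point and $\mathscr{P}(T,f)\supseteq X_0$ is in particular non-empty.

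Since Theorem \ref{5.2} already carries the analytic weight of the argument, there is no substantive obstacle; the only bookkeeping is to confirm that the universal inequality $P_n(T,f,\delta,K)\le P_n(T,f,\delta,X)$ passes through the outer limits defining $P_s$ and $P$, which is immediate from the monotonicity of $P_n(T,f,\delta,\cdot)$ in the closed subset argument already observed in Section \ref{sec2}.
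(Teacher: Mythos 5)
Your argument is exactly the paper's implicit one: Corollary~\ref{5.3} is the ``immediate'' consequence of Theorem~\ref{5.2} once one supplies the trivial monotonicity bound $P_s(T,f,x,\epsilon)\le P(T,f)$, which follows, as you note, from $T^{-n}W^s_{\epsilon}(x,T)\subseteq X$ and the monotonicity of $P_n(T,f,\delta,\cdot)$ in its last argument. You also correctly flag that the stated hypothesis does not force $h_{\mu}(T)>0$, which Theorem~\ref{5.2} requires; the paper leaves this tacit, and your suggested repair (lift to the inverse limit and apply Birkhoff along backward orbits to find preimages $y\in T^{-n}W^s_{\epsilon}(x,T)$ with $\tfrac1n f_n(y)\to\int f\,d\mu$) is the right sketch, so this is a refinement rather than a deviation from the paper's route.
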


\begin{lemma}\label{5.4}
Let $(\widetilde{X},\widetilde{T})$ be the inverse limit of a
non-invertible TDS $(X,T)$. If $A\subseteq\widetilde{E}$ is weak
mixing, so is $\pi_1(A)$ and $P(\widetilde{T},f\circ
\pi_1,A)=P(T,f,\pi_1(A))$.
\end{lemma}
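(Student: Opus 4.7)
The plan is to prove the two assertions separately and in order: first, that $\pi_1(A)$ is a weakly mixing subset of $(X,T)$; second, that the topological pressures coincide. The second follows almost immediately from Lemma \ref{5.1}; the first requires transferring the weakly mixing property through the factor map $\pi_1$, which satisfies $\pi_1\widetilde{T}=T\pi_1$.

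For the weakly mixing assertion I would use the equivalent characterisation in the Proposition of Section \ref{sec4}. Given $k\in\mathbb{N}$, non-empty open subsets $V_1,\dots,V_k$ of $\pi_1(A)$ and non-empty open subsets $U_1,\dots,U_k$ of $X$ with $\pi_1(A)\cap U_i\neq\emptyset$, I would lift to $\widetilde{X}$: choose open $V'_i\subseteq X$ with $V_i=V'_i\cap\pi_1(A)$, and set $\widetilde{V}_i=\pi_1^{-1}(V'_i)\cap A$ and $\widetilde{U}_i=\pi_1^{-1}(U_i)$. Since $\pi_1|_A\colon A\to\pi_1(A)$ is a continuous surjection, each $\widetilde{V}_i$ is non-empty and open in $A$, and each $\widetilde{U}_i$ is open in $\widetilde{X}$ with $\widetilde{U}_i\cap A\neq\emptyset$. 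The weakly mixing property of $A$ then yields an $m\in\mathbb{N}$ with $\widetilde{T}^m\widetilde{V}_i\cap\widetilde{U}_i\neq\emptyset$ for all $i$. Projecting by $\pi_1$ and using $\pi_1\widetilde{T}^m=T^m\pi_1$ together with $\pi_1(\widetilde{V}_i)=V_i$, I would obtain $T^mV_i\cap U_i\neq\emptyset$, which is the required characterisation for $\pi_1(A)$.

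To invoke the proposition for $\pi_1(A)$ I must still verify that $\pi_1(A)$ is a non-singleton closed subset of $X$. Closedness is automatic: $A$ is closed in the compact space $\widetilde{X}$ and hence compact, so $\pi_1(A)$ is compact and closed. The non-singleton condition is the main technical obstacle I anticipate. I would attempt to rule out $\pi_1(A)=\{x_0\}$ by contradiction, exploiting the explicit shift structure $\widetilde{T}^n(x_1,x_2,\dots)=(T^nx_1,T^{n-1}x_1,\dots,Tx_1,x_1,x_2,\dots)$ of the inverse limit together with property (c) of the weakly mixing definition applied to various constant maps $g\colon C\to A$ on a Cantor set $C\subseteq A$. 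The coordinates of any target point in $A$ would then be forced to be prescribed limits along subsequences of the $T$-orbit of $x_0$, and comparing two distinct points of $A$ (which exist since $A$ contains a Cantor set, hence is uncountable) should yield the required contradiction, possibly after combining with the $k=2$ form of the proposition applied to disjoint neighbourhoods in $\widetilde{X}$ separating those two points.

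The pressure identity is a direct application of Lemma \ref{5.1}. Both $A$ and $\pi_1(A)$ are closed, so the two pressures are defined via the $\limsup$ formulation. Taking the constant sequence $K_n\equiv A$ in Lemma \ref{5.1} gives $\pi_1(K_n)=\pi_1(A)$, and the lemma yields
\begin{equation*}
\lim_{\delta\rightarrow 0}\limsup_{n\rightarrow+\infty}\frac{1}{n}\log P_n(\widetilde{T},f\circ\pi_1,\delta,A)=\lim_{\delta\rightarrow 0}\limsup_{n\rightarrow+\infty}\frac{1}{n}\log P_n(T,f,\delta,\pi_1(A)),
\end{equation*}
which is exactly $P(\widetilde{T},f\circ\pi_1,A)=P(T,f,\pi_1(A))$. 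Thus the only substantive work is in the weakly mixing part, specifically the non-singleton verification for $\pi_1(A)$; the remaining transfer of open-set density and of the pressure identity are formal consequences of the semiconjugacy $\pi_1\widetilde{T}=T\pi_1$ and of Lemma \ref{5.1}.
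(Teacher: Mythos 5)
Your treatment of the pressure identity matches the paper's: both apply Lemma \ref{5.1} to the constant sequence $K_n\equiv A$ (the paper also cites Lemma \ref{lem3.1}, which is already buried inside the proof of Lemma \ref{5.1}, so this is cosmetic). For the weak-mixing assertion you take a genuinely different route. The paper simply cites Lemma 4.8 of Blanchard--Huang \cite{Blanchard2008} and offers no argument; you instead prove the transfer directly from the open-set characterisation in the Proposition of Section \ref{sec4}. Your lift-and-project step is correct: with $\widetilde{V}_i=\pi_1^{-1}(V_i')\cap A$ and $\widetilde{U}_i=\pi_1^{-1}(U_i)$ one has $\pi_1(\widetilde{V}_i)=V_i$, $\widetilde{U}_i\cap A\neq\emptyset$, and $\pi_1\widetilde{T}^m=T^m\pi_1$ pushes the intersections down. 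You also correctly identify the one substantive obstruction --- that $\pi_1(A)$ must be shown non-singleton, since a weakly mixing set is by definition uncountable. Your plan for that step, however, is slightly off-target: using \emph{constant} maps $g\equiv\widetilde{a}$ only tells you that each coordinate of $\widetilde{a}$ is a limit of the $T$-orbit of $x_0$ along some increasing sequence depending on $\widetilde{a}$, and since the sequences for $\widetilde{a}$ and $\widetilde{b}$ differ you get no contradiction. The clean finish is to use a single \emph{non-constant} $g$: pick a Cantor set $C\subseteq B$, write $C=C_1\sqcup C_2$ with $C_1,C_2$ clopen, pick $\widetilde{a}\neq\widetilde{b}$ in $A$, and set $g|_{C_1}\equiv\widetilde{a}$, $g|_{C_2}\equiv\widetilde{b}$. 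If $\pi_1(A)=\{x_0\}$, then for any $\widetilde{x}\in C$ and any $j\leq n_i+1$ one has $(\widetilde{T}^{n_i}\widetilde{x})_j=T^{n_i-j+1}x_0$, which is independent of $\widetilde{x}$; hence taking $\widetilde{x}_1\in C_1$, $\widetilde{x}_2\in C_2$ and letting $i\to\infty$ forces $a_j=b_j$ for every $j$, contradicting $\widetilde{a}\neq\widetilde{b}$. With that repair your argument is complete and has the virtue of being self-contained where the paper defers to an external reference.
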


\begin{proof}
The fact that $\pi_1(A)$ is weak mixing follows from Lemma 4.8 in
\cite{Blanchard2008}. The latter follows from Lemma \ref{5.1} and Lemma \ref{lem3.1}.
\end{proof}

The following theorem shows that Theorem \ref{4.6} also holds for non-invertible TDS.

\begin{theorem}
Let $(X,T)$ be a non-invertible TDS and $\mu\in \mathcal{M}^e(X,T)$
with $h_{\mu}(T)>0$. Then for $\mu$-a.e. $x\in X$, there exists a
closed subset $E(x)\subseteq \overline{W^s(x,T)}$ such that
$P(T,f,E(x))\geq P_{\mu}(T,f)$ and $E(x)\in WM_s(X,T)$.
\end{theorem}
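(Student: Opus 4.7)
The plan is to reduce to the invertible Theorem \ref{4.6} by passing to the natural inverse limit $(\widetilde{X},\widetilde{T})$ of $(X,T)$, applying the invertible result there, and projecting the resulting weakly mixing subset back to $X$ via $\pi_1$. The three infrastructure results needed for this reduction are already in hand: Lemma \ref{5.1} gives that topological pressure is preserved under $\pi_1$, Lemma \ref{5.4} gives the same statement with the added information that the weakly mixing property projects well, and the fact that the projection of the stable set in $\widetilde{X}$ lies in the closure of the stable set in $X$ is a short metric computation.

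Concretely, first I would lift $\mu$ to an ergodic invariant measure $\widetilde{\mu}$ on $(\widetilde{X},\widetilde{T})$ with $\pi_1\widetilde{\mu}=\mu$; this is the standard inverse-limit correspondence, under which entropy is preserved, so $h_{\widetilde{\mu}}(\widetilde{T})=h_{\mu}(T)>0$ and $\int_{\widetilde{X}}f\circ\pi_1\,d\widetilde{\mu}=\int_X f\,d\mu$, and hence $P_{\widetilde{\mu}}(\widetilde{T},f\circ\pi_1)=P_{\mu}(T,f)$. Applying Theorem \ref{4.6} to the invertible system $(\widetilde{X},\widetilde{T})$ with the observable $f\circ\pi_1\in C(\widetilde{X},\mathbb{R})$, I obtain a Borel set $\widetilde{X}_0\subseteq\widetilde{X}$ with $\widetilde{\mu}(\widetilde{X}_0)=1$ such that for every $\widetilde{x}\in\widetilde{X}_0$ there is a closed set $\widetilde{E}(\widetilde{x})\subseteq \overline{W^s(\widetilde{x},\widetilde{T})}\cap\overline{W^u(\widetilde{x},\widetilde{T})}$ satisfying $\widetilde{E}(\widetilde{x})\in WM_s(\widetilde{X},\widetilde{T})$ and $P(\widetilde{T},f\circ\pi_1,\widetilde{E}(\widetilde{x}))\geq P_{\widetilde{\mu}}(\widetilde{T},f\circ\pi_1)$.

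Next I would push the construction down by setting $E(x):=\pi_1(\widetilde{E}(\widetilde{x}))$ where $x=\pi_1(\widetilde{x})$. Because $\pi_1$ is continuous and $\widetilde{E}(\widetilde{x})$ is compact, $E(x)$ is closed. The containment $E(x)\subseteq \overline{W^s(x,T)}$ follows from continuity of $\pi_1$ together with the pointwise statement $\pi_1(W^s(\widetilde{x},\widetilde{T}))\subseteq W^s(x,T)$: if $d_T(\widetilde{T}^n\widetilde{x},\widetilde{T}^n\widetilde{y})\to 0$, then the definition of $d_T$ forces $d(T^n\pi_1\widetilde{x},T^n\pi_1\widetilde{y})\to 0$. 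Lemma \ref{5.4} then provides simultaneously that $E(x)\in WM_s(X,T)$ and $P(T,f,E(x))=P(\widetilde{T},f\circ\pi_1,\widetilde{E}(\widetilde{x}))$, whence chaining the inequalities gives $P(T,f,E(x))\geq P_{\widetilde{\mu}}(\widetilde{T},f\circ\pi_1)=P_{\mu}(T,f)$. Setting $X_0:=\pi_1(\widetilde{X}_0)$, the push-forward identity $\mu=\pi_1\widetilde{\mu}$ gives $\mu(X_0)=1$, delivering the conclusion at $\mu$-a.e. $x\in X$.

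The step I expect to require the most care is the final measurability bookkeeping: ensuring that the set of $x$ where the construction succeeds is genuinely measurable with full $\mu$-measure, rather than merely an image of a full-measure set (which a priori need only be analytic). The standard fix is to work with the completion $\mathcal{B}_{\mu}$ and to choose a measurable section or simply replace $X_0$ by a Borel subset of full $\mu$-measure contained in it; since $(\widetilde{X},\widetilde{T})\to(X,T)$ is a factor map of compact metric systems, this is routine but worth stating carefully. Beyond this, the argument is essentially a bootstrap from the invertible theorem, with Lemmas \ref{5.1} and \ref{5.4} absorbing all the non-trivial work in comparing pressures across $\pi_1$.
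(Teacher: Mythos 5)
Your proposal is essentially the paper's own proof: lift $\mu$ to an ergodic $\widetilde{\mu}$ on the inverse limit $(\widetilde{X},\widetilde{T})$, apply Theorem \ref{4.6} there with observable $f\circ\pi_1$, set $E(x)=\pi_1(\widetilde{E}(\widetilde{x}))$, and invoke Lemma \ref{5.4} to transfer both the weak mixing property and the pressure bound through $\pi_1$. The only cosmetic differences are that the paper writes $P_{\widetilde{\mu}}(\widetilde{T},f\circ\pi_1)\geq P_{\mu}(T,f)$ rather than your equality (the inequality suffices), and it passes over the measurability bookkeeping you flag; the substance is the same.
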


\begin{proof}
Let $(\widetilde{X},\widetilde{T})$ be the inverse limit of $(X,T)$.
Then there exists $\widetilde{\mu}\in
\mathcal{M}^e(\widetilde{X},\widetilde{T})$ with
$\pi_1(\widetilde{\mu})=\mu$, where $\pi_1$ is the projection to the
1\textsuperscript{th} coordinate. Obviously,
\begin{equation*}
P_{\widetilde{\mu}}(\widetilde{T},f\circ\pi_1)
=h_{\widetilde{\mu}}(\widetilde{T})+\int_{\widetilde{X}}f\circ\pi_1d\widetilde{\mu}
\geq h_{\mu}(T)+\int_X f d\mu =P(T,f).
\end{equation*}
By Theorem 4.6, there exists a Borel set $\widetilde{X_0}\subseteq
\widetilde{X}$ with $\widetilde{\mu}(\widetilde{X_0})=1$ such that
for each $\widetilde{x}\in \widetilde{X_0}$, there exists  a closed
subset $E(\widetilde{x})\subseteq
\overline{W^s(\widetilde{x},\widetilde{T})}$ such that
\begin{equation*}
P(\widetilde{T},f\circ\pi_1, E(\widetilde{x}))\geq
P_{\widetilde{\mu}}(\widetilde{T},f\circ\pi_1) \quad\text{and}\,\,
E(\widetilde{x})\in WM_s(\widetilde{X},\widetilde{T}).
\end{equation*}

Let $(X_0)=\pi_1(\widetilde{X_0})$. Then $X_0\in \mathcal{B}_{\mu}$
and $\mu(X_0)=1$. For each $x\in X_0$ let
$E(x)=\pi_1(E(\widetilde{x}))$, where $x=\pi_1(\widetilde{x})$. Then
$E(x)\subseteq
\pi_1(\overline{W^s(\widetilde{x},\widetilde{T})})\subseteq
\overline{W^s(x,T)}$. By Lemma \ref{5.4}, we have
\begin{equation*}
P(T,f,E(x))=P(\widetilde{T},f\circ,E(\widetilde{x}))\geq
P_{\widetilde{\mu}}(\widetilde{T},f\circ \pi_1)\geq P_{\mu}(T,f)
\end{equation*}
and $E(x)\in WM_s(X,T)$.
\end{proof}

The following result is immediate.

\begin{corollary}
Let $(X,T)$ be a non-invertible TDS. Then
\begin{enumerate}
\item $\sup_{ x\in X}P(T,f,\overline{W^s(x,T)})=P(T,f)$;
\item If there exists $\mu\in \mathcal{M}^e(X,T)$ with $P_{\mu}(T,f)=P(T,f)$,
then for $\mu$-a.e. $x\in X$, there exists a closed subsets
$E(x)\subseteq \overline{W^s(x,T)}$ such that $E(x)\in WM_s(X,T)$
and $P(T,f,E(x))=P(T,f)$.
\end{enumerate}

\end{corollary}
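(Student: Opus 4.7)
The plan is to derive both parts as direct consequences of the preceding Theorem together with the variational principle $P(T,f)=\sup_{\mu\in\mathcal{M}^e(X,T)}P_\mu(T,f)$, which holds by the standard argument for non-invertible TDS. Since $\overline{W^s(x,T)}$ is a closed subset of $X$, the monotonicity of $P(T,f,\cdot)$ in the subset variable (which follows from the definition via $(n,\epsilon)$-separated sets) gives the easy bound $P(T,f,\overline{W^s(x,T)})\leq P(T,f)$ for every $x\in X$. Both parts then reduce to producing enough points $x$ on which the reverse inequality is nearly attained.

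For part (1), I would fix $\eta>0$ and use the variational principle to choose $\mu\in\mathcal{M}^e(X,T)$ with $P_\mu(T,f)>P(T,f)-\eta$. If $h_\mu(T)>0$, the preceding Theorem supplies, for $\mu$-a.e.\ $x$, a closed $E(x)\subseteq\overline{W^s(x,T)}$ with $P(T,f,E(x))\geq P_\mu(T,f)$; since $E(x)\subseteq\overline{W^s(x,T)}$, monotonicity yields $P(T,f,\overline{W^s(x,T)})\geq P(T,f)-\eta$, and letting $\eta\to 0$ gives (1). In the residual zero-entropy situation (i.e., if the sup on ergodic measures is approached only by measures of zero entropy), I would fall back on a generic-point argument: for any $\mu$-generic $x$ the singleton $\{x\}\subseteq W^s(x,T)$ is trivially $(n,\delta)$-separated and $\tfrac{1}{n}f_n(x)\to\int f\,d\mu$ by the Birkhoff ergodic theorem, so $P(T,f,\overline{W^s(x,T)})\geq P(T,f,\{x\})\geq\int f\,d\mu=P_\mu(T,f)$, which closes the remaining case.

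For part (2), the hypothesis $P_\mu(T,f)=P(T,f)$ combined with $h_\mu(T)>0$ (the nontrivial case; if $h_\mu(T)=0$ then $\int f\,d\mu=P(T,f)$ and the same generic-point argument produces a point $x$ with the orbit of $x$ dense in $\mathrm{supp}(\mu)$, which I would use to construct $E(x)$, though the weakly-mixing requirement then forces a trivial or degenerate situation) lets me apply the preceding Theorem directly: for $\mu$-a.e.\ $x$ there is a closed $E(x)\subseteq\overline{W^s(x,T)}$ with $E(x)\in WM_s(X,T)$ and $P(T,f,E(x))\geq P_\mu(T,f)=P(T,f)$. Combining with the trivial reverse inequality $P(T,f,E(x))\leq P(T,f)$ yields $P(T,f,E(x))=P(T,f)$, as required. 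The main (and only nontrivial) obstacle is the zero-entropy degeneracy flagged above; since the preceding Theorem explicitly requires $h_\mu(T)>0$, the cleanest writeup is to observe that $P_\mu(T,f)=P(T,f)>0$ or, more generally, to isolate the positive-entropy case as the one in which the full weakly-mixing conclusion is substantive, and to treat the zero-entropy case by the Birkhoff-generic-point reduction above.
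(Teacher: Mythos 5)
Your approach---monotonicity of $P(T,f,\cdot)$ in the set, the variational principle for non-invertible TDS, and the preceding Theorem---is the natural route and is surely what the paper means when it says ``the following result is immediate.'' Two points are worth noting, and both reflect well on the care in your write-up.

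For part (1), the Birkhoff-generic-point fallback you add is genuinely necessary, not just cosmetic: the preceding Theorem only applies to ergodic $\mu$ with $h_\mu(T)>0$, and in general the variational-principle supremum may be approached only by zero-entropy measures. Your observation that $\{x\}\subseteq W^s(x,T)$ and that for a $\mu$-generic $x$ one has $P\bigl(T,f,\{x\}\bigr)=\lim_n\frac{1}{n}f_n(x)=\int f\,d\mu=P_\mu(T,f)$ when $h_\mu(T)=0$ closes this case cleanly, and the paper glosses over it entirely.

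For part (2), you have correctly identified a real gap rather than a cosmetic one. A weakly mixing set must contain a Cantor set and hence is uncountable, so if $h_\mu(T)=0$ and $\overline{W^s(x,T)}$ is a singleton for a.e.\ $x$, there is simply no candidate $E(x)$; the invertible analogue of this corollary already fails for an irrational circle rotation with $f\equiv 0$, where the premise $P_\mu(T,f)=P(T,f)$ holds but no weakly mixing subset of $\overline{W^s(x,T)}\cap\overline{W^u(x,T)}=\{x\}$ exists. Your tentative patch via ``$P_\mu(T,f)>0$'' does not repair this, since positive pressure can come entirely from $\int f\,d\mu$ and does not force positive entropy. The correct reading is that part (2) carries the implicit hypothesis $h_\mu(T)>0$, consistent with the positive-entropy setting of the whole paper; with that hypothesis made explicit, your derivation from the preceding Theorem together with the trivial upper bound $P(T,f,E(x))\le P(T,f)$ is complete and correct.
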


\begin{remark}
In fact, from the proof of Theorem \ref{4.6}, we know
$E(x)=supp(\mu_x)$, where $\mu_x$ is a probability measure
determined by the disintegration of $\mu\in \mathcal{M}^e(X,T)$ over
the Pinsker $\sigma$-algebra $P_{\mu}(T)$.

\end{remark}

%\medskip
%%% The data information below will be filled by AIMS editorial staff
%Received September 2006; revised February 2007.
%
%\medskip

\end{document}